\documentclass[10pt]{amsart}
\usepackage[margin=1.2in,marginparsep=0.1in,marginparwidth=1in]{geometry}
\usepackage{amssymb,amsmath,amsthm,amstext,amscd,latexsym,graphics,graphicx,bbm,caption}
\usepackage[dvipsnames,svgnames,table]{xcolor}
\usepackage[plainpages=false,colorlinks=true, pagebackref]{hyperref}
\usepackage{tikz}
\usepackage{mathtools}
\usepackage{tikz-cd}

\hypersetup{citecolor=Sepia,linkcolor=blue, urlcolor=blue}

\usepackage{cite}  

\makeatletter
\def\@tocline#1#2#3#4#5#6#7{\relax
	\ifnum #1>\c@tocdepth 
	\else
	\par \addpenalty\@secpenalty\addvspace{#2}%
	\begingroup \hyphenpenalty\@M
	\@ifempty{#4}{%
		\@tempdima\csname r@tocindent\number#1\endcsname\relax
	}{%
		\@tempdima#4\relax
	}%
	\parindent\z@ \leftskip#3\relax \advance\leftskip\@tempdima\relax
	\rightskip\@pnumwidth plus4em \parfillskip-\@pnumwidth
	#5\leavevmode\hskip-\@tempdima
	\ifcase #1
	\or\or \hskip 2em \or \hskip 2em \else \hskip 3em \fi%
	#6\nobreak\relax
	\dotfill\hbox to\@pnumwidth{\@tocpagenum{#7}}\par
	\nobreak
	\endgroup
	\fi}
\makeatother
\newtheorem{intro-thm}{Theorem}[]
\theoremstyle{plain}
\newtheorem{thm}{Theorem}[section]
\newtheorem{theorem}[thm]{Theorem}
\newtheorem{question}[thm]{Question}
\newtheorem{lemma}[thm]{Lemma}
\newtheorem{corollary}[thm]{Corollary}
\newtheorem{proposition}[thm]{Proposition}

\theoremstyle{definition}
\newtheorem{remark}[thm]{Remark}

\newtheorem{definition}[thm]{Definition}

\newcommand{\Gal}{{\rm Gal}}

\renewcommand{\P}{{\mathbb P}}

\newcommand{\Z}{{\mathbb Z}}

\newcommand{\Br}{\text{Br}}

\input{xy}

\begin{document}
	\title{Zero cycles on Severi--Brauer flag varieties}

    \author[D. C-Ramachandran]{Divyasree C-Ramachandran} \address{Department of Mathematics, IISER Pune, Dr Homi Bhabha Rd, Pashan, Pune, 411008, India}\email{crdivya99@gmail.com}
    \author[A. Hogadi]{Amit Hogadi} \address{Department of Mathematics, IISER Pune, Dr Homi Bhabha Rd, Pashan, Pune, 411008, India}\email{amit@iiserpune.ac.in}
	
	\thanks{D. C-Ramachandran was partially supported by the DST-INSPIRE Fellowship (Reg.No. IF210208).}

	\begin{abstract}
		Let \(A\) be a central simple algebra over a field \(F\) with index \(n\) and let \(\mathrm{SB}_r(A)\) denote the \(r\)-th generalized Severi--Brauer variety associated with \(A\). We prove that the Chow group of zero cycles of degree zero \(\mathrm{A_0}(\mathrm{SB}_r(A))\) is \((d, n/d)\)-torsion where \(d = (r,n)\). Our approach reduces the general case to division algebras of prime power index and yields several new instances in which \(\mathrm{A_0}\) is trivial, together with sharper torsion bounds in general.\\ We also show that if \(F\) is a local or global field, then \(\mathrm{A_0}(\mathrm{SB}_r(A))=0\). Since Severi--Brauer flag varieties are stably birational to generalized Severi--Brauer varieties, these results extend to them, yielding corresponding torsion bounds and vanishing results for \(\mathrm{A_0}(X)\), where \(X\) is stably birational to \(\mathrm{SB}_r(A)\).
	\end{abstract}
	
	\maketitle
\section{Introduction}
Let \(X\) be a proper variety over a field \(F\), and let \(\mathrm{CH_0}(X)\) denote the Chow group of zero-cycles on \(X\). Let \(\mathrm{A_0}(X)\) denote the kernel of the degree map \(\deg : \mathrm{CH_0}(X) \rightarrow \Z \). Panin proved in his thesis that \(\mathrm{A_0}(\mathrm{SB}(A))=0\) for the Severi--Brauer variety \(\mathrm{SB}(A)\) associated with a central simple algebra \(A\), provided the characteristic of the base field does not divide the index of \(A\). Severi--Brauer varieties are twisted forms of projective space and constitute an important class of projective homogeneous varieties. A natural generalization is given by the class of generalized Severi--Brauer varieties, which are twisted forms of Grassmannians (see \cite{blanchet} for further reading). More generally, one can consider Severi--Brauer flag varieties (see Definition~\ref{flag}), which are twisted forms of flag varieties. Given a central simple algebra \(A\) of degree \(m\) over a field \(F\), and integers \(0 < n_1 < \cdots < n_k < m\), one associates a Severi--Brauer flag variety of type \((n_1,\ldots,n_k)\), denoted \(\mathrm{SB}_{n_1,\ldots,n_k}(A)\).\\
Let \(d = g.c.d(n_1,n_2,\ldots,n_k,\operatorname{ind}(A))\). In \cite{zerocycle}, Krashen showed that the group \(\mathrm{A_0}\bigl(\mathrm{SB}_{n_1,\ldots,n_k}(A)\bigr)\) is trivial when \(d=1\), under the assumption that the base field \(F\) is perfect or that \(char (F)\nmid \operatorname{ind}(A)\). In particular, this recovers Panin’s earlier result for Severi--Brauer varieties. Krashen's proof, based on étale algebra techniques and symmetric power constructions, also yields the triviality of \(\mathrm{A_0}(\mathrm{SB}_{n_1,\ldots,n_k}(A))\) in the case \(d=2\), when the exponent of \(A\) divides \(2\) or the index of \(A\) divides 4. Subsequently, in \cite{MC}, Merkurjev and Chernousov removed the restrictions on the base field and proved the triviality of \(\mathrm{A_0}\) for a large class of projective homogeneous varieties, including Severi–Brauer varieties and Severi--Brauer flag varieties in the case \(d=2\) where the exponent of \(A\) is not divisible by \(4\), using the theory of connectedness of classes of fields. This motivates the following question.
\begin{question}\label{conjecture}
    Is the Chow group of zero cycles of degree zero of a Severi--Brauer flag variety trivial?
\end{question}
We have an affirmative answer for local and global fields.
\begin{theorem}\label{vanishing over local and global}
Let \(F\) be a local or global field, and let \(X = \mathrm{SB}_{n_1,n_2,\dots,n_k}(A)\) be a Severi–Brauer flag variety over \(F\). Then \(\mathrm{A_0}(X)=0\).
\end{theorem}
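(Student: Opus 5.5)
The plan is to reduce Theorem~\ref{vanishing over local and global} to generalized Severi--Brauer varieties of division algebras of prime power index. I would then exploit the special arithmetic of local and global fields: every central simple algebra is cyclic, and index equals exponent.

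\emph{Step 1 (reductions).} Since \(\mathrm{SB}_{n_1,\dots,n_k}(A)\) is stably birational to \(\mathrm{SB}_d(A)\) with \(d=\gcd(n_1,\dots,n_k,\operatorname{ind}A)\), and \(\mathrm{A_0}\) is a stable birational invariant of smooth proper varieties, it suffices to treat \(X=\mathrm{SB}_d(A)\). Using the reduction announced in the abstract, I would pass to the primary decomposition \(D=\bigotimes_p D_p\) of the underlying division algebra. This reduces the problem to \(X=\mathrm{SB}_{p^k}(D)\) with \(D\) a division algebra of index \(p^m\) and \(0<k<m\). For such \(X\), the torsion bound gives that \(\mathrm{A_0}(X)\) is \(p^{\min(k,m-k)}\)-torsion.

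\emph{Step 2 (arithmetic input).} Over a local or global field, \(D\) is cyclic, \(D=(L/F,\sigma,a)\) with \(L/F\) cyclic of degree \(p^m\). Moreover, the index of \(D_E\) for a finite extension \(E/F\) is computed by local invariants: \(\operatorname{ind}(D_E)=\operatorname{lcm}_w\) of the orders of \([E_w:F_v]\operatorname{inv}_v(D)\). A closed point of \(X\) with residue field \(E\) exists exactly when \(\operatorname{ind}(D_E)\mid p^k\). Let \(K\subset L\) be the unique subfield of degree \(p^{m-k}\). Then \(K\) gives a closed point \(x_0\) of degree \(p^{m-k}\), and \(D_K\) has index exactly \(p^k\).

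The goal is to show that every closed point \(x\), with residue field \(E\), satisfies \(p^{m-k}[x]=\deg(x)[x_0]\). Since \(\gcd\) of degrees equals \(p^{m-k}\) and \(\mathrm{A_0}\) is \(p\)-primary torsion, this would give \(\mathrm{A_0}(X)=0\).

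\emph{Step 3 (comparison of points).} For a closed point \(x\) with residue field \(E\), I would look at the compositum \(EK\). Over \(EK\), both points become available, and \(X_{EK}\cong\mathrm{SB}_{p^k}(M_s(D'))\) with \(\operatorname{ind}(D')\mid p^k\). The natural route is to push forward along \(EK/E\) and \(EK/K\) and compare \(N_{EK/F}\) of suitable rational points. The key sub-claim is: if \(\operatorname{ind}(A)\) equals \(r\) (the case \(d=\operatorname{ind}\)), then any two rational points of \(\mathrm{SB}_r(A)\) are rationally equivalent. I would prove this by noting that \(\mathrm{SB}_r(A)\) with \(r=\operatorname{ind}(A)\) contains, through any two rational points, a subvariety isomorphic to a Grassmannian-like homogeneous variety joining them: the intersection data of two right ideals of reduced dimension \(r\) give a rational curve or a rational subvariety. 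Transferring this equality down to \(F\) via norms yields the required relation.

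I expect the main obstacle to be exactly this step. The compositum \(EK\) may be large, and its degree need not be compatible with \(p^{m-k}\cdot\deg x\). I would first try to choose \(K\) adapted to \(x\), using the freedom in local invariants (Grunwald--Wang) to pick a cyclic splitting field \(L\) whose degree-\(p^{m-k}\) subfield is linearly disjoint from, or contained in, \(E\), so that the degrees match. If that fails, the fallback is the Merkurjev--Chernousov connectedness approach: show that over local and global fields, the class of fields \(E\) with \(\operatorname{ind}(D_E)\mid p^k\) is connected, using that indices are detected locally.
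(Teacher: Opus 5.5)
Your reductions in Step~1 match the paper's, but the core of the argument, Step~3, cannot work.

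\textbf{The target relation is automatic.} The relation you aim for, $p^{m-k}[x]=\deg(x)[x_0]$, holds for every closed point $x$ without any work. Let $E=F(x)$ and let $\pi\colon X_E\to X$ be the projection. Then $X_E$ is rational with a rational point, so $\pi^*[x_0]=p^{m-k}[P]$ in $\mathrm{CH_0}(X_E)$, where $P$ is the tautological $E$-point over $x$. Applying $\pi_*$ gives $\deg(x)[x_0]=p^{m-k}[x]$. This relation is equivalent to $\mathrm{A_0}(X)$ being killed by $\mathrm{ind}(X)=p^{m-k}$, which you already know. Your inference ``since the gcd of degrees is $p^{m-k}$ and $\mathrm{A_0}$ is $p$-primary, this gives $\mathrm{A_0}(X)=0$'' is a non sequitur: for a degree-zero cycle $\sum n_x[x]$ it only shows $p^{m-k}\sum n_x[x]=0$.

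\textbf{The norm strategy has the same defect.} Pushing forward an equality of rational points on $X_{EK}$ gives $[EK:E]\,[x]=[EK:K]\,[x_0]$, a relation multiplied by field degrees. Suppose you arrange $K\subseteq E$, so that you get $[x]=[E:K][x_0]$. You still must compare two points $x_0,x_0'$ of minimal degree $p^{m-k}$ with non-isomorphic residue fields $K,K'$. Norms from $KK'$ only give $c[x_0]=c[x_0']$ with $c=[KK':K]>1$. That does not force $[x_0]=[x_0']$, since $\mathrm{A_0}(X)$ is only known to be $p$-primary torsion.

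\textbf{What is missing.} You need a genuine rational equivalence over the ground field between points of minimal degree; that is the entire content of the theorem. Your ``fallback'' names the right framework but supplies none of it. The paper argues as follows.
\begin{itemize}
\item It passes to a $p$-special algebraic extension $F'/F$. By \cite[Lemmas~5.2 and~6.4]{MC}, it then suffices to show that any two fields $L_0,L_1\in\mathcal{A}(Y_{F'})$ of degree $p^{m-k}$ are $Y_{F'}$-equivalent. That means they are linked by a chain of simple equivalences, each realized by an extension $E/F'(t)$ of the same degree in $\mathcal{A}(Y_{F'})$, whose residue fields over $t=0$ and $t=1$ are consecutive members of the chain.
\item This is proved by induction on $[L_0\cap L_1:F']$, using interpolation polynomials $(1-t)g(x)+t\,s(x)$.
\item The arithmetic engine is Lemma~\ref{local-global}. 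Given distinct degree-$p$ extensions $L_0,L_1$ with $\mathrm{ind}(A_{L_i})=p^{m-1}$, it produces a third degree-$p$ extension $K$ with $\mathrm{ind}(A_K)=p^{m-1}$ and $\mathrm{ind}(A_{KL_i})=p^{m-2}$.
\item That lemma rests on two facts: a local field has at least $p+1$ degree-$p$ extensions, and, for global fields, Conrad's theorem realizes prescribed local extensions by a global one of the same degree.
\item Since $F'$ itself is neither local nor global, one must first descend to a finite extension $\tilde F/F$ before applying the lemma.
\end{itemize}
Cyclicity of $D$ and Grunwald--Wang-type choices of a splitting field do not substitute for this construction.
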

This result is proved using the theory of equivalence of fields developed in \cite{MC}. The key property of local and global fields used in the argument is formulated in Lemma~\ref{local-global}. To address Question~\ref{conjecture} in general, we study torsion bounds for \(\mathrm{A_0}\) of Severi--Brauer flag varieties and prove the following theorem.
\begin{theorem}\label{main theorem}
 Let \(A\) be a central simple algebra of index \(n\) over a field \(F\). Let \(\{n_1,n_2,\dots,n_k\}\) be integers satisfying \(0<n_1 < n_2 < \dots < n_k < \text{deg}(A)\). Consider the associated Severi--Brauer flag variety \(X = \mathrm{SB}_{n_1,n_2,\dots,n_k}(A)\). Let \(d=gcd(n_1,n_2,\dots,n_k,n)\). Then \(\mathrm{A_0}(X)\) is \((d,n/d)\)-torsion.
\end{theorem}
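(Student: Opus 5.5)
Note first that \((d,n/d)\) denotes the gcd of \(d\) and \(n/d\), and that it suffices to show \(\mathrm{A_0}(X)\) is killed by \(d\) and also by \(n/d\). The first step is to reduce to a generalized Severi--Brauer variety. As recalled in the abstract, \(X=\mathrm{SB}_{n_1,\dots,n_k}(A)\) is stably birational to \(\mathrm{SB}_r(A)\) with \(r=\gcd(n_1,\dots,n_k,n)=d\). Moreover \(\mathrm{A_0}\) is a stable birational invariant of smooth projective varieties (via Fulton's correspondences, or the Merkurjev--Chernousov formalism). So we may assume \(X=\mathrm{SB}_d(A)\). We may also replace \(A\) by the underlying division algebra \(D\) of index \(n\), since \(\mathrm{SB}_d(A)\) and \(\mathrm{SB}_d(D)\) are stably birational when \(d\mid n\).

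The second step reduces to a prime-power index. Write \(D=\bigotimes_p D_p\) for its primary decomposition. For a prime \(p\), let \(L/F\) be a finite extension of degree prime to \(p\) that splits all \(D_q\) with \(q\neq p\). The composite \(\mathrm{A_0}(X)\to \mathrm{A_0}(X_L)\to \mathrm{A_0}(X)\) of restriction and norm is multiplication by \([L:F]\). Hence the \(p\)-primary part of \(\mathrm{A_0}(X)\) injects into \(\mathrm{A_0}(X_L)\), and \(X_L\) is stably birational to \(\mathrm{SB}_{d_p}(D_p\otimes L)\) with \(d_p=(d,p^{v_p(n)})\). To conclude that \(\mathrm{A_0}(X)\) is torsion, I would use that \(\mathrm{A_0}(X)\) is killed by \(n\), via the splitting field of degree \(n\) over which \(X\) becomes rational. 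It then suffices to prove that for a division algebra of index \(p^a\) and \(d=p^b\) with \(b\le a\), \(\mathrm{A_0}(\mathrm{SB}_{p^b}(D))\) is killed by \(p^{\min(b,a-b)}\). The \(p\)-parts of \((d,n/d)\) are exactly \(p^{\min(b,a-b)}\), so the general statement follows.

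The third step handles the prime-power case. To get killing by \(n/d=p^{a-b}\), take a maximal subfield chain: choose \(K\subset D\) of degree \(p^{a-b}\) such that \(D_K\) has index \(p^b\). I would obtain \(K\) generically, by passing to a field extension that preserves \(\mathrm{A_0}\)-injectivity, for example the function field of \(\mathrm{SB}_{p^{a-b}}(D)\) on which \(\mathrm{A_0}\) behaves well, or by handling \(p\)-special fields separately. Then \(X_K=\mathrm{SB}_{p^b}(D_K)\) has a rational point and is rational, so \(\mathrm{A_0}(X_K)=0\), and the norm argument gives killing by \(p^{a-b}\). To get killing by \(d=p^b\), use a splitting argument of the other type: points of degree \(p^b\) exist on \(X\), and for any two closed points \(x,y\) of degree \(p^b\) one shows \(p^b(x-y)\) vanishes, by moving in \(X_{F(x)}\). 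Over \(F(x)\), the variety is stably birational to \(\mathrm{SB}(D')\) with \(D'\) of index dividing \(n/d\), so Panin/Krashen applies in the \(\gcd=1\) part. Alternatively, use the index reduction by \(\deg x\) together with the duality \(\mathrm{SB}_d(D)\cong \mathrm{SB}_{n-d}(D^{op})\).

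I expect the main obstacle to be this last part: rigorously showing that \(\mathrm{A_0}\) is killed by \(d\) in the prime-power case. The approach I would try first is the standard generation argument, in which \(\mathrm{CH}_0\) is generated by closed points whose residue fields reduce the index of \(D\) to at most \(n/d\), combined with an induction on \(a\). The point \(x\) defines a \(p^b\)-dimensional right ideal, and over \(F(x)\) we have \(\mathrm{ind}(D)\le p^a/\ldots\). The induction would apply the \(n/d\)-bound over \(F(x)\) and push forward, accumulating a factor of \(\deg x\).
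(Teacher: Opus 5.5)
\textbf{Verdict: there is a genuine gap, in the $d$-torsion half.}

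\textbf{What works.} Your Step 1 is the paper's reduction to $\mathrm{SB}_d(D)$. Your Step 2 is valid and a bit cleaner than the paper's induction over the primes dividing $d$: you pass to a prime-to-$p$ extension splitting the other primary components and use restriction/norm.

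For the $n/d$ bound, drop the function-field suggestion, which is not justified, and use your $p$-special option, which does work. Let $F'$ be the fixed field of a Sylow pro-$p$ subgroup of the absolute Galois group. Then $\mathrm{A_0}(X)\{p\}\hookrightarrow \mathrm{A_0}(X_{F'})$. A maximal separable subfield of $D_{F'}$ has Galois closure with $p$-group Galois group, so it contains a subfield $K$ of degree $p^{a-b}$. By the centralizer theorem $\mathrm{ind}(D_K)=p^b$, so $X_K$ is rational. This gives the $n/d$ bound in every characteristic. It avoids the paper's lift to a characteristic-$0$ Henselian ring, since you only need an upper bound on the index.

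For $b=0$ your target is $\mathrm{A_0}(\mathrm{SB}(D))=0$. That has to be quoted from Merkurjev--Chernousov.

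\textbf{The gap.} The substantive part of the theorem is $d$-torsion in the prime-power case, and none of your sketches reaches it.

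\emph{Points of degree $p^b$ need not exist} on $X=\mathrm{SB}_{p^b}(D)$. For a closed point $x$ you have $p^a\mid [F(x):F]\cdot\mathrm{ind}(D_{F(x)})$ and $\mathrm{ind}(D_{F(x)})\mid p^b$, so $p^{a-b}\mid\deg x$. For $a=3$, $b=1$ there is no point of degree $p$.

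\emph{The index reduction is backwards.} Over $F(x)$ the algebra has index dividing $d$, not $n/d$, and $X_{F(x)}$ is rational. So any ``trivialize over $F(x)$ and push forward'' argument, with or without induction on $a$, only shows that $\mathrm{A_0}(X)$ is killed by $\deg x$. That is a multiple of $n/d$: it reproves the index bound and never produces $d$.

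\emph{Duality does not help.} The isomorphism $\mathrm{SB}_d(D)\cong\mathrm{SB}_{n-d}(D^{op})$ gives nothing new, since $\gcd(n-d,n)=d$.

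\textbf{The missing idea} is a generically finite dominant map of degree $d$, from a variety with trivial $\mathrm{A_0}$, onto a variety stably birational to $\mathrm{SB}_d(D)$. The paper does this as follows.
\begin{itemize}
\item It uses $\mathrm{SB}(D)\times\mathrm{Sym}^{d-1}\mathrm{SB}(D)\to\mathrm{Sym}^{d}\mathrm{SB}(D)$, which has degree $d$.
\item It uses the stable birationality $\mathrm{Sym}^k\mathrm{SB}(D)\sim\mathrm{SB}_k(D)$.
\item The key arithmetic fact is that for $d=p^b$ with $b\ge 1$ one has $\gcd(d-1,p^a)=1$, so $\mathrm{SB}_{d-1}(D)$ is stably birational to $\mathrm{SB}(D)$.
\item The source is then stably birational to $\mathrm{SB}(D)\times\mathrm{SB}(D)$, whose $\mathrm{A_0}$ vanishes by Merkurjev--Chernousov.
\end{itemize}
Pulling back and pushing forward along the degree-$d$ map then shows that $\mathrm{A_0}(\mathrm{SB}_{p^b}(D))$ is $p^b$-torsion.
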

This theorem yields several new instances in which Question~\ref{conjecture} admits an affirmative answer. We record some of these consequences in the following corollary and in Proposition~\ref{2-primary part is trivial}.
\begin{corollary}\label{coro}
Let \(A\) be a central simple algebra over \(F\) of index \(n\). If \(n\) is square-free, then \(\mathrm{A_0}(\mathrm{SB}_r(A)) = 0\) for every \(r\).
\end{corollary}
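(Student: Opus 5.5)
The corollary should follow directly from Theorem~\ref{main theorem} applied with $k=1$ and $n_1 = r$, so that $X = \mathrm{SB}_r(A)$. The only real work is a number-theoretic observation about square-free integers.

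Set $d = \gcd(r,n)$. Theorem~\ref{main theorem} says that $\mathrm{A_0}(\mathrm{SB}_r(A))$ is killed by $d$ and also by $n/d$. Hence it is killed by $\gcd(d, n/d)$, because this gcd is a $\mathbb{Z}$-linear combination of $d$ and $n/d$. It therefore suffices to show $\gcd(d,n/d)=1$ whenever $n$ is square-free. Suppose a prime $p$ divides both $d$ and $n/d$. Then $p^2 \mid d\cdot (n/d) = n$, which contradicts square-freeness. So $\gcd(d,n/d)=1$, the group is killed by $1$, and hence $\mathrm{A_0}(\mathrm{SB}_r(A))=0$.

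Two edge cases need a glance. First, if $r \geq \deg(A)$ or $r=0$, then $\mathrm{SB}_r(A)$ is either a point or empty, or the statement is vacuous, so we may assume $0<r<\deg(A)$ as Theorem~\ref{main theorem} requires. Second, we should check that "$(d,n/d)$-torsion" in the theorem means annihilation by the gcd, or equivalently by both $d$ and $n/d$. Either reading gives the conclusion above.

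There is no serious obstacle here; all the depth lies in Theorem~\ref{main theorem}.
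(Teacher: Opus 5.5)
Your proposal is correct and follows the paper's argument: the paper also deduces the corollary from Theorem~\ref{main theorem} by noting that $d=(r,n)$ and $n/d$ are coprime when $n$ is square-free. The paper first passes to the stably birational variety $\mathrm{SB}_d(D)$, but that reduction is already part of the theorem, and your check that a common prime $p$ would force $p^2\mid n$ fills in the step the paper leaves implicit.
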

If \(X\) is a geometrically rational variety over a field \(F\),  then \(\mathrm{A_0}(X)\) is a torsion group. In particular, \(\mathrm{A_0}(X)\) is annihilated by the index of \(X\). In Theorem~\ref{index of generalized SB}, we generalize \cite[Lemma 7.1]{zerocycle} by removing the assumption on characteristic of the base field and show that the index of \(\mathrm{SB}_{n_1,\ldots,n_k}(A)\) is \(n/d\), where \(n = \mathrm{ind}(A)\) and \(d = (n_1,n_2,\dots,n_k,n)\). This immediately yields one part of our Theorem~\ref{main theorem}.\\
The remaining assertion, namely that \(\mathrm{A_0}(\mathrm{SB}_{n_1,\ldots,n_k}(A))\) is \(d\)-torsion, is proved in Section~\ref{proof of main thm}.
A key step in our approach is showing that, in order to understand \(\mathrm{A_0}(\mathrm{SB}_r(A))\) for a general central simple algebra \(A\), it suffices to study the groups \(\mathrm{A_0}(\mathrm{SB}_{p^a}(D))\), where \(D\) is a central division algebra of prime power degree.

\subsection{Overview}
We begin by fixing notation and recalling background on Severi--Brauer flag varieties in Section~\ref{prelims}. In Section~\ref{index of SB flag variety}, we show that 
\(ind\big(\mathrm{SB}_{n_1,\ldots,n_k}(A)\big)=n/d\), where \(n=\mathrm{ind}(A)\) and \(d=\gcd(n_1,\ldots,n_k,n)\) (Theorem~\ref{index of generalized SB}), extending \cite[Lemma~7.1]{zerocycle} to arbitrary characteristic. Section~\ref{proof of main thm} contains the proofs of Theorem~\ref{main theorem} and Corollary~\ref{coro}, and also includes Proposition~\ref{2-primary part is trivial}, which gives a vanishing result for certain generalized Severi--Brauer varieties of index divisible by \(2\) but not by \(8\). In Section~\ref{Refined torsion bounds via products}, we develop an inductive approach toward Question~\ref{conjecture} (see Corollary~\ref{reduction in torsion}) and apply it to show that \(\mathrm{A_0}(\mathrm{SB}_r(A))\) is \(2\)-torsion whenever \((r,\mathrm{ind}(A))=4\) and \(4 \nmid \exp(A)\) (Corollary~\ref{2-torsion}). In Section~\ref{Index reduction over local and global fields}, we recall basic properties of Brauer groups over local and global fields and prove a key index-reduction result (Lemma~\ref{local-global}) used in the proof of Theorem~\ref{vanishing over local and global}. Finally, in Section~\ref{proof of Vanishing theorem}, we establish the required equivalence of fields over local and global fields by induction on the degree of the intersection of the extensions (Lemma~\ref{p-special connectedness-global}) and complete the proof of Theorem~\ref{vanishing over local and global}.

\section{Preliminaries on Severi--Brauer flag varieties}\label{prelims}
Let \(F\) be a field. A variety \(X\) over \(F\) is an integral scheme of finite type over \(F\); \(F(X)\) denotes the function field of \(X\).  If \(X\) is an \(F\)-variety and \(L\) is an extension of \(F\), then \(X_L:= X \times_{Spec \, F} Spec \, L\). Two varieties \(X,Y\) over \(F\) are said to be stably birational, if there exist non-negative integers \(m,l\) such that \(X \times \P^m\) is birational to \(Y \times \P^l\). \\
Recall that for any central simple algebra over a field, there exists a unique (upto isomorphism) central division algebra over the same field that is Brauer-equivalent to it, and the index of the algebra is the degree of this division algebra. Moreover, every central division algebra admits a primary decomposition.
\begin{proposition}[{\cite[Proposition 4.5.16]{Gille_Szamuely_2006}}]\label{primary decomposition}
  Let \(D\) be a central division algebra over \(F\) whose index admits the prime power
decomposition \(n = p_1^{a_1} p_2^{a_2} \cdots p_k^{a_k}.\) Then there exist central division algebras \(D_i\) for \(1 \leq i \leq k\) such that \(D \cong D_1 \otimes_F D_2 \otimes_F \dots \otimes_F D_k\) with \(\mathrm{ind}(D_i)=p_i^{a_i}\) for each \(i\).  
\end{proposition}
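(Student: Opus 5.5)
The statement to prove is the primary decomposition of a central division algebra: if \(D\) has index \(n = p_1^{a_1}\cdots p_k^{a_k}\), then \(D \cong D_1\otimes_F\cdots\otimes_F D_k\) with each \(D_i\) a central division algebra of index \(p_i^{a_i}\). The plan has two stages. First I produce candidate algebras \(D_i\) from the primary decomposition of the Brauer group. Then I verify that their tensor product is a division algebra of the correct degree, which pins it down as \(D\).

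For the first stage, recall that \(\mathrm{Br}(F)\) is a torsion abelian group and that \(\exp(D)\) divides \(\operatorname{ind}(D)\). So the class \([D]\) lies in the subgroup of \(\mathrm{Br}(F)\) killed by \(n\). This subgroup splits as a direct sum of its \(p_i\)-primary parts. Concretely, choose integers \(e_i\) with \(e_i \equiv 1 \bmod p_i^{a_i}\) and \(e_i \equiv 0 \bmod p_j^{a_j}\) for \(j\neq i\). Setting \(\alpha_i = e_i[D]\) gives \([D]=\sum_i \alpha_i\), where each \(\alpha_i\) is \(p_i\)-primary. Let \(D_i\) be the unique central division algebra in the class \(\alpha_i\).

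For the second stage I need to control the indices of the \(D_i\).

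1. The index of a class in \(\mathrm{Br}(F)\) has the same prime divisors as its exponent. Since \(\alpha_i\) is \(p_i\)-primary, \(\operatorname{ind}(D_i)\) is a power of \(p_i\).
2. The index of a multiple satisfies \(\operatorname{ind}(m\beta)\mid \operatorname{ind}(\beta)\). This holds because any splitting field of \(\beta\) also splits \(m\beta\). Applying it to \(\alpha_i = e_i[D]\) shows \(\operatorname{ind}(D_i)\mid n\), and hence \(\operatorname{ind}(D_i)\mid p_i^{a_i}\).
3. Conversely, \(\operatorname{ind}\) is submultiplicative on tensor products, so \(n=\operatorname{ind}(D)\le \prod_i \operatorname{ind}(D_i)\). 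Combined with step 2, this forces \(\operatorname{ind}(D_i)=p_i^{a_i}\).
4. The algebra \(B=D_1\otimes\cdots\otimes D_k\) is Brauer equivalent to \(D\) and has degree \(\prod_i p_i^{a_i}=n=\deg D\). Two Brauer-equivalent central simple algebras of the same degree are isomorphic, by Wedderburn's theorem. Therefore \(B\cong D\).

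The main obstacle is the fact used in step 1: that \(\exp\) and \(\operatorname{ind}\) share prime factors. I would prove it with a Sylow argument. Let \(p\) be a prime dividing \(\operatorname{ind}(\beta)\), and pick a separable splitting field \(L\) of degree \(\operatorname{ind}(\beta)\). Let \(K\) be the fixed field of a \(p\)-Sylow subgroup of the Galois closure of \(L\). Then \([K:F]\) is prime to \(p\), so by restriction–corestriction \(\operatorname{ind}(\beta_K)\) still has the full \(p\)-part of \(\operatorname{ind}(\beta)\). On the other hand, \(\beta_K\) is split by an extension of \(p\)-power degree, so its index is a \(p\)-power. If \(p\nmid\exp(\beta)\), then \(\beta_K\) would have order prime to \(p\) while its index is a \(p\)-power, which forces \(\beta_K=0\). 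That contradicts \(p\mid\operatorname{ind}(\beta_K)\).
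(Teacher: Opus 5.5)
Your proof is correct. The paper gives no proof of this statement; it only cites Gille--Szamuely. The standard argument there shares your first stage and your use of Brauer's theorem that period and index have the same prime factors, but it finishes differently. It invokes the lemma that a tensor product of central division algebras of coprime indices is again a division algebra. Hence \(D_1\otimes_F\cdots\otimes_F D_k\) is a division algebra Brauer-equivalent to \(D\), so it is isomorphic to \(D\) by Wedderburn, and \(\mathrm{ind}(D_i)=p_i^{a_i}\) follows by comparing degrees.

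You instead bound \(\mathrm{ind}(D_i)\) from both sides. The upper bound \(\mathrm{ind}(D_i)\mid p_i^{a_i}\) comes from \([D_i]=e_i[D]\) being a multiple of \([D]\), and the lower bound from \(\mathrm{ind}(D)\le\prod_i\mathrm{ind}(D_i)\). This replaces the coprime-index lemma and uses nothing beyond the fact that the index divides the degree of every finite splitting field. Nothing is lost, because the same trick recovers that lemma. For coprime indices, \([A]\) is a multiple of \([A\otimes_F B]\), which gives \(\mathrm{ind}(A)\mid\mathrm{ind}(A\otimes_F B)\) and hence \(\mathrm{ind}(A\otimes_F B)=\mathrm{ind}(A)\,\mathrm{ind}(B)\). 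The paper tacitly uses this in Proposition~\ref{coprime degrees}.

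One small labeling point concerns your Sylow argument. The fact you need there, that \(\mathrm{ind}(\beta_K)\) keeps the full \(p\)-part of \(\mathrm{ind}(\beta)\), is the divisibility \(\mathrm{ind}(\beta)\mid[K:F]\,\mathrm{ind}(\beta_K)\). That is a splitting-field argument, not restriction--corestriction, which controls the period rather than the index.
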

We refer the reader to \cite{Gille_Szamuely_2006} for further background on the theory of central simple algebras and Severi–Brauer varieties. Next, we recall the definition of a Severi--Brauer flag variety.

\begin{definition}[{\cite[Definition 4.1]{zerocycle}}]\label{flag}
Let \(A\) be a central simple algebra of degree \(n\) over a field \(F\), and let
\(0 < n_1 < \cdots < n_k < n\) be integers.
The Severi--Brauer flag variety of type \((n_1,\ldots,n_k)\), denoted by
\(\mathrm{SB}_{n_1,\ldots,n_k}(A)\), is the variety whose points correspond to flags of
right ideals \(I_{n_1} \subset I_{n_2} \subset \cdots \subset I_{n_k}\) of \(A\), where each \(I_{n_i}\) has reduced dimension \(n_i\).
More precisely, \(\mathrm{SB}_{n_1,\ldots,n_k}(A)\) represents the functor which assigns to
each \(F\)-algebra \(R\) the set
\[
\mathrm{SB}_{n_1,\ldots,n_k}(A)(R)
=
\bigl\{ (I_1,\ldots,I_k) \mid
I_i \in \mathrm{SB}_{n_i}(A)(R),\; I_i \subset I_{i+1} \bigr\},
\]
where \(I_i\) is a right ideal of \(A_R = A \otimes_F R\) that is locally a direct
summand of reduced rank \(n_i\).
\end{definition}
In particular, when \(k=1\), \(\mathrm{SB}_r(A)\) denotes the \(r\)-th generalized Severi--Brauer variety associated with \(A\), parametrizing right ideals of \(A\) of reduced dimension \(r\). When \(r=1\), we have the classical Severi--Brauer variety associated with \(A\). Let \(K/F\) be a splitting field of \(A\), then \(\mathrm{SB}_r(A) \otimes_F K \cong \operatorname{Gr}(r,n)_K\). Hence each generalized Severi--Brauer variety is a form of Grassmanian. For further background, see \cite{blanchet}. The existence of a rational point in a generalized Severi--Brauer variety is characterized by the following lemma.
\begin{lemma}[Blanchet, {\cite[Proposition 3]{blanchet}}]\label{point,index and rational}
Let \(A\) be a central simple algebra over \(F\). Then the following are equivalent:
\begin{enumerate}
    \item The variety \(\mathrm{SB}_r(A)\) has an \(L\)-rational point.
    \item \(\mathrm{ind}(A \otimes_F L) \mid r\).
    \item \(\mathrm{SB}_r(A)_L\) is rational.
\end{enumerate}
\end{lemma}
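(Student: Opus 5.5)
The plan is to prove the equivalence (1) \(\Leftrightarrow\) (2) \(\Leftrightarrow\) (3) by working with right ideals of \(A_L\). After base change to \(L\), we may assume \(L = F\). Write \(A \cong M_m(D)\) with \(D\) a central division algebra of degree \(e = \mathrm{ind}(A)\), so that \(\deg A = me\).

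\textbf{Step 1: (1) \(\Leftrightarrow\) (2).} An \(F\)-point of \(\mathrm{SB}_r(A)\) is a right ideal \(I \subset A\) of reduced dimension \(r\), which means \(\dim_F I = r \cdot \deg A\). By Morita theory, right ideals of \(M_m(D)\) correspond to \(D\)-subspaces \(W \subset D^m\), via
\[
I = \operatorname{Hom}_D(D^m, W).
\]
Then \(\dim_F I = m \cdot \dim_D W \cdot e^2\). Hence the reduced dimension of \(I\) equals \(e \cdot \dim_D W\), so the reduced dimensions that occur are exactly the multiples of \(e\) lying in \([0, me]\). Consequently a point exists if and only if \(e \mid r\). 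This direction is standard.

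\textbf{Step 2: (3) \(\Rightarrow\) (1).} A rational variety has a dense open subset isomorphic to an open subset of affine space. Since \(F\) is assumed infinite here, that open subset has an \(F\)-point. If \(F\) is finite, then \(\mathrm{Br}(F) = 0\), so \(A\) is split and (1) holds trivially.

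\textbf{Step 3: (2) \(\Rightarrow\) (3).} This is the main step. Suppose \(r = se\) and fix a point \(W_0 \subset D^m\) with \(\dim_D W_0 = s\). Choose a complement \(C\), so that \(D^m = W_0 \oplus C\). Consider the locus of subspaces \(W\) that are complementary to \(C\). Each such \(W\) is the graph of a unique \(D\)-linear map \(W_0 \to C\), so this locus is identified with
\[
\operatorname{Hom}_D(W_0, C) \cong M_{s \times (m-s)}(D),
\]
which is an affine space over \(F\) of dimension \(s(m-s)e^2\). To conclude, check that this is a Zariski open subscheme of \(\mathrm{SB}_r(A)\) and that its dimension matches \(\dim \mathrm{Gr}(r, me) = r(me - r)\). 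Indeed, \(r(me - r) = se \cdot (m-s)e\). The identification must be done functorially in \(F\)-algebras \(R\), using ideals of \(A_R\) that are locally direct summands. Openness follows because the condition "\(I \cap I_C = 0\)", where \(I_C\) is the ideal corresponding to \(C\), is a transversality condition: it is the nonvanishing of a determinant on the Grassmannian after splitting.

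The main obstacle is making this big-cell argument rigorous at the level of functors of points; over a splitting field it reduces to the standard affine cell of the Grassmannian. Once it is established, \(\mathrm{SB}_r(A)\) is rational, which completes the cycle of implications.
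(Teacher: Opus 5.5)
Your proof is correct, and it takes a different route from the paper. The paper does not prove this lemma; it quotes Blanchet. The nearest argument it sketches is Remark~\ref{point on flag variety} for flag varieties. There, once an $L$-point exists, the variety is a homogeneous space $G/P$ under $\mathrm{GL}_1(A_L)$ with $P$ a parabolic subgroup defined over $L$, and rationality comes from the big cell of \cite[Theorem~21.20]{borel1991linear}.

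You instead use Morita theory for (1)$\Leftrightarrow$(2) and an explicit graph chart for (2)$\Rightarrow$(3). The chart is that same big cell made concrete. Write $I_0=\varepsilon A$ for an idempotent $\varepsilon$ and $J=(1-\varepsilon)A$. Then the graph of $\varphi\in\operatorname{Hom}_A(I_0,J)$ is $(1+u)I_0$ with $u=\varphi(\varepsilon)\in(1-\varepsilon)A\varepsilon$. So your cell is the orbit of $I_0$ under the opposite unipotent radical. Your version is elementary, characteristic-free and gives the dimension count $s(m-s)e^2$ explicitly. The group-theoretic version extends verbatim to Severi--Brauer flag varieties, which is why the paper uses it there.

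The step you call the main obstacle is short and needs no splitting field.
\begin{itemize}
\item Over an $F$-algebra $R$, define the cell by requiring $I\oplus J_R\to A_R$ to be an isomorphism. Over non-fields this, not $I\cap J_R=0$, is the right condition. It is open, being the invertibility locus of a determinant between locally free sheaves of equal rank.
\item Since $A_R=I_{0,R}\oplus J_R$ as right $A_R$-modules, projection along $J_R$ restricts to an $A_R$-linear isomorphism $I\to I_{0,R}$. Hence $I$ is the graph of a unique right $A_R$-linear map $I_{0,R}\to J_R$.
\item Conversely, every such graph is a right ideal complementary to $J_R$ and an $R$-direct summand of the correct rank.
\item Because $I_0$ is finitely generated projective over $A$, $\operatorname{Hom}_{A_R}(I_{0,R},J_R)=\operatorname{Hom}_A(I_0,J)\otimes_F R$. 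So the cell is the affine space on $\operatorname{Hom}_A(I_0,J)\cong\operatorname{Hom}_D(W_0,C)$.
\item It contains $I_0$, so it is nonempty and therefore dense in the irreducible variety $\mathrm{SB}_r(A)$.
\end{itemize}
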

\begin{remark}\label{point on flag variety}
An analogous statement holds for Severi--Brauer flag varieties. More precisely, let \(\mathrm{SB}_{n_1,\ldots,n_k}(A)\) be a Severi--Brauer flag variety. Then \(\mathrm{SB}_{n_1,\ldots,n_k}(A)(L)\neq \emptyset\) if and only if \(\mathrm{ind}(A_L)\mid d\), where \(d=gcd(n_1,\ldots,n_k,\mathrm{ind}(A))\) (see \cite[Equation 5.3]{MPW}). In this case, \(\mathrm{SB}_{n_1,\ldots,n_k}(A)_L \simeq GL_n/P\) for a parabolic subgroup \(P \subset GL_n\) defined over \(L\); in particular, it is rational by \cite[Theorem~21.20]{borel1991linear}.
\end{remark}
 We conclude this section with a description of the behavior of the index of a central simple algebra under base change to the function field of a Severi–-Brauer flag variety.
\begin{lemma}[Merkurjev, Panin, Wordsworth,{\cite[Equation 5.11]{MPW}}]\label{index after base change}
    Let \(\mathrm{SB}_{n_1,\ldots,n_k}(A)\) be a Severi--Brauer flag variety over \(F\). The index of \(A\) over \(F(\mathrm{SB}_{n_1,\ldots,n_k}(A))\) is given by 
    \[\mathrm{ind}(A \otimes_F F(\mathrm{SB}_{n_1,\ldots,n_k}(A))) = g.c.d(\mathrm{ind}(A),n_1,n_2,\dots,n_k).\]
\end{lemma}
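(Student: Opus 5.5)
The statement is the formula of Merkurjev--Panin--Wordsworth,
\[\mathrm{ind}\bigl(A \otimes_F F(X)\bigr) = \gcd(\mathrm{ind}(A),n_1,\ldots,n_k), \qquad X=\mathrm{SB}_{n_1,\ldots,n_k}(A).\]
I would prove it by two divisibilities. Set \(d=\gcd(\mathrm{ind}(A),n_1,\ldots,n_k)\) and \(K=F(X)\).

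\emph{Upper bound.} The generic point of \(X\) is a \(K\)-rational point of \(X_K\). By Remark~\ref{point on flag variety}, this forces \(\mathrm{ind}(A_K)\mid d\). If one prefers not to use the remark, the argument is direct. A \(K\)-point is a flag of right ideals \(I_1\subset\cdots\subset I_k\) of \(A_K\) with reduced dimensions \(n_i\). Each \(I_i\) gives a \(K\)-point of \(\mathrm{SB}_{n_i}(A)\), so Lemma~\ref{point,index and rational} gives \(\mathrm{ind}(A_K)\mid n_i\) for every \(i\). Also \(\mathrm{ind}(A_K)\mid \mathrm{ind}(A)\), since index can only drop under field extension. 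Hence \(\mathrm{ind}(A_K)\mid d\).

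\emph{Lower bound.} We need \(d\mid \mathrm{ind}(A_K)\).

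First, a reduction. The variety \(X\) depends only on \(A\), but its function field depends only on the Brauer class up to purely transcendental extensions. Concretely, write \(A\cong M_m(D)\) with \(D\) division of degree \(n=\mathrm{ind}(A)\), and consider the projection \(X\to \mathrm{SB}_{n_1}(A)\). Over \(F(\mathrm{SB}_d(D))\) the algebra \(A\) has index dividing \(d\), so \(X\) acquires a point there and becomes rational (Remark~\ref{point on flag variety}). Symmetrically, \(\mathrm{SB}_d(D)\) has a point over \(K\), since \(\mathrm{ind}(A_K)\mid d\) by the upper bound. So \(X\) and \(\mathrm{SB}_d(D)\) are stably birational, each being rational over the function field of the other. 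Index is unchanged under purely transcendental extensions, so it suffices to show
\[\mathrm{ind}\bigl(D\otimes F(\mathrm{SB}_d(D))\bigr)=d.\]

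Next, reduce to prime powers using Proposition~\ref{primary decomposition}. Write \(D=\bigotimes D_p\). The \(p\)-part of \(\mathrm{ind}(D_L)\) equals \(\mathrm{ind}((D_p)_L)\), so it is enough to show that \(p^{v_p(d)}\) divides \(\mathrm{ind}\bigl(D_p\otimes F(\mathrm{SB}_d(D))\bigr)\) for each prime \(p\).

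Then pass to a prime-to-\(p\) closure. Let \(E/F\) be a suitable finite extension of degree prime to \(p\) that splits the other primary components \(D_q\) for \(q\neq p\) while keeping \(D_p\) division. Over \(E\), the variety \(\mathrm{SB}_d(D)\) becomes stably birational to \(\mathrm{SB}_{p^{a}}(D_p)\), where \(p^a=p^{v_p(d)}\). Extensions of degree prime to \(p\) do not change \(p\)-parts of indices, and this holds also after base change to function fields, because \(E(X)/F(X)\) has degree prime to \(p\). So we reduce to the case where \(D\) is a division algebra of degree \(p^b\) and \(d=p^a\) with \(a\le b\).

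\emph{Main step.} In this case we must show that \(\mathrm{ind}\bigl(D\otimes F(\mathrm{SB}_{p^a}(D))\bigr)\ge p^a\). This is the genuine content of the lemma, and I expect it to be the main obstacle.

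My first attempt is a degree argument. Suppose \(L=F(Y)\), with \(Y=\mathrm{SB}_{p^a}(D)\), satisfied \(\mathrm{ind}(D_L)=p^c\) with \(c<a\). Then \(D_L\) is split by some extension \(L'/L\) of degree \(p^c\) prime-power. Specializing at a closed point of \(Y\) (using that \(Y\) is smooth, via a chain of DVRs/Rost-type specialization) would give a finite extension \(F'/F\) with \([F':F]\) dividing \([F(y):F]\cdot p^c\) and \(\mathrm{ind}(D_{F'})\le p^c\). One needs to show this is impossible, using the index of \(Y\): closed points of \(Y\) have degree divisible by \(p^{b-a}\) (the index of \(Y\) is \(p^{b-a}\), by Blanchet's lemma).

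If this argument is too weak, I would instead use the Merkurjev--Panin computation of \(K_0\) of \(\mathrm{SB}_{p^a}(D)\) (or of its generic fiber). The kernel of \(\Br(F)\to\Br(F(Y))\) is generated by \([D]^{p^a}\)-type classes in Amitsur's theorem for generalized Severi--Brauer varieties. Hence \([D_L]\) has exponent exactly \(p^a\) when \(\exp D\) is large. In general, however, one needs the index, not the exponent. So the \(K\)-theoretic computation, namely that \(K_0(Y)\) is free with the reduced-dimension image being \(p^a\mathbb Z\), is the route I would ultimately rely on.
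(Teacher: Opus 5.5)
\textbf{There is a gap: the main step is not proved.} Your upper bound is correct: each \(I_i\) gives a \(K\)-point of \(\mathrm{SB}_{n_i}(A)\), so \(\mathrm{ind}(A_K)\mid d\). Your two reductions are also correct: first to \(\mathrm{SB}_d(D)\) by stable birationality, then to \(D\) of degree \(p^b\) with \(d=p^a\) by a prime-to-\(p\) splitting of the other primary components.

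What is missing is \(p^a\mid \mathrm{ind}(D\otimes_F F(Y))\) for \(Y=\mathrm{SB}_{p^a}(D)\). This is the entire content of the lemma, and you sketch two routes but carry out neither. The paper gives no argument either; it imports the formula from \cite{MPW}, where it follows from the index reduction formula built on Panin's computation of \(K_0\) of twisted flag varieties. Your second route points that way, but two things fall short. The claim ``\(K_0(Y)\) is free with reduced-dimension image \(p^a\mathbb{Z}\)'' restates what has to be shown rather than deriving it. The Amitsur/exponent remark only helps when \(\exp D\ge p^a\), and it rests on an unproved description of \(\ker(\Br(F)\to\Br(F(Y)))\).

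\textbf{Your first route is the right idea but runs the wrong way.} Knowing that every closed point of \(Y\) has degree divisible by \(p^{b-a}\) gives no contradiction. You need a closed point \(y\) with \(v_p([F(y):F])=b-a\), equivalently \(\mathrm{ind}(D_{F(y)})=p^a\), and Blanchet's lemma does not produce one. Such a point can be built as follows.
\begin{enumerate}
\item Pass to the fixed field \(F_p\) of a pro-\(p\) Sylow subgroup of \(\mathrm{Gal}(F^s/F)\). There \(D\) stays division.
\item A maximal separable subfield of \(D_{F_p}\) has a Galois closure with \(p\)-group Galois group, so it contains an intermediate field \(M\) of degree \(p^{b-a}\) over \(F_p\). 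By the centralizer theorem, \(\mathrm{ind}(D_M)=p^a\).
\item Descend \(M\) to a finite extension \(M_0/F\) with \(v_p([M_0:F])=b-a\) and \(\mathrm{ind}(D_{M_0})=p^a\). Let \(y\) be the image of the corresponding \(M_0\)-point. Then \(F(y)\subseteq M_0\), which forces \(\mathrm{ind}(D_{F(y)})=p^a\).
\end{enumerate}

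Next, specialize the index itself rather than a splitting field. Your bookkeeping ``\([F':F]\) divides \([F(y):F]\,p^c\)'' is not justified, since residue degrees in a degree-\(p^c\) extension need not divide \(p^c\).
\begin{enumerate}
\item Since \(\mathcal{O}_{Y,y}\) is regular, a regular system of parameters gives a chain of DVRs leading from \(F(Y)\) down to \(F(y)\).
\item Over a DVR \(R\), the index of \(D\) on the special fibre divides the index on the generic fibre. The reason is that the closure in \(\mathrm{SB}(D)_R\) of a closed point of the generic fibre is finite and flat over \(R\). It therefore yields a \(0\)-cycle of the same degree on the special fibre. This is the principle of \cite[Lemma~2.10]{CT}, already used in Theorem~\ref{index of generalized SB}.
\end{enumerate}
Hence \(p^a=\mathrm{ind}(D_{F(y)})\mid \mathrm{ind}(D_{F(Y)})\), which closes the lower bound without any \(K\)-theory.
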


\section{Index of Severi--Brauer flag varieties}\label{index of SB flag variety}
The goal of this section is to show that the index of \(\mathrm{SB}_{n_1,\ldots,n_k}(A)\) is \(n/d\), where \(n=\mathrm{ind}(A)\) and \(d=\gcd(n_1,\ldots,n_k,n)\) (see Theorem~\ref{index of SB flag variety}), thereby extending \cite[Lemma~7.1]{zerocycle} to arbitrary characteristic. Since the index of a variety is a stable birational invariant, we reduce the problem to a generalized Severi--Brauer variety. We begin by recalling a standard criterion for establishing stable birationality between varieties. 

\begin{lemma}\label{stable birationality}
Suppose that \(X\) and \(Y\) are proper varieties over \(F\) such that \(X_{F(Y)}\) and \(Y_{F(X)}\) are rational. Then \(X\) and \(Y\) are stably birational.
\end{lemma}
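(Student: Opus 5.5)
The idea is to compare both $X$ and $Y$ with the product $X\times Y$, which maps to each factor with a rational generic fibre. First I would show that $X \times Y$ is birational to $X \times \P^{m}$ with $m=\dim Y$. Consider the projection $\pi_1 : X\times Y \to X$. Its generic fibre is $Y_{F(X)}$, and this is a rational variety over $F(X)$ by hypothesis. So there is a birational map $Y_{F(X)} \dashrightarrow \P^m_{F(X)}$, and the function field of $Y_{F(X)}$, which is $F(X\times Y)$, is $F(X)$-isomorphic to $F(X)(t_1,\dots,t_m)$. The latter is the function field of $X\times \P^m$. Since $X\times Y$ and $X\times\P^m$ are varieties over $F$ (integral because $X_{F(Y)}$ is rational, in particular integral), an $F$-isomorphism of function fields gives a birational equivalence $X\times Y \sim X\times \P^m$.

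Symmetrically, using the projection to $Y$ and the rationality of $X_{F(Y)}$, I get $F(X\times Y)\cong F(Y)(s_1,\dots,s_l)$ with $l=\dim X$, so $X\times Y \sim Y\times \P^l$. Putting the two together gives $X\times\P^m \sim Y\times\P^l$, which is exactly the definition of stable birationality recalled in Section~\ref{prelims}.

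The only point that needs care is integrality of $X\times Y$, so that the notion of a function field makes sense. I would argue this as follows. The generic fibre $X\times_F Y\times_Y \mathrm{Spec}\,F(Y) = X_{F(Y)}$ is integral, being rational. The projection $X\times Y\to Y$ is flat, so every generic point of $X\times Y$ lies over the generic point of $Y$. Reducedness holds because the product is flat over $Y$ with reduced generic fibre and $Y$ is integral; alternatively, one may simply work with the dense open of $X\times Y$ lying over a dense open of $Y$ where the fibres behave well. I expect this bookkeeping to be the only mild obstacle. Properness is not really needed for this argument; it only matters for the later use of birational invariance of the index.
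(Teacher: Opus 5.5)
Your proposal is correct and is essentially the paper's argument: both identify \(F(X\times_F Y)\) with a purely transcendental extension of \(F(X)\) and of \(F(Y)\), using the rational generic fibres of the two projections, and conclude \(X\times\P^{\dim Y}\sim X\times_F Y\sim Y\times\P^{\dim X}\). Your extra check that \(X\times_F Y\) is integral fills in a point the paper leaves implicit: it is irreducible because, by flatness over \(Y\), its generic points lie in the irreducible generic fibre, and it is reduced because it is torsion-free over \(Y\) with integral generic fibre.
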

\begin{proof}
Since \(X_{F(Y)}\) is rational, we have \(F(X \times_F Y) \cong F(Y)(t_1,\dots,t_r)\) with \(r=\dim X\). Similarly, \(F(X \times_F Y) \cong F(X)(s_1,\dots,s_s)\) with \(s=\dim Y\). Thus
\(X \times \mathbb{P}^s \sim_{\mathrm{bir}} X \times_F Y \sim_{\mathrm{bir}} Y \times \mathbb{P}^r\).
\end{proof}

\begin{lemma}\label{Severi--Brauer flag to generalized Severi--Brauer}
     Let \(X\) denote the Severi--Brauer flag variety \(\mathrm{SB}_{n_1,n_2,\dots,n_k}(A)\) associated with a central simple algebra \(A\) of index \(n\) over \(F\). Let \( d=gcd(n_1,n_2,\dots,n_k,n)\). Then \(X\) is stably birational to \(\mathrm{SB}_d(D)\) where \(D\) is the central division algebra over \(F\) Brauer-equivalent to \(A\).
\end{lemma}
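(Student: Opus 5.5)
We apply Lemma~\ref{stable birationality} with \(Y=\mathrm{SB}_d(D)\). Both \(X\) and \(Y\) are projective, being twisted forms of (partial) flag varieties and Grassmannians respectively, so it is enough to show that \(X_{F(Y)}\) and \(Y_{F(X)}\) are rational. First note that \(d \mid n = \deg D\). Moreover, if \(d = n\), then \(Y\) is a point and the claim reduces to \(X\) being rational over \(F\). This case is covered by the general argument below, since \(\mathrm{ind}(A)=n\) divides \(d\) when \(d=n\). So we may assume \(0<d\le n\), and \(\mathrm{SB}_d(D)\) makes sense (with the convention that \(\mathrm{SB}_n(D)=\mathrm{Spec}\,F\)).

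\emph{Rationality of \(Y_{F(X)}\).} By Lemma~\ref{index after base change}, \(\mathrm{ind}(A\otimes_F F(X)) = \gcd(n,n_1,\dots,n_k)=d\). Since \(D\) is Brauer-equivalent to \(A\), we get \(\mathrm{ind}(D_{F(X)})=d\), which divides \(d\). Lemma~\ref{point,index and rational}, applied to \(D\) with \(r=d\) and \(L=F(X)\), then shows that \(\mathrm{SB}_d(D)_{F(X)}\) is rational.

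\emph{Rationality of \(X_{F(Y)}\).} Apply Lemma~\ref{index after base change} to \(Y=\mathrm{SB}_d(D)\), viewed as the flag variety of type \((d)\), or directly use Lemma~\ref{point,index and rational}: the variety \(Y\) has an \(F(Y)\)-point (the generic point), so \(\mathrm{ind}(D_{F(Y)})\mid d\). Hence \(\mathrm{ind}(A_{F(Y)})\mid d\). The gcd attached to \(A\) over \(F(Y)\) is \(\gcd(n_1,\dots,n_k,\mathrm{ind}(A_{F(Y)}))\). Since \(\mathrm{ind}(A_{F(Y)})\) divides \(d\), which divides each \(n_i\), this gcd equals \(\mathrm{ind}(A_{F(Y)})\). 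Remark~\ref{point on flag variety} then gives \(X(F(Y))\neq\emptyset\) and the rationality of \(X_{F(Y)}\).

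The main subtlety is the bookkeeping in the second step. Remark~\ref{point on flag variety} is stated with \(\mathrm{ind}(A)\) in the gcd, so over the extension \(L=F(Y)\) one has to read it with \(\mathrm{ind}(A_L)\), and then observe that \(\mathrm{ind}(A_L)\mid d\mid n_i\). The other point to handle is the degenerate case \(d=n\), which was dealt with above. With both rationality statements established, Lemma~\ref{stable birationality} gives the stable birationality of \(X\) and \(\mathrm{SB}_d(D)\).
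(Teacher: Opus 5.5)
Your argument is correct and is essentially the paper's proof. Both verify the two rationality hypotheses of Lemma~\ref{stable birationality} using Lemma~\ref{index after base change}, Blanchet's criterion (Lemma~\ref{point,index and rational}) and Remark~\ref{point on flag variety}. The only difference is that the paper goes through the intermediate variety \(\mathrm{SB}_d(A)\): it first shows \(X\sim\mathrm{SB}_d(A)\), then \(\mathrm{SB}_d(A)\sim\mathrm{SB}_d(D)\) via \(\mathrm{ind}(A_E)=\mathrm{ind}(D_E)\). You instead compare \(X\) with \(\mathrm{SB}_d(D)\) in one step, and you also treat the degenerate case \(d=n\) explicitly.
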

\begin{proof}
We first show that \(X\) is stably birational to \(\mathrm{SB}_d(A)\). By Lemma~\ref{index after base change}, we have \(\mathrm{ind}(A\otimes F(\mathrm{SB}_d(A))=d\). Since \(d \mid n_i\) for each \(1 \le i \le k\), the variety \(X\) has a rational point over \(F(\mathrm{SB}_d(A))\), and hence \(X_{F(\mathrm{SB}_d(A))}\) is rational by Remark~\ref{point on flag variety}. Similarly, \(\mathrm{SB}_d(A)_{F(X)}\) is rational, as \(\mathrm{ind}(A \otimes F(\mathrm{SB}_{n_1,n_2,\dots,n_k}(A)))= g.c.d (n,n_1,n_2,\dots,n_k)=d\). Thus \(X\) and \(\mathrm{SB}_d(A)\) are stably birational by Lemma~\ref{stable birationality}. We now compare \(\mathrm{SB}_d(A)\) and \(\mathrm{SB}_d(D)\). Since \(\mathrm{ind}(A \otimes_F E)=\mathrm{ind}(D \otimes_F E)\) for any field extension \(E/F\), both \(\mathrm{SB}_d(D)_{F(\mathrm{SB}_d(A))}\) and \(\mathrm{SB}_d(A)_{F(\mathrm{SB}_d(D))}\) are rational varieties. Hence, \(\mathrm{SB}_d(D)\) and \(\mathrm{SB}_d(A)\) are stably birational by Lemma~\ref{stable birationality}, and the claim follows.
\end{proof}
The following lemma is extracted from Krashen's proof of the index of a generalized Severi--Brauer variety in \cite{zerocycle}. It does not impose any restriction on the characteristic of the base field. We include the argument for the reader's convenience.

\begin{lemma}[Krashen, see the proof of {\cite[Lemma~7.1]{zerocycle}}]\label{m divides index}
    Let \(A\) be a central simple algebra of index \(n\) over a field \(F\) and \(md =n\). Then \(m\) divides the index of \(\mathrm{SB}_d(A)\).
\end{lemma}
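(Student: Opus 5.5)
To prove that \(m\) divides the index of \(\mathrm{SB}_d(A)\), it suffices to show: for every closed point \(x\in \mathrm{SB}_d(A)\) with residue field \(L=F(x)\), we have \(m \mid [L:F]\). The index is the gcd of the degrees of closed points, so this is enough.

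Fix such an \(x\). Since \(\mathrm{SB}_d(A)\) has an \(L\)-rational point, Lemma~\ref{point,index and rational} gives \(\mathrm{ind}(A\otimes_F L)\mid d\). We then use the standard inequality \(\mathrm{ind}(A)\mid [L:F]\cdot \mathrm{ind}(A\otimes_F L)\) for finite extensions \(L/F\). To justify it, let \(D_L\) be the division algebra Brauer-equivalent to \(A_L\), of degree \(e=\mathrm{ind}(A_L)\). Choose a maximal subfield \(M\subset D_L\); then \([M:L]=e\) and \(M\) splits \(A_L\), hence splits \(A\). Since \(M/F\) is a finite splitting field of \(A\), we get \(\mathrm{ind}(A)\mid [M:F]=[L:F]\,e\). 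This holds in arbitrary characteristic: every finite splitting field has degree divisible by the index, via the embedding of \(A\) into a matrix algebra over \(M\), or equivalently via a module-dimension count \(\dim_F\) of a simple \(A\otimes M\)-module. So no separability is needed.

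Combining these, \(n \mid [L:F]\cdot e\) with \(e\mid d\) gives \(n \mid [L:F]\cdot d\), i.e.\ \(md \mid [L:F]\,d\), hence \(m\mid [L:F]\).

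The only step requiring care is the characteristic-free inequality \(\mathrm{ind}(A)\mid [L:F]\,\mathrm{ind}(A_L)\). I would invoke it from \cite{Gille_Szamuely_2006}, using the fact that any (not necessarily separable) finite splitting field has degree a multiple of the index. The relevant argument is that \(A\otimes_F M\cong M_{k}(M)\) makes \(M^{k}\) into an \(A\)-module. Its \(F\)-dimension \([M:F]\,k\) is then a multiple of \(\dim_F\) of the simple \(A\)-module, namely \(\deg(A)\cdot \mathrm{ind}(A)\). With \(k=\deg(A)\), this yields \(\mathrm{ind}(A)\mid [M:F]\).
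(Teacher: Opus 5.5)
Your proposal is correct and is essentially the paper's argument: an \(L\)-point gives \(\mathrm{ind}(A_L)\mid d\), a maximal subfield of the division algebra underlying \(A_L\) is a splitting field of \(A\) of degree \([L:F]\cdot\mathrm{ind}(A_L)\) over \(F\), and so \(n\mid d\,[L:F]\), i.e.\ \(m\mid [L:F]\). Your module-dimension proof that every finite splitting field, including an inseparable one, has degree divisible by the index supplies a characteristic-free step the paper leaves implicit. The paper needs that step too, because its \(E/F\) can be inseparable when \(L/F\) is.
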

\begin{proof}
    Suppose \(\mathrm{SB}_d(D)(L) \neq \emptyset\) for some finite field extension \(L/F\). By Lemma~\ref{point,index and rational}, this implies the index of \(D_L\) divides \(d\). Let \(E/L\) be a maximal separable splitting field of \(D_L\). Since \(E\) splits \(D\), we have
\(n \mid [E:F]=[E:L][L:F]\). As \([E:L]\mid d\), it follows that \(n \mid d \cdot [L:F]\),
or equivalently that \(m=n/d\) divides \([L:F]\). Hence, \(m\) divides the index of \(\mathrm{SB}_d(D)\).
\end{proof}

\begin{theorem}\label{index of generalized SB}
Let \(X = \mathrm{SB}_{n_1,n_2,\dots,n_k}(A)\) be a Severi--Brauer flag variety over \(F\) associated with a central simple algebra \(A\) of index \(n\). The index of \(X\) is given by \(n/d\), where \(d=gcd(n_1,n_2,\dots,n_k,n)\).
\end{theorem}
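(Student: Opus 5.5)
We first reduce to a generalized Severi--Brauer variety of a division algebra. The index of a proper variety (the gcd of the degrees of its closed points) is a stable birational invariant. Indeed, for proper $X$ the index equals the gcd of $[L:F]$ over finite extensions $L$ with $X(L)\neq\emptyset$. Moreover, if $X\times\P^m$ and $Y\times\P^l$ are birational, a rational point of $X\times \P^m$ over $L$ yields an $L$-point of $Y\times \P^l$. For this we use that a rational map from a smooth variety to a proper one is defined in codimension one, and then specialize along a chain of curves, or we use the standard fact that points of a given degree transfer between smooth proper stably birational varieties. Lemma~\ref{Severi--Brauer flag to generalized Severi--Brauer} then gives $\mathrm{ind}(X)=\mathrm{ind}(\mathrm{SB}_d(D))$, where $D$ is the division algebra Brauer-equivalent to $A$, of degree $n$. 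In fact we do not even need the invariance in general. By Remark~\ref{point on flag variety} and Lemma~\ref{point,index and rational}, for a finite extension $L/F$ we have $X(L)\neq\emptyset \iff \mathrm{ind}(A_L)\mid d \iff \mathrm{SB}_d(D)(L)\neq\emptyset$. Hence the two varieties have points over exactly the same finite extensions, so their indices coincide. This is the route I will take, since it avoids any smoothness subtleties.

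Write $m=n/d$. Lemma~\ref{m divides index} gives $m \mid \mathrm{ind}(\mathrm{SB}_d(D))$. It remains to produce closed points whose degrees have gcd dividing $m$. I will do this one prime at a time. For each prime $p$, I find a finite extension $L/F$ with $\mathrm{ind}(D_L)\mid d$ and $v_p([L:F])\le v_p(m)$.

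To build $L$, use the primary decomposition (Proposition~\ref{primary decomposition}), $D\cong D_1\otimes\cdots\otimes D_k$ with $\mathrm{ind}(D_i)=p_i^{a_i}$. Write $d=\prod p_i^{b_i}$, so that $m=\prod p_i^{a_i-b_i}$. In each $D_i$ choose a subfield $K_i$ that is a maximal subfield of a suitable sub-division algebra, such that $[K_i:F]=p_i^{a_i-b_i}$ and $\mathrm{ind}((D_i)_{K_i})=p_i^{b_i}$. Concretely, take a maximal subfield $M_i$ of $D_i$ and any intermediate field $F\subset K_i\subset M_i$ of degree $p_i^{a_i-b_i}$. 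Such a field exists if $M_i/F$ can be chosen separable with a Galois closure containing a $p$-subgroup chain, or after passing to the prime-to-$p_i$ extension fixed by a Sylow subgroup. The centralizer of $K_i$ in $D_i$ is a division algebra of degree $p_i^{b_i}$ over $K_i$, so $\mathrm{ind}((D_i)_{K_i})=p_i^{b_i}$.

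Set $L=K_1K_2\cdots K_k$, a compositum of degree dividing $\prod [K_i:F]=m$. Then $\mathrm{ind}((D_i)_L)\mid p_i^{b_i}$ for each $i$, hence $\mathrm{ind}(D_L)\mid d$. Thus $\mathrm{SB}_d(D)$ has a point of degree dividing $m$, which together with Lemma~\ref{m divides index} gives $\mathrm{ind}=m$.

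The main obstacle is the existence of the intermediate field $K_i$ of degree exactly $p_i^{a_i-b_i}$ in arbitrary characteristic, where a maximal subfield need not have a $p$-power tower. I would handle it prime by prime. Pass to the maximal prime-to-$p_i$ extension $F'$ (or to a finite prime-to-$p_i$ extension), over which $\mathrm{ind}(D_i)$ is unchanged. There a separable maximal subfield has $p$-group Galois closure, so a subfield of every $p$-power degree exists. If instead $M_i$ is purely inseparable, then it is a tower of degree-$p$ extensions anyway. Either way we obtain, for each prime $p$, a point of degree $m\cdot(\text{prime to }p)$. Taking gcd over $p$ yields index dividing $m$, which suffices without needing a single point of degree exactly $m$.
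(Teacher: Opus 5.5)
Your proof is correct in substance and takes a genuinely different route from the paper.

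\textbf{Reduction.} The paper invokes stable birational invariance of the index. You instead observe that $X$ and $\mathrm{SB}_d(D)$ have $L$-points for exactly the same $L$. That is cleaner and avoids any birational input.

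\textbf{Upper bound.} The paper quotes Krashen's computation when $\mathrm{char}(F)\nmid n$ or $F$ is perfect. In the remaining case it lifts $D$ to an Azumaya algebra over a Henselian DVR with residue field $F$ and characteristic-zero fraction field. It then applies Krashen to the generic fiber and transfers the bound through the specialization lemma of \cite{CT}. You build the closed points by hand. Over the fixed field of a Sylow $p$-subgroup, a separable maximal subfield of the $p$-primary part has $p$-group Galois closure. Hence it contains a subfield $K$ of degree $p^{a-b}$, and the double centralizer theorem gives $\mathrm{ind}(D_K)=p^{b}$.

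\textbf{Trade-off.} Your argument is elementary and uniform in the characteristic. It needs neither Krashen's theorem nor the lifting machinery (existence of $R$, $\Br(R)\cong\Br(F)$, specialization of the index). The paper's argument is shorter given those citations.

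\textbf{Details to tighten.}

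First, in the prime-by-prime version you should not form the compositum $K_1\cdots K_k$. Once $[K_i:F]$ is only $p_i^{a_i-b_i}$ times a factor prime to $p_i$, that factor may be divisible by $p$. Moreover, composita of non-Galois extensions can acquire new prime factors in their degree. Instead, for fixed $p$ work with all of $D$ over the Sylow fixed field $F'$. There the prime-to-$p$ components split automatically: the index of $D_i\otimes F'$ is a power of $p_i$ but is also the degree of a separable extension of $F'$, hence a power of $p$.

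Second, you need to descend from $F'$. If $K=F'(\theta)$, the $K$-point of $\mathrm{SB}_d(D)$ is already defined over $F_1(\theta)$ for some finite $F_1\subseteq F'$. Since $[F_1:F]$ is prime to $p$, this gives $v_p([F_1(\theta):F])=v_p(m)$. So your target should be a point whose degree has $p$-adic valuation $v_p(m)$, not degree $m$ times a factor prime to $p$.

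Finally, the purely inseparable case is superfluous, since every central division algebra has a separable maximal subfield.
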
 

\begin{proof}
Let \(D\) denote the central division algebra over \(F\) Brauer-equivalent to \(A\). By Lemma~\ref{Severi--Brauer flag to generalized Severi--Brauer}, we know \(X\) is stably birational to \(\mathrm{SB}_d(D)\). Since index of a variety is stably birational invariant, it suffices to prove that index of \(\mathrm{SB}_d(D)\) is \(n/d\). Note that in this case, \(d\) divides the index of \(D\). Thus by \cite[Theorem 7.1]{zerocycle}, the claim holds if char(F) does not divide \(n\) or if \(F\) is perfect. Hence we assume that \(F\) is an imperfect field of characteristic \(p > 0\) and \(p\) divides n. Let \(m = n/d\). By Lemma~\ref{m divides index}, \(m\) divides the index of \(\mathrm{SB}_d(D)\). It therefore suffices to show that the index of \(\mathrm{SB}_d(D)\) divides m. \\
There exists a Henselian discrete valuation ring, say \(R\), with residue field \(F\) and fraction field \(K\) of characteristic \(0\). Further, \(\Br(R)\) is isomorphic to \(\Br(F)\), thus there exists an Azumaya algebra \(\mathcal{D}\) over \(R\) such that \(\mathcal{D} \otimes_R F \cong D\). Let \(\tilde{D}:= \mathcal{D}\otimes_R K \in \Br(K)\).\\
Note that for any \(0 < r < n\), the \(r\)-th generalized Severi--Brauer scheme \(\mathrm{SB}_r(\mathcal{D})\) over \(Spec \, R\) is proper with special fiber \(\mathrm{SB}_r(D)\) and generic fiber \(\mathrm{SB}_r(\tilde{D})\). By \cite[Lemma~2.10]{CT}, the index of  \(\mathrm{SB}_r(D)\) divides the index of \(\mathrm{SB}_r(\tilde{D})\). In particular, for \(r=1\), we obtain \(n = \mathrm{ind}(D) \mid \mathrm{ind}(\tilde{D})\). Since \(\deg(\tilde{D})=n\), it follows that \(\mathrm{ind}(\tilde{D})=n\). Thus \(\tilde{D}\) is a division algebra over \(K\). Since \(char(K)=0\), the index of \(\mathrm{SB}_d(\tilde{D})\) is \(n/d= m\) \cite[Lemma 7.1]{zerocycle}. Taking \(r=d\), we conclude that the index of \(\mathrm{SB}_d(D)\) over \(F\) divides \(m\).      
\end{proof}
\section{Torsion bounds for \(\mathrm{A_0}\) of Severi--Brauer flag varieties}\label{proof of main thm}
In this section, we prove Theorem~\ref{main theorem}, which states that \(\mathrm{A_0}(\mathrm{SB}_{n_1,n_2,\dots,n_k}(A))\) is \((d,n/d)\)-torsion where \(d = gcd(n_1,n_2 \ldots,n_k,\mathrm{ind}(A))\). We also prove Corollary~\ref{coro} and Proposition~\ref{2-primary part is trivial}. We begin by reducing the problem of determining torsion bounds to the case where \(\mathrm{ind}(A)\) is a prime power.
\begin{proposition}\label{coprime degrees}
    Let \(A, B\) be central simple algebras of coprime degrees \(m,n\) respectively. Let \(s,t\) be two positive integers such that \(s|m\) and \(t |n\). Then the varieties \(\mathrm{SB}_{st}(A \otimes_F B)\) and \(\mathrm{SB}_s(A) \times \mathrm{SB}_t(B)\) are stably birational.
\end{proposition}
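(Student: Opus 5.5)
We will apply Lemma~\ref{stable birationality}. Set \(X=\mathrm{SB}_{st}(A\otimes_F B)\) and \(Y=\mathrm{SB}_s(A)\times \mathrm{SB}_t(B)\). These are proper varieties, and it suffices to show that \(X_{F(Y)}\) and \(Y_{F(X)}\) are rational. Both conditions reduce to index computations over a field extension \(L/F\), using Lemma~\ref{point,index and rational}: \(\mathrm{SB}_r(C)_L\) is rational if and only if \(\mathrm{ind}(C_L)\mid r\). For \(Y_L\), which is a product, we use that a product of rational varieties is rational.

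The basic input is the behaviour of index under tensor products of algebras of coprime degree. For any extension \(L/F\), the indices \(\mathrm{ind}(A_L)\) and \(\mathrm{ind}(B_L)\) divide \(m\) and \(n\), so they are coprime. Hence
\[\mathrm{ind}\bigl((A\otimes_F B)_L\bigr)=\mathrm{ind}(A_L)\,\mathrm{ind}(B_L).\]
This is standard: the primary components of \([A_L]+[B_L]\) are the primary components of \([A_L]\) together with those of \([B_L]\), and the index is multiplicative over primary components (Proposition~\ref{primary decomposition}). In particular, since \(\gcd(st,m)=s\) and \(\gcd(st,n)=t\), we have \(\mathrm{ind}((A\otimes B)_L)\mid st\) if and only if \(\mathrm{ind}(A_L)\mid s\) and \(\mathrm{ind}(B_L)\mid t\).

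Now take \(L=F(Y)\). By Lemma~\ref{index after base change} applied to each factor, \(\mathrm{ind}(A_{F(\mathrm{SB}_s(A))})\) divides \(s\), and this persists to the larger field \(F(Y)\). Similarly \(\mathrm{ind}(B_{F(Y)})\mid t\). Therefore \(\mathrm{ind}((A\otimes B)_{F(Y)})\mid st\), so \(X_{F(Y)}\) is rational.

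Conversely, take \(L=F(X)\). Lemma~\ref{index after base change} gives \(\mathrm{ind}((A\otimes B)_{F(X)})\mid st\). By the equivalence above, \(\mathrm{ind}(A_{F(X)})\mid s\) and \(\mathrm{ind}(B_{F(X)})\mid t\). Hence both factors of \(Y\) become rational over \(F(X)\), and so does their product.

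The main obstacle is justifying the multiplicativity of index for algebras of coprime degree over an arbitrary extension field, which the paper has not stated explicitly. The approach is to reduce to division algebras via Proposition~\ref{primary decomposition}. Write \(A_L\sim D_1\) and \(B_L\sim D_2\) with coprime degrees. Then \(D_1\otimes D_2\) is a division algebra, because its index is divisible by both \(\deg D_1\) and \(\deg D_2\): these equal the indices of \(D_1=(D_1\otimes D_2)^{\otimes k}\) and \(D_2=(D_1\otimes D_2)^{\otimes l}\) for suitable \(k,l\) by CRT on exponents, and the index of a power divides the index. Hence \(\mathrm{ind}(D_1\otimes D_2)\) equals the product of the degrees. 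A minor remaining point is to confirm that the lemma applies even though \(s\) need not divide the index of \(A\), only its degree. Lemma~\ref{index after base change} handles this, since it only involves the gcd.
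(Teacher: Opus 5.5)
Your proof is correct and follows essentially the same route as the paper: apply Lemma~\ref{stable birationality} and check rationality over the two function fields via index divisibility and Lemma~\ref{point,index and rational}. The differences are minor. You get the index bounds over \(F(X)\) and \(F(Y)\) from Lemma~\ref{index after base change} rather than from the generic point and Blanchet's lemma, and you explicitly justify the multiplicativity \(\mathrm{ind}((A\otimes B)_L)=\mathrm{ind}(A_L)\,\mathrm{ind}(B_L)\), which the paper uses without comment.
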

\begin{proof}
Let \(L/F\) denote the function field of \(\mathrm{SB}_{st}(A \otimes_F B)\). Then \(\mathrm{SB}_{st}(A \otimes_F B)(L) \neq \emptyset\) and by Lemma \ref{point,index and rational}, we know \(\mathrm{ind}((A \otimes_F B) \otimes_F L)|st\). Since \(\mathrm{ind}(A),\mathrm{ind}(B)\) are coprime by assumption, we get \(\mathrm{ind}(A \otimes_F L) | s\) and \(\mathrm{ind}(B \otimes_F L) |t \). This in turn implies that \(\mathrm{SB}_s(A) (L) \neq \emptyset\) and \( \mathrm{SB}_t(B)(L) \neq \emptyset\). By Lemma~\ref{point,index and rational}, we further get that \((\mathrm{SB}_s(A) \times \mathrm{SB}_t(B)) \otimes_F L) \) is rational. \\
Now let \(E/F\) denote the function field of \(\mathrm{SB}_s(A) \times \mathrm{SB}_t(B)\). Since 
both \(\mathrm{SB}_s(A)\) and \(\mathrm{SB}_t(B)\) has an \(E\)-rational point, by Lemma~\ref{point,index and rational}, \(\mathrm{ind}(A \otimes_F E)\) divides \(s\) and \(\mathrm{ind}(B \otimes_F E)\) divides \(t\). This implies that \(\mathrm{ind}((A \otimes_F B) \otimes_F E)|st\) and thus \(\mathrm{SB}_{st}(A \otimes_F B) \otimes_F E\) is rational by Lemma~\ref{point,index and rational}. We conclude using Lemma~\ref{stable birationality}.
\end{proof}
We now exploit the relation between \(\mathrm{SB}_d(D)\) and the symmetric power of \(\mathrm{SB}(D)\) to show that \(\mathrm{A_0}(\mathrm{SB}_d(D))\) is \(d\)-torsion when \(d-1\) is coprime to the index of \(D\). 
\begin{lemma}\label{d-torsion}
     Let \(D\) be a central division algebra of index \(n\). Suppose \(0<d<n\) is an integer such that \(d|n\) and \((d-1,n)=1\). Then the group \(\mathrm{A_0}(\mathrm{SB}_d(D))\)  is of \(d\)-torsion.
\end{lemma}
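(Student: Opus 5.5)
The plan is to avoid symmetric powers and use the incidence flag variety \(Y=\mathrm{SB}_{1,d}(D)\) as a correspondence between \(\mathrm{SB}(D)\) and \(\mathrm{SB}_d(D)\). The projection \(\pi\colon Y\to \mathrm{SB}_d(D)\), \((I_1\subset I_d)\mapsto I_d\), is smooth and projective. Its fiber over a point \(I_d\) with residue field \(L\) is the Severi--Brauer variety of the degree \(d\) algebra \(\mathrm{End}_{D_L}(I_d)\), so geometrically it is a \(\P^{d-1}\)-bundle. The idea is to pull a zero-cycle back along \(\pi\), cut it down with a codimension \(d-1\) class whose degree on every fiber is \(d\), and push it forward again.

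\textbf{Step 1: \(\mathrm{A_0}(Y)=0\).} Since \(\gcd(1,d,n)=1\), Lemma~\ref{Severi--Brauer flag to generalized Severi--Brauer} shows that \(Y\) is stably birational to \(\mathrm{SB}_1(D)=\mathrm{SB}(D)\). For smooth projective varieties, \(\mathrm{CH_0}\) (and hence \(\mathrm{A_0}\)) is a stable birational invariant. Moreover \(\mathrm{A_0}(\mathrm{SB}(D))=0\) in arbitrary characteristic by the results of Merkurjev--Chernousov \cite{MC} (or Panin/Krashen when \(\operatorname{char}F\nmid n\)). Hence \(\mathrm{A_0}(Y)=0\).

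\textbf{Step 2: a fiberwise degree \(d\) class.} Let \(T_\pi\) be the relative tangent bundle of \(\pi\); it is a vector bundle of rank \(d-1\) on \(Y\), defined over \(F\). Take \(c=c_{d-1}(T_\pi)\in \mathrm{CH}^{d-1}(Y)\). On each geometric fiber \(\P^{d-1}\), its degree is the top Chern class of \(T_{\P^{d-1}}\), which is the Euler characteristic \(d\). This is the point where we use that \(T_\pi\) makes sense over \(F\): the twisted bundle has no relative \(\mathcal O(1)\) in general, but its tangent bundle is always defined.

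\textbf{Step 3: the correspondence argument.} For a zero-cycle \(\alpha\) on \(\mathrm{SB}_d(D)\), flat pullback gives \(\pi^*\alpha\in \mathrm{CH}_{d-1}(Y)\). By the projection formula and compatibility with base change to residue fields of closed points, we get
\[
\pi_*\bigl(c\cdot \pi^*\alpha\bigr)=\deg(c|_{\text{fiber}})\,\alpha=d\,\alpha .
\]
To check this, reduce by linearity to \(\alpha=[x]\) a closed point. Then \(c\cdot\pi^*[x]\) is \(c_{d-1}\) of the tangent bundle of the fiber \(\pi^{-1}(x)\), pushed into \(Y\), and its push-forward to \(x\) is \(d\,[x]\). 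Since degrees are preserved, \(\deg\alpha=0\) forces \(c\cdot\pi^*\alpha\in \mathrm{A_0}(Y)=0\). Therefore \(d\,\alpha=0\), so \(\mathrm{A_0}(\mathrm{SB}_d(D))\) is \(d\)-torsion.

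\textbf{Main obstacle.} The delicate input is Step 1, namely the invariance of \(\mathrm{A_0}\) under stable birational equivalence together with the vanishing for \(\mathrm{SB}(D)\) in every characteristic. I would invoke \cite{MC} and the standard Fulton-type invariance of \(\mathrm{CH_0}\) for smooth proper varieties. The Chern class computation in Step 3 must also be done over the residue field \(L\), but it reduces to \(\bar L\), where the fiber is split.

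Note that this route does not visibly use the hypothesis \((d-1,n)=1\). If the Chern class argument turns out to need a correction, the fallback is the intended symmetric-power approach. There one uses the rational map \(\mathrm{SB}(D)^{\times d}/S_d \dashrightarrow \mathrm{SB}_d(D)\), sending \(d\) points to the ideal they span, to transfer the vanishing from \(\mathrm{SB}(D)\). In that version the condition \((d-1,n)=1\) controls the degrees of the auxiliary cycles that arise, for instance by combining multiples of \(d-1\) and \(n\) to get a coefficient prime to \(n\).
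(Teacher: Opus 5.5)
Your argument is correct, and it takes a genuinely different route from the paper's proof. The paper uses the degree-\(d\) ``marked point'' map \(X\times\mathrm{Sym}^{d-1}(X)\to\mathrm{Sym}^d(X)\), with \(X=\mathrm{SB}(D)\), together with two inputs: Krashen's theorem that \(\mathrm{Sym}^k(X)\) is stably birational to \(\mathrm{SB}_k(D)\), and Florence's theorem, which needs \((d-1,n)=1\), giving \(\mathrm{SB}_{d-1}(D)\sim_{\mathrm{bir}}X\times\P^{N'}\). This yields a degree-\(d\) dominant rational map \(X\times X\times\P^N\dashrightarrow\mathrm{Sym}^d(X)\) from a variety with trivial \(\mathrm{A_0}\), and a push--pull argument gives \(d\)-torsion.

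You instead use the smooth \(\P^{d-1}\)-fibration \(\pi\colon\mathrm{SB}_{1,d}(D)\to\mathrm{SB}_d(D)\). Its total space has trivial \(\mathrm{A_0}\) for every \(d\), since \(\gcd(1,d,n)=1\). The fiberwise degree \(d=\chi(\P^{d-1})\) of \(c_{d-1}(T_\pi)\) then plays the role of the degree of the cover. Your verifications are sound: smoothness of \(\pi\) gives \(\pi^*[x]=[Y_x]\), the projection formula for Chern classes applies, and the degree can be computed over \(\overline{F(x)}\).

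Your route has three advantages. Everything happens on smooth projective varieties with a smooth morphism, so you never need \(\mathrm{A_0}\)-invariance or a push--pull formula for the singular \(\mathrm{Sym}^k(X)\). You avoid the symmetric-power and Florence inputs entirely. And the hypothesis \((d-1,n)=1\) becomes superfluous: \(\mathrm{A_0}(\mathrm{SB}_d(D))\) is \(d\)-torsion for all \(0<d<n\). That would give the \(d\)-torsion half of the main theorem directly, without the primary decomposition and induction on prime factors. The paper's approach, in turn, fits the symmetric-power dictionary it reuses later for products of generalized Severi--Brauer varieties.

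Your fallback paragraph is unnecessary. It also misdescribes how the paper uses \((d-1,n)=1\): it is not a gcd-of-coefficients trick, but exactly the input that makes \(\mathrm{Sym}^{d-1}(X)\) stably birational to \(X\).
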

    \begin{proof}
Let \(X\) denote the Severi--Brauer variety \(\mathrm{SB}(D)\). There exists a natural surjective morphism \(\phi:X \times \text{Sym}^{d-1}(X) \to \text{Sym}^d(X)\) of degree \(d\). By 
\cite[Theorem 1.5]{symmetricpowers}, \(\text{Sym}^{d-1}(X)\)(resp. \(\text{Sym}^{d}(X)\)) is stably birational to \(\mathrm{SB}_{d-1}(D)\) (resp. \(\mathrm{SB}_d(D)\)). Since \(d-1\) is coprime to \(n\), it follows from \cite[Proposition 4.1]{Florence} that \(\mathrm{SB}_{d-1}(D) \sim_{\text{bir}} X \times \P^{N'}\) for a suitable integer \(N'\).  Combining these results, we deduce that \(\text{Sym}^{d-1}(X)\) is birational to \(X \times \P^N \) for some integer \(N\). Thus, there exists a dominant rational map 
\(\phi^{'}:X \times X \times \P^N  \dashrightarrow \text{Sym}^d(X) \)
of degree \(d\). Since the group \(\mathrm{A_0}\) is a stable birational invariant, we obtain
\(\mathrm{A_0}(X \times X \times \P^N) \cong \mathrm{A_0}(X) = 0\), where the vanishing follows from \cite[Corollary 7.3]{MC}. Finally, applying the standard restriction-corestriction argument, we conclude that \(\mathrm{A_0}(\text{Sym}^d(X)) \cong \mathrm{A_0}(\mathrm{SB}_d(D)) \) is a \(d\)-torsion group. 
\end{proof}
\begin{lemma}\label{cellular decomposition}
Let \(X\) and \(Y\) be proper varieties over \(F\), and suppose that \(X\) admits a cellular decomposition. If \(\mathrm{A_0}(Y)\) is \(d\)-torsion for some integer \(d\), then so is \(\mathrm{A_0}(X \times Y)\).
\end{lemma}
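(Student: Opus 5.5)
The plan is to use the cellular decomposition of \(X\) to express \(\mathrm{CH}_0(X\times Y)\) through the zero-cycle groups of \(Y\) over the cells. A cellular decomposition means a filtration by closed subvarieties
\[
\emptyset = X_{-1} \subset X_0 \subset X_1 \subset \cdots \subset X_N = X
\]
with each \(X_i \setminus X_{i-1} \cong \mathbb{A}^{a_i}_F\). The natural choice is to work with the filtration of \(X\times Y\) by the closed subschemes \(X_i \times Y\).

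The key steps, in order, are as follows.

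First, apply the localization sequence for Chow groups of dimension-\(0\) cycles,
\[
\mathrm{CH}_0(X_{i-1}\times Y) \to \mathrm{CH}_0(X_i\times Y) \to \mathrm{CH}_0(\mathbb{A}^{a_i}\times Y) \to 0 .
\]

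Second, use homotopy invariance, \(\mathrm{CH}_j(\mathbb{A}^{a}\times Y)\cong \mathrm{CH}_{j-a}(Y)\). This gives \(\mathrm{CH}_0(\mathbb{A}^{a_i}\times Y)=0\) whenever \(a_i>0\). Only the zero-dimensional cell \(X_0\) contributes. Since a cellular decomposition with cells defined over \(F\) has \(X_0\) a rational point, we have \(X_0=\mathrm{Spec}\,F\).

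Third, conclude by induction on \(i\) that the pushforward \(\mathrm{CH}_0(Y)=\mathrm{CH}_0(X_0\times Y)\to \mathrm{CH}_0(X\times Y)\) along \(y\mapsto (x_0,y)\) is surjective. If several \(0\)-cells occur in a more general convention, the pushforward from the direct sum of copies of \(\mathrm{CH}_0(Y)\) is surjective instead.

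Fourth, compare degrees. The map \(\iota_*:\mathrm{CH}_0(Y)\to\mathrm{CH}_0(X\times Y)\) is degree-preserving, since \(x_0\) is rational. Composing with the proper pushforward \(p_{Y*}\) gives the identity on \(\mathrm{CH}_0(Y)\). Hence \(\iota_*\) is an isomorphism compatible with degree, and \(\mathrm{A_0}(X\times Y)\cong \mathrm{A_0}(Y)\), which is \(d\)-torsion.

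In the several-\(0\)-cell variant, the degree-zero part of \(\bigoplus \mathrm{CH}_0(Y)\) is generated by two kinds of elements: \(\mathrm{A_0}(Y)\) in each summand, and differences \([y]_j-[y]_k\) between copies. The second kind maps to zero because all rational points of the connected rational variety \(X\) are rationally equivalent. The first kind is \(d\)-torsion. Pushforward of zero-cycles preserves degree, so the image of the degree-zero part is exactly \(\mathrm{A_0}(X\times Y)\), because the surjection respects degree.

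The main obstacle is the second step: the vanishing of \(\mathrm{CH}_0\) of the positive-dimensional affine cells times \(Y\). This requires the homotopy-invariance isomorphism for the trivial bundle \(\mathbb{A}^a\times Y\to Y\) together with the fact that \(\mathrm{CH}_{-a}=0\). Both are standard in Fulton. Everything else is formal bookkeeping with localization and proper pushforward.
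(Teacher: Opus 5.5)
Your argument is correct and is essentially the paper's. The paper cites Fulton's Example~1.10.2 (surjectivity of \(\mathrm{CH_0}(X)\otimes\mathrm{CH_0}(Y)\to\mathrm{CH_0}(X\times Y)\), whose proof is your localization-and-homotopy induction along \(X_i\times Y\)) together with \(\mathrm{CH_0}(X)\cong\mathbb{Z}\). You unwind that citation and add the retraction \(p_{Y*}\circ\iota_*=\mathrm{id}\), which gives the slightly sharper \(\mathrm{A_0}(X\times Y)\cong\mathrm{A_0}(Y)\).

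The only soft spot is your several-\(0\)-cell aside. Rationality of \(X\) alone does not make all rational points rationally equivalent; this can fail for singular proper rational varieties. What you actually need there is \(\mathrm{CH_0}(X)\cong\mathbb{Z}\), the same fact the paper invokes. The aside is moot anyway, since a proper variety with a cellular decomposition has exactly one \(0\)-cell.
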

\begin{proof}
Since \(X\) admits a cellular decomposition, the exterior product map
\[
\mathrm{CH_0}(X) \otimes \mathrm{CH_0}(Y) \xrightarrow{\ \times\ } \mathrm{CH_0}(X \times Y)
\]
is surjective \cite[Example~1.10.2]{fulton2012intersection}. Moreover, \(\mathrm{CH_0}(X) \cong \mathbb{Z}\), generated by a zero-cycle of degree one. Hence the product map induces a surjection \(\mathrm{A_0}(Y) \twoheadrightarrow \mathrm{A_0}(X \times Y)\), and therefore \(\mathrm{A_0}(X \times Y)\) is \(\mathrm{A_0}(Y)\)-torsion.
\end{proof}
We now prove Theorem~\ref{main theorem}.
\begin{proof}[Proof of Theorem~\ref{main theorem}]
Let \(D\) be the division algebra Brauer-equivalent to \(A\) over \(F\). Since \(X\) is stably birational to \(\mathrm{SB}_d(D)\) (Lemma~\ref{Severi--Brauer flag to generalized Severi--Brauer}), it suffices to show that \(\mathrm{A_0}(\mathrm{SB}_d(D))\) is \((d,n/d)\)-torsion. Since \(\mathrm{A_0}(\mathrm{SB}_d(D))\) is \(\mathrm{ind}(\mathrm{SB}_d(D))\)-torsion and \(\mathrm{ind}(\mathrm{SB}_d(D))=n/d\) by Theorem~\ref{index of generalized SB}, it remains to show that \(\mathrm{A_0}(\mathrm{SB}_d(D))\) is \(d\)-torsion. \\
Let \(D \cong \bigotimes_{i=1}^k D_i\) be the primary decomposition of \(D\) with \(\mathrm{ind}(D_i)=p_i^{a_i}\) for \(1 \leq i \leq k\) (see, Proposition~\ref{primary decomposition}). Let \(d=\prod_{i=1}^k p_i^{b_i}\). By Proposition~\ref{coprime degrees}, \(\mathrm{A_0}(\mathrm{SB}_d(D)) \cong \mathrm{A_0}\Bigl(\prod_{i=1}^k \mathrm{SB}_{p_i^{b_i}}(D_i)\Bigr)\). We proceed by induction on the number of prime factors of \(d\). If \(d=p_1^{b_1}\), then Lemma~\ref{d-torsion} shows that \(\mathrm{A_0}(\mathrm{SB}_d(D))\) is \(d\)-torsion.
Now let \(d'=\prod_{i=1}^{k-1} p_i^{b_i}\) and set \(Y=\prod_{i=1}^{k-1} \mathrm{SB}_{p_i^{b_i}}(D_i)\). By the induction hypothesis, \(\mathrm{A_0}(Y)\) is \(d'\)-torsion.\\
For each \(1 \le i \le k\), let \(L_i/F\) be a maximal separable splitting field of \(D_i\), and let \(E=L_1\cdots L_{k-1}\). Then \((Y \times \mathrm{SB}_{p_k^{b_k}}(D_k))_E \cong \Bigl(\prod_{i=1}^{k-1} \mathrm{Gr}(p_i^{b_i},p_i^{a_i})\Bigr)_E \times \mathrm{SB}_{p_k^{b_k}}(D_k \otimes_F E)\). Since \([E:F]\) is coprime to \(\mathrm{ind}(D_k)=p_k^{a_k}\), we have \(\mathrm{ind}(D_k \otimes_F E)=\mathrm{ind}(D_k)\). Hence, by Lemma~\ref{d-torsion}, the group \(\mathrm{A_0}(\mathrm{SB}_{p_k^{b_k}}(D_k \otimes_F E))\) is \(p_k^{b_k}\)-torsion. As Grassmannians admit a cellular decomposition, Lemma~\ref{cellular decomposition} implies that \(\mathrm{A_0}\bigl((Y \times \mathrm{SB}_{p_k^{b_k}}(D_k))_E\bigr)\) is \(p_k^{b_k}\)-torsion. By the restriction--corestriction argument, it follows that
\begin{equation}\label{first}
\mathrm{A_0}(Y \times \mathrm{SB}_{p_k^{b_k}}(D_k)) \text{ is } \Bigl(\prod_{i=1}^{k-1} p_i^{a_i}\Bigr)p_k^{b_k}\text{-torsion}.
\end{equation}
Since \(L_k\) splits \(D_k\), we have \((Y \times \mathrm{SB}_{p_k^{b_k}}(D_k))_{L_k} \cong Y_{L_k} \times \mathrm{Gr}(p_k^{b_k},p_k^{a_k})\). By Lemma~\ref{cellular decomposition} and the induction hypothesis, the group \(\mathrm{A_0}(Y_{L_k} \times \mathrm{Gr}(p_k^{b_k},p_k^{a_k}))\) is \(d'\)-torsion. Applying restriction--corestriction again, we obtain
\begin{equation}\label{second}
\mathrm{A_0}(Y \times \mathrm{SB}_{p_k^{b_k}}(D_k)) \text{ is } \Bigl(\prod_{i=1}^{k-1} p_i^{b_i}\Bigr)p_k^{a_k}\text{-torsion}.
\end{equation}
Taking the gcd of the bounds in \eqref{first} and \eqref{second}, we conclude that \(\mathrm{A_0}(\mathrm{SB}_d(D))\) is \(d\)-torsion.
\end{proof}

\begin{proof}[Proof of Corollary~\ref{coro}]
Since \(\mathrm{SB}_r(A)\) is stably birational to \(\mathrm{SB}_d(D)\), where \(d=(r,n)\) and \(D\) is the central division algebra Brauer-equivalent to \(A\), it suffices to show that \(\mathrm{A_0}(\mathrm{SB}_d(D))=0\). For any such \(d\), the integer \(n/d\) is coprime to \(d\) by hypothesis. It follows from Theorem~\ref{main theorem} that \(\mathrm{A_0}(\mathrm{SB}_d(D))\) is trivial.
\end{proof}
\begin{proposition}\label{2-primary part is trivial}
Let \(A\) be a central simple algebra over a field \(F\) of index \(n\). Assume that \(char(F)\nmid n\) and that \(n\) is even but not divisible by \(8\). Then \(\mathrm{A_0}(\mathrm{SB}_r(A))=0\) for every \(r\) satisfying \((r,n)=2\).
\end{proposition}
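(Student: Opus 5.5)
By Lemma~\ref{Severi--Brauer flag to generalized Severi--Brauer}, the variety \(\mathrm{SB}_r(A)\) is stably birational to \(\mathrm{SB}_2(D)\), where \(D\) is the division algebra Brauer-equivalent to \(A\) and \(2=(r,n)\). Since \(\mathrm{A_0}\) is a stable birational invariant, it suffices to show \(\mathrm{A_0}(\mathrm{SB}_2(D))=0\).

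The plan is to first apply Theorem~\ref{main theorem}. It shows that \(\mathrm{A_0}(\mathrm{SB}_2(D))\) is \((2,n/2)\)-torsion, hence \(2\)-torsion. If \(4\nmid n\), then \(n/2\) is odd and the group already vanishes, so the remaining case is \(n=4m\) with \(m\) odd.

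In that case I would write \(D\cong D_1\otimes D'\) with \(\mathrm{ind}(D_1)=4\) and \(\mathrm{ind}(D')=m\) odd (Proposition~\ref{primary decomposition}). By Proposition~\ref{coprime degrees}, \(\mathrm{SB}_2(D)\) is stably birational to \(\mathrm{SB}_2(D_1)\times \mathrm{SB}_1(D')\). Let \(L/F\) be a maximal separable splitting field of \(D'\); it has odd degree \(m\). Over \(L\) the product becomes \(\mathrm{SB}_2(D_1\otimes L)\times \P^{m-1}_L\), and \(\mathrm{ind}(D_1\otimes L)=4\) still. Lemma~\ref{cellular decomposition} gives a surjection \(\mathrm{A_0}(\mathrm{SB}_2(D_{1,L}))\twoheadrightarrow \mathrm{A_0}\) of the product over \(L\). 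Restriction--corestriction then shows that \(\mathrm{A_0}\) over \(F\) is killed by \(m\) times the exponent of \(\mathrm{A_0}(\mathrm{SB}_2(D_{1,L}))\). Since the group over \(F\) is also \(2\)-torsion and \(m\) is odd, it is enough to prove \(\mathrm{A_0}(\mathrm{SB}_2(Q))=0\) for any degree-\(4\) division algebra \(Q\) over a field of characteristic \(\neq 2\).

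This last step is the heart of the argument, and I would not reprove it. It is exactly Krashen's result quoted in the introduction: \(\mathrm{A_0}(\mathrm{SB}_{n_1,\ldots,n_k}(A))=0\) when \(d=2\) and \(\mathrm{ind}(A)\mid 4\), under the assumption \(\mathrm{char}(F)\nmid\mathrm{ind}(A)\). Since \(\mathrm{char}(F)\nmid n\) implies \(\mathrm{char}(L)\neq 2\), Krashen's theorem applies to \(D_{1,L}\) and gives the vanishing.

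As a fallback, one can view \(\mathrm{SB}_2(Q)\) as the involution variety of the quadratic form given by the Albert-type \(6\)-dimensional form attached to \(\lambda^2 Q\). Then one would invoke the triviality of \(\mathrm{A_0}\) for projective quadrics of dimension \(4\) (Swan/Karpenko); the case \(4\nmid\exp\) is also covered by Merkurjev--Chernousov. The only delicate bookkeeping is checking the hypotheses: oddness of \([L:F]\) preserves the index of \(D_1\), and the characteristic assumption persists after base change.
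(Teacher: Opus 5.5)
Your proof is correct. Its odd-degree step is exactly what the paper does: you split the prime-to-$2$ part over a maximal separable subfield $L$ of odd degree, kill $\mathrm{A_0}(\mathrm{SB}_2(D_{1,L}))$ by Krashen's theorem, and conclude by restriction--corestriction that the group is $[L:F]$-torsion.

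You differ in how you get the complementary $2$-power bound. The paper does not call on Theorem~\ref{main theorem}. Instead it also splits the $2$-primary component over a maximal separable subfield $E$ of degree $2$ or $4$. There $\mathrm{SB}_2$ of that component becomes a Grassmannian, and the Severi--Brauer variety of the odd part has trivial $\mathrm{A_0}$ by Panin/Merkurjev--Chernousov. So the group is $[E:F]$-torsion, and $\gcd([E:F],[L:F])=1$ finishes. You instead import $2$-torsion from Theorem~\ref{main theorem}.

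Given that theorem, your route is shorter. It also shows that the case $4\nmid n$ needs neither Krashen's result nor the characteristic hypothesis. On the other hand, for $d=2$ that bound comes from Lemma~\ref{d-torsion}, i.e.\ from the degree-$2$ map $X\times X\dashrightarrow\mathrm{Sym}^2X$ and the stable birationality $\mathrm{Sym}^2(\mathrm{SB}(D))\sim\mathrm{SB}_2(D)$. The paper's argument needs only the two vanishing theorems. It treats index $2$ and $4$ for the $2$-primary part uniformly, and settles for the weaker but sufficient $[E:F]$-torsion.

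One caution about your fallback. $\mathrm{SB}_2(Q)$ is a quadric (of an Albert form) only when $\exp(Q)=2$. If $\exp(Q)=4$, then $\lambda^2Q$ is Brauer-equivalent to a nonsplit quaternion algebra, and $\mathrm{SB}_2(Q)$ is an involution variety that is not a quadric over the base field, so Swan/Karpenko does not apply. Your main line does not depend on this, since Krashen's theorem covers every $Q$ of index dividing $4$.
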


\begin{proof}
Let \(D\) be the central division algebra Brauer-equivalent to \(A\), and write \(D \cong D' \otimes \tilde{D}\), where \(D'\) is the \(2\)-primary component and \(\tilde{D}\) is the prime-to-\(2\) component of \(D\). By Lemma~\ref{Severi--Brauer flag to generalized Severi--Brauer} and Proposition~\ref{coprime degrees}, we have
\(\mathrm{A_0}(\mathrm{SB}_r(A)) \cong \mathrm{A_0}(\mathrm{SB}_2(D') \times \mathrm{SB}(\tilde{D}))\).\\
Let \(L/F\) be a maximal separable splitting field of \(\tilde{D}\). Then \(\mathrm{SB}(\tilde{D})_L \cong \mathbb{P}^m\) for some \(m\), and hence Lemma~\ref{cellular decomposition} implies that \(\mathrm{A_0}((\mathrm{SB}_2(D') \times \mathrm{SB}(\tilde{D}))_L)\) is \(\mathrm{A_0}(\mathrm{SB}_2(D')_L)\)-torsion. Since \(\mathrm{ind}(D') \mid 4\), the hypotheses on \(F\) and \cite[Theorem~7.3]{zerocycle} imply that \(\mathrm{A_0}(\mathrm{SB}_2(D')_L)=0\). Thus \(\mathrm{A_0}((\mathrm{SB}_2(D') \times \mathrm{SB}(\tilde{D}))_L)=0\), and restriction--corestriction shows that \(\mathrm{A_0}(\mathrm{SB}_2(D') \times \mathrm{SB}(\tilde{D}))\) is \([L:F]\)-torsion.\\
Now let \(E/F\) be a maximal separable splitting field of \(D'\). Since \(\mathrm{SB}_2(D')_E\) is a Grassmannian, Lemma~\ref{cellular decomposition} implies that \(\mathrm{A_0}((\mathrm{SB}_2(D') \times \mathrm{SB}(\tilde{D}))_E)\) is \(\mathrm{A_0}(\mathrm{SB}(\tilde{D})_E)\)-torsion. By \cite[Corollary~7.3]{MC}, \(\mathrm{A_0}(\mathrm{SB}(\tilde{D})_E)=0\), and hence \(\mathrm{A_0}(\mathrm{SB}_2(D') \times \mathrm{SB}(\tilde{D}))\) is \([E:F]\)-torsion. Since \([L:F]\) and \([E:F]\) are coprime, we conclude that \(\mathrm{A_0}(\mathrm{SB}_2(D') \times \mathrm{SB}(\tilde{D}))=0\).
\end{proof}
\section{Refined torsion bounds via products}\label{Refined torsion bounds via products}

In \cite[Proposition~4.1]{MC}, Merkurjev and Chernousov show that if \(X\) is a scheme over \(F\) and \(Y\) is a projective homogeneous variety satisfying \(Y(F(x)) \neq \emptyset\) for every \(x \in X\), then the projection \(\pi : X \times Y \to X\) induces an isomorphism \(\mathrm{CH_0}(X \times Y) \cong \mathrm{CH_0}(X)\). In particular, if \(\mathrm{A_0}(X \times Y)=0\), then \(\mathrm{A_0}(X)=0\). We extend this principle in Proposition~\ref{better torsion bounds} by replacing the existence of rational points with the weaker assumption that \(\mathrm{A_0}(Y_{F(x)})=0\) for all \(x \in X\), and show that \(\mathrm{A_0}(X)\) is \(\mathrm{lcm}\{\,\mathrm{ind}(Y_{F(t)}) : t \in X_{(0)}\,\}\)-torsion.

For a division algebra \(D\) of index \(p^a\), Lemma~\ref{d-torsion} gives that \(\mathrm{A_0}(\mathrm{SB}_{p^b}(D))\) is \(p^b\)-torsion for \(2 \le b \le a-1\). Assuming \(\mathrm{A_0}(\mathrm{SB}_p(D))=0\), this bound improves to \(p^{b-1}\) (Corollary~\ref{reduction in torsion}), yielding an inductive approach to the vanishing of \(\mathrm{A_0}\) for generalized Severi--Brauer varieties. As a direct application, we show that if \(A\) has exponent not divisible by \(4\), then \(\mathrm{A_0}(\mathrm{SB}_r(A))\) is \(2\)-torsion whenever \((r,\mathrm{ind}(A))=4\) (Corollary~\ref{2-torsion}).

\begin{proposition}\label{better torsion bounds}
Let \(X,Y\) be smooth projective varieties over \(F\). Assume \(\mathrm{A_0}(X \times Y) = 0\). If \(\mathrm{A_0}(Y_L)=0\) for every finite extension \(L/F\), then \(\mathrm{A_0}(X)\) is 
\(\mathrm{lcm}\{\,\mathrm{ind}(Y_{F(t)}) : t \in X_{(0)}\,\}\)-torsion.
\end{proposition}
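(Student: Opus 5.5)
We study the projection \(\pi: X \times Y \to X\) and the pushforward \(\pi_*: \mathrm{CH_0}(X\times Y)\to \mathrm{CH_0}(X)\). Let \(N=\mathrm{lcm}\{\mathrm{ind}(Y_{F(t)}) : t\in X_{(0)}\}\). Fix \(\alpha\in \mathrm{A_0}(X)\) and write \(\alpha=\sum_j a_j [t_j]\) with closed points \(t_j\in X\) and \(\sum_j a_j\deg(t_j)=0\). The aim is to show that \(N\alpha\) lies in \(\pi_*(\mathrm{A_0}(X\times Y))\). Since \(\mathrm{A_0}(X\times Y)=0\) and \(\pi_*\) preserves degree, this forces \(N\alpha=0\).

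\textbf{Lifting.} For each closed point \(t\in X_{(0)}\), the fibre \(\pi^{-1}(t)=Y_{F(t)}\) has index \(e_t:=\mathrm{ind}(Y_{F(t)})\). So there is a zero-cycle \(z_t\) on \(Y_{F(t)}\) of degree \(e_t\). We push \(z_t\) into \(X\times Y\) along the closed embedding \(Y_{F(t)}\hookrightarrow X\times Y\) and call the result \(\tilde z_t\). Then \(\pi_*\tilde z_t = e_t[t]\) in \(\mathrm{CH_0}(X)\). Indeed, each closed point \(y\) of \(Y_{F(t)}\) maps to \(t\) with residue degree \([F(y):F(t)]\), and these degrees add up to \(\deg z_t=e_t\). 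Since \(e_t\mid N\), we can define \(\beta=\sum_j a_j (N/e_{t_j})\tilde z_{t_j}\), and then \(\pi_*\beta = N\alpha\). Degree is compatible with proper pushforward, so \(\deg\beta=\deg(N\alpha)=0\). Hence \(\beta\in \mathrm{A_0}(X\times Y)=0\), and therefore \(N\alpha=\pi_*\beta=0\).

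\textbf{Where the extra hypothesis might enter.} The argument above never uses that \(\mathrm{A_0}(Y_L)=0\) for finite \(L/F\). That hypothesis would be needed for a more refined statement, for instance one where \(z_t\) should be canonical up to rational equivalence. It is also what the authors presumably use later: to make the pushforward \(\mathrm{A_0}(Y_{F(t)})\)-independent, or to compare with \(\mathrm{CH_0}(X\times Y)\to \mathrm{CH_0}(X)\) as in \cite[Proposition~4.1]{MC}. Concretely, it shows that the zero-cycles of degree \(e_t\) on each fibre are all rationally equivalent. I would note this use, but the torsion bound itself follows from the lifting argument alone.

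\textbf{Main obstacle.} The only delicate point is checking that \(\pi_*\) on the fibre-supported cycles gives exactly \(e_t[t]\) and preserves degree. This is the standard compatibility of degree with proper pushforward (Fulton, Definition 1.4), applied to the composite \(Y_{F(t)}\to \mathrm{Spec}\,F(t)\to X\). Smoothness and projectivity of \(X\) and \(Y\) are only needed for \(\pi\) to be proper and for \(\mathrm{A_0}\) to be defined via degree. I do not foresee further difficulty.
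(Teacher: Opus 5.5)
Your proof is correct, and it takes a genuinely more elementary route than the paper's. The paper runs Rost's spectral sequence for \(\pi\colon X\times Y\to X\). This gives a commutative diagram whose rows end in the surjections \(\bigoplus_{t\in X_{(0)}}\mathrm{CH_0}(Y_{F(t)})\twoheadrightarrow\mathrm{CH_0}(X\times Y)\) and \(\bigoplus_{t\in X_{(0)}}\mathbb{Z}\twoheadrightarrow\mathrm{CH_0}(X)\), with the degree maps \(g\) in the middle column and \(\pi_*\) on the right. The paper then reads off the bound from the surjection \(\operatorname{coker}(g)=\bigoplus_t\mathbb{Z}/e_t\mathbb{Z}\twoheadrightarrow\operatorname{coker}(\pi_*)\) together with injectivity of \(\pi_*\).

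Your lifts \(\tilde z_t\) with \(\pi_*\tilde z_t=e_t[t]\) are exactly the cycle-level content of that surjection. Because you work directly with a degree-zero \(\alpha\), you never need the paper's identification \(\mathrm{A_0}(X)\cong\operatorname{coker}(\pi_*)\). In general that is only an inclusion: the cokernel is an extension of \(\mathrm{ind}(X)\mathbb{Z}/\mathrm{ind}(X\times Y)\mathbb{Z}\) by \(\mathrm{A_0}(X)\), which is harmless for a torsion statement.

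You are also right that the hypothesis \(\mathrm{A_0}(Y_L)=0\) plays no role in the bound. The cokernel of \(g\) is \(\bigoplus_t\mathbb{Z}/e_t\mathbb{Z}\) and surjects onto \(\operatorname{coker}(\pi_*)\) whether or not \(g\) is injective. The paper uses injectivity of \(g\) only to identify \(\mathrm{CH_0}(Y_{F(t)})\) with \(e_t\mathbb{Z}\), as in the Remark that follows.

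So the paper's route buys a description of \(\operatorname{coker}(\pi_*)\) inside Rost's framework. Yours buys a short proof using only proper pushforward and compatibility of degree. It is valid for proper \(X,Y\) under the single hypothesis \(\mathrm{A_0}(X\times Y)=0\), and it even gives a per-class bound: each \(\alpha\) is killed by the lcm of the \(e_t\) over its support. Finally, \(N\) is indeed finite, since pulling back a closed point \(y\in Y\) to \(Y_{F(t)}\) gives a cycle of degree \([F(y):F]\), so each \(e_t\) divides \(\mathrm{ind}(Y)\).
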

\begin{proof}
Consider the projection \(\pi: X \times Y \to X\). The spectral sequence associated with \(\pi\) (\cite[Section 8]{Rost}) \[
E^1_{p,q}= \bigsqcup\limits_{t \in X_{(p)}}A_q(Y_{F(t)},K_p) \Rightarrow A_{p+q}(X \times Y, K_0)\]
yields an exact sequence \({E^{1}_{1,0}} \xrightarrow{\partial} {E^{1}_{0,0}} \rightarrow  \mathrm{CH_0}(X \times Y) \rightarrow 0\). Expanding the terms, we obtain a commutative diagram
\[
\begin{tikzcd}
\bigoplus\limits_{t \in X_{(1)}} K_1(Y_{F(t)}) 
\arrow[d, "f"] 
\arrow[r, "\partial"] 
& \bigoplus\limits_{t \in X_{(0)}} \mathrm{CH_0}(Y_{F(t)}) 
\arrow[d, "g"]   
\arrow[r, two heads] 
& \mathrm{CH_0}(X \times Y)
\arrow[d, "\pi_{*}"] \\
\bigoplus\limits_{t \in X_{(1)}} K_1(F(t))              
\arrow[r, "\partial"] 
& \bigoplus\limits_{t \in X_{(0)}} \mathrm{CH_0}(F(t))                                   
\arrow[r, two heads] 
& \mathrm{CH_0}(X).          
\end{tikzcd}
\]
Since the degree map \(\mathrm{CH_0}(X \times Y) \to \mathbb{Z}\) factors through \(\pi_*\), the assumption \(\mathrm{A_0}(X \times Y)=0\) implies that \(\pi_*\) is injective. Moreover, for each \(t \in X_{(0)}\), the extension \(F(t)/F\) is finite, so by assumption \(\mathrm{A_0}(Y_{F(t)})=0\). It follows that the degree map \(\mathrm{CH_0}(Y_{F(t)}) \to \mathbb{Z}\) is injective; hence, \(g\) is injective. Its cokernel is \(\bigoplus_{t \in X_{(0)}} \Z/\mathrm{ind}(Y_{F(t)})\Z\).
A diagram chase then yields a short exact sequence
\[
0 \longrightarrow \operatorname{coker}(f) \longrightarrow \operatorname{coker}(g) \longrightarrow \operatorname{coker}(\pi_*) \longrightarrow 0.
\]
Since \(\mathrm{A_0}(X) \cong \operatorname{coker}(\pi_*)\), it follows that \(\mathrm{A_0}(X)\) is annihilated by 
\(\mathrm{lcm}\{\mathrm{ind}(Y_{F(t)}) : t \in X_{(0)}\}\).
\end{proof}
\begin{remark}
The proof shows that, in Proposition~\ref{better torsion bounds}, it suffices to assume \(\mathrm{A_0}(Y_{F(t)})=0\) for every \(t \in X_{(0)}\).
\end{remark}

\begin{lemma}\label{Symmetric powers and stable birationality}
    Let \(A \) be a central simple algebra of index \(n\) over \(F\). Then for any two integers,\(1 \leq d,e \leq n-1\), \(\mathrm{A_0}(\mathrm{SB}_e(A) \times \mathrm{SB}_d(A)) \cong \mathrm{A_0}(\mathrm{SB}_{(d,e)}(A))\).
\end{lemma}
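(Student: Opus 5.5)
The plan is to show that $\mathrm{SB}_e(A)\times \mathrm{SB}_d(A)$ and $\mathrm{SB}_{(d,e)}(A)$ are stably birational, and then use that $\mathrm{A_0}$ is a stable birational invariant. This invariance was already used in the proof of Lemma~\ref{d-torsion}. Write $g=(d,e)$. To prove stable birationality I will apply Lemma~\ref{stable birationality}. Thus I must check two things: that the product becomes rational over $K:=F(\mathrm{SB}_g(A))$, and that $\mathrm{SB}_g(A)$ becomes rational over $E:=F(\mathrm{SB}_e(A)\times\mathrm{SB}_d(A))$.

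\emph{First direction.} By Lemma~\ref{index after base change}, $\mathrm{ind}(A_K)=(n,g)=g$. Since $g$ divides both $d$ and $e$, Lemma~\ref{point,index and rational} shows that $\mathrm{SB}_e(A)_K$ and $\mathrm{SB}_d(A)_K$ are both rational. Hence their product over $K$ is rational.

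\emph{Second direction.} Over $E$ both factors have rational points, so Lemma~\ref{point,index and rational} gives $\mathrm{ind}(A_E)\mid e$ and $\mathrm{ind}(A_E)\mid d$. Therefore $\mathrm{ind}(A_E)\mid g$, and the same lemma shows that $\mathrm{SB}_g(A)_E$ is rational.

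There is one subtlety: $\mathrm{SB}_r(A)$ is defined via reduced dimension inside $A$, whose degree may exceed $n$. The statement allows $1\le d,e\le n-1$, so these are legitimate varieties, and Lemma~\ref{point,index and rational} applies for $r<\deg A$. Also $g\le n-1$, so $\mathrm{SB}_g(A)$ makes sense.

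Given the stable birationality, the conclusion is immediate:
\[\mathrm{A_0}(\mathrm{SB}_e(A)\times\mathrm{SB}_d(A))\cong \mathrm{A_0}(\mathrm{SB}_{g}(A)).\]
The step needing the most care is the invariance of $\mathrm{A_0}$ under stable birational equivalence for smooth projective varieties. The paper uses this freely. Alternatively one can argue directly: $\mathrm{CH_0}$ of a smooth projective variety is a birational invariant (Fulton / Colliot-Thélène), and $\mathrm{CH_0}(Z\times\mathbb{P}^m)\cong\mathrm{CH_0}(Z)$ by Lemma~\ref{cellular decomposition} together with the projection. All the varieties involved are smooth and projective, so this suffices.

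The lemma's title also points to an alternative route via symmetric powers and \cite{symmetricpowers}. I would keep the direct function-field argument above, since it relies only on Lemmas~\ref{stable birationality}, \ref{point,index and rational} and \ref{index after base change}.
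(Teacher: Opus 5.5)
Your argument is correct, and it takes a different route from the paper.

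The paper obtains the stable birational equivalence through symmetric powers. It first quotes Kollár's equivalence $\mathrm{Sym}^e(\mathrm{SB}(A))\times\mathrm{Sym}^d(\mathrm{SB}(A))\sim_{\mathrm{stab}}\mathrm{Sym}^{(d,e)}(\mathrm{SB}(A))$ \cite[Theorem~1.1(6)]{Kollar}. It then applies the Krashen--Saltman comparison $\mathrm{Sym}^k(\mathrm{SB}(A))\sim_{\mathrm{stab}}\mathrm{SB}_k(A)$ \cite[Theorem~1.5]{symmetricpowers}.

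You bypass symmetric powers entirely. You check the two hypotheses of Lemma~\ref{stable birationality} directly, using Lemma~\ref{point,index and rational} and Lemma~\ref{index after base change}. This is the same mechanism the paper itself uses in Lemma~\ref{Severi--Brauer flag to generalized Severi--Brauer} and Proposition~\ref{coprime degrees}.

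Your version is self-contained and relies only on results already stated in the paper. It also makes the reason transparent: both sides acquire a rational point (equivalently, become rational) over exactly those fields $L$ with $\mathrm{ind}(A_L)\mid (d,e)$. The paper's version fits the symmetric-power viewpoint suggested by the lemma's label, at the cost of importing two external theorems.

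One small slip: you write $\mathrm{ind}(A_K)=(n,g)=g$, but $g=(d,e)$ need not divide $n$ (e.g.\ $n=4$, $d=e=3$). In general only $\mathrm{ind}(A_K)=(n,g)\mid g$ holds. Since divisibility is all you use, the argument is unaffected.
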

\begin{proof}
    By \cite[Theorem~1.1(6)]{Kollar}, there is a stable birational equivalence
\[
\mathrm{Sym}^e(\mathrm{SB}(A)) \times \mathrm{Sym}^d(\mathrm{SB}(A)) \sim_{\mathrm{stab}} \mathrm{Sym}^{(d,e)}(\mathrm{SB}(A)).
\]
For any \(k \le n\), \(\mathrm{Sym}^k(\mathrm{SB}(A))\) is stably birational to \(\mathrm{SB}_k(A)\) (\cite[Theorem~1.5]{symmetricpowers}). Hence
\[
\mathrm{SB}_e(A) \times \mathrm{SB}_d(D) \sim_{\mathrm{stab}} \mathrm{SB}_{(d,e)}(A).
\]
Since \(\mathrm{A_0}\) is invariant under stable birational equivalence, it follows that 
\[
\mathrm{A_0}(\mathrm{SB}_e(A) \times \mathrm{SB}_d(A)) \cong \mathrm{A_0}(\mathrm{SB}_{(d,e)}(A)).
\]
\end{proof}

\begin{remark}
We illustrate Proposition~\ref{better torsion bounds} in the case of Severi--Brauer varieties. Let \(D\) be a central division algebra over \(F\) of index \(p^a\). By \cite[Corollary~7.3]{MC}, \(\mathrm{A_0}(\mathrm{SB}(D)_L)=0\) for every finite extension \(L/F\). By Lemma~\ref{Symmetric powers and stable birationality}, \(\mathrm{A_0}(\mathrm{SB}_{p^c}(D) \times \mathrm{SB}(D))\cong \mathrm{A_0}(\mathrm{SB}(D))=0\) for every \(0 \leq c \leq a-1\). Applying Proposition~\ref{better torsion bounds} with \(X=\mathrm{SB}_{p^b}(D)\) and \(Y=\mathrm{SB}(D)\), we obtain the commutative diagram
\begin{center}
\begin{tikzcd}
\bigoplus\limits_{t \in X_{(1)}} K_1(D_{F(t)})
\arrow[d, "f"] 
\arrow[r, "\partial"] 
& \bigoplus\limits_{t \in X_{(0)}} \mathrm{ind}(D_{F(t)})\mathbb{Z} 
\arrow[d, "g", hook]   
\arrow[r, two heads] 
& \mathrm{CH_0}(X \times Y)
\arrow[d, "\pi_{*}", hook] \\
\bigoplus\limits_{t \in X_{(1)}} (F(t))^{\times}              
\arrow[r, "\partial"] 
& \bigoplus\limits_{t \in X_{(0)}} \mathbb{Z}                                   
\arrow[r, two heads] 
& \mathrm{CH_0}(X).          
\end{tikzcd}
\end{center}
and a short exact sequence
\[
0 \to \operatorname{coker}(f) \to \operatorname{coker}(g) \to \mathrm{A_0}(\mathrm{SB}_{p^b}(D)) \to 0.
\]
\end{remark}

\begin{corollary}\label{reduction in torsion}
Let \(D\) be a central division algebra over \(F\) of index \(p^a\). If \(\mathrm{A_0}(\mathrm{SB}_p(D_L))=0\) for every finite extension \(L/F\), then for each integer \(b\) with \(2 \leq b \leq a-1\), the group \(\mathrm{A_0}(\mathrm{SB}_{p^b}(D))\) is \(p^{b-1}\)-torsion.
\end{corollary}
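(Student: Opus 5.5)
Apply Proposition~\ref{better torsion bounds} with \(X=\mathrm{SB}_{p^b}(D)\) and \(Y=\mathrm{SB}_p(D)\).

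\emph{Step 1: the product has trivial \(\mathrm{A_0}\).} By Lemma~\ref{Symmetric powers and stable birationality},
\[
\mathrm{A_0}(\mathrm{SB}_{p^b}(D)\times \mathrm{SB}_p(D)) \cong \mathrm{A_0}(\mathrm{SB}_{(p^b,p)}(D)) = \mathrm{A_0}(\mathrm{SB}_p(D)),
\]
since \(b\ge 1\). The hypothesis with \(L=F\) gives \(\mathrm{A_0}(\mathrm{SB}_p(D))=0\), so \(\mathrm{A_0}(X\times Y)=0\).

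\emph{Step 2: the fibrewise condition.} The hypothesis that \(\mathrm{A_0}(\mathrm{SB}_p(D_L))=0\) for every finite \(L/F\) is exactly the remaining assumption of Proposition~\ref{better torsion bounds}. Both varieties are smooth and projective, being forms of Grassmannians. The proposition therefore shows that \(\mathrm{A_0}(\mathrm{SB}_{p^b}(D))\) is annihilated by
\[
\mathrm{lcm}\{\mathrm{ind}(\mathrm{SB}_p(D)_{F(t)}) : t\in X_{(0)}\}.
\]

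\emph{Step 3: bounding the indices.} Fix a closed point \(t\in X\). Then \(X(F(t))\neq\emptyset\), so by Lemma~\ref{point,index and rational} we have \(\mathrm{ind}(D_{F(t)})\mid p^b\). Write \(\mathrm{ind}(D_{F(t)})=p^c\) with \(c\le b\). By Lemma~\ref{Severi--Brauer flag to generalized Severi--Brauer}, \(\mathrm{SB}_p(D_{F(t)})\) is stably birational to \(\mathrm{SB}_{(p,p^c)}(D'_t)\), where \(D'_t\) is the division algebra Brauer-equivalent to \(D_{F(t)}\). Theorem~\ref{index of generalized SB} then gives
\[
\mathrm{ind}(\mathrm{SB}_p(D)_{F(t)}) = \frac{p^c}{(p,p^c)}.
\]
This equals \(1\) if \(c=0\) and \(p^{c-1}\) if \(c\ge 1\); in either case it divides \(p^{b-1}\). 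The index of a variety is a stable birational invariant, so it is legitimate to compute it on \(\mathrm{SB}_{(p,p^c)}(D'_t)\).

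Taking the lcm over all closed points, \(\mathrm{A_0}(\mathrm{SB}_{p^b}(D))\) is \(p^{b-1}\)-torsion.

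The main point requiring care is Step~1. One must check that Lemma~\ref{Symmetric powers and stable birationality} applies with \(e=p^b\) and \(d=p\), which needs \(p^b\le n-1\), i.e.\ \(b\le a-1\). This holds by assumption, and \(b\ge 2\) ensures that \(p\ne p^b\), so the product is a genuinely different variety from \(\mathrm{SB}_p(D)\). The rest is bookkeeping of indices.
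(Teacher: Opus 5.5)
Your proposal is correct and follows the paper's proof essentially step for step. You get the vanishing of \(\mathrm{A_0}(\mathrm{SB}_{p^b}(D)\times\mathrm{SB}_p(D))\) from Lemma~\ref{Symmetric powers and stable birationality}, apply Proposition~\ref{better torsion bounds}, and bound the fibre indices using Lemma~\ref{point,index and rational} and Theorem~\ref{index of generalized SB}. Your formula \(p^c/(p,p^c)\) is slightly more careful than the paper's \(\mathrm{ind}(D_{F(t)})/p\), which gives the wrong value in the split case \(c=0\).
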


\begin{proof}
Let \(\mathrm{SB}(D)\) denote the Severi--Brauer variety associated with \(D\). By Lemma~\ref{Symmetric powers and stable birationality}, we obtain \(\mathrm{A_0}(\mathrm{SB}_{p^c}(D) \times \mathrm{SB}_p(D)) \cong \mathrm{A_0}(\mathrm{SB}_p(D))\) for every \(2 \leq c \leq a-1\). Since \(\mathrm{A_0}(\mathrm{SB}_p(D))=0\) by assumption, it follows that \(\mathrm{A_0}(\mathrm{SB}_{p^b}(D) \times \mathrm{SB}_p(D))=0\) for all \(2 \le b \le a-1\). Set \(X=\mathrm{SB}_{p^b}(D)\) and \(Y=\mathrm{SB}_p(D)\). By Proposition~\ref{better torsion bounds}, \(\mathrm{A_0}(X)\) is annihilated by \(\mathrm{lcm}\{\,\mathrm{ind}(Y_{F(t)}) : t \in X_{(0)}\,\}\). By Lemma~\ref{point,index and rational}, for \(t \in X_{(0)}\) we have \(\mathrm{ind}(D_{F(t)}) \mid p^b\), so \(\mathrm{ind}(D_{F(t)}) \in \{1,p,\dots,p^b\}\). By Theorem~\ref{index of generalized SB}, \(\mathrm{ind}(Y_{F(t)}) = \mathrm{ind}(\mathrm{SB}_p(D_{F(t)})) = \mathrm{ind}(D_{F(t)})/p\), so \(\mathrm{ind}(Y_{F(t)}) \in \{1,p,\dots,p^{b-1}\}\). Hence the lcm is \(p^{b-1}\), and it follows that \(\mathrm{A_0}(\mathrm{SB}_{p^b}(D))\) is \(p^{b-1}\)-torsion.
\end{proof}

As a consequence of Corollary~\ref{reduction in torsion}, we obtain the following.
\begin{corollary}\label{2-torsion}
   Let \(A\) be a central simple algebra over \(F\) of index \(n\) whose exponent is not divisible by \(4\). Then \(\mathrm{A_0}(\mathrm{SB}_r(A))\) is \(2\)-torsion whenever \((r,n)=4\). 
\end{corollary}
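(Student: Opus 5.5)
Reduce to the $2$-primary division algebra, split off the odd part by a coprime-index restriction--corestriction, and then apply Corollary~\ref{reduction in torsion} with $p=2$, $b=2$.

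\textbf{Reduction to the $2$-primary part.} Let $D$ be the division algebra Brauer-equivalent to $A$. By Lemma~\ref{Severi--Brauer flag to generalized Severi--Brauer}, $\mathrm{SB}_r(A)$ is stably birational to $\mathrm{SB}_4(D)$, so it suffices to treat $\mathrm{SB}_4(D)$. Write $D\cong D'\otimes\tilde D$, where $D'$ is the $2$-primary component, of index $2^a$ with $a\ge 2$, and $\tilde D$ is the odd part. Proposition~\ref{coprime degrees} gives $\mathrm{A_0}(\mathrm{SB}_4(D))\cong \mathrm{A_0}(\mathrm{SB}_4(D')\times \mathrm{SB}(\tilde D))$.

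\textbf{Removing the odd factor.} Let $E$ be a maximal separable splitting field of $D'$, so $[E:F]$ is a power of $2$. Then $\mathrm{SB}_4(D')_E$ is a Grassmannian. Lemma~\ref{cellular decomposition} together with \cite[Corollary~7.3]{MC} shows that $\mathrm{A_0}$ of the product vanishes over $E$. Restriction--corestriction then makes $\mathrm{A_0}$ of the product $2$-primary torsion.

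Now let $L$ be a maximal separable splitting field of $\tilde D$, of odd degree. Over $L$, $\mathrm{SB}(\tilde D)_L\cong\P^m$, so by Lemma~\ref{cellular decomposition} the group $\mathrm{A_0}$ of the product over $L$ is a quotient of $\mathrm{A_0}(\mathrm{SB}_4(D'_L))$. Moreover $D'_L$ is still division of index $2^a$, and its exponent is still not divisible by $4$, since the exponent cannot grow under extension. Restriction--corestriction with the odd number $[L:F]$ then shows the $2$-primary part injects. Hence it suffices to prove that $\mathrm{A_0}(\mathrm{SB}_4(D''))$ is $2$-torsion for every division algebra $D''$ of index $2^a$ and exponent dividing $2$, over any finite extension of $F$.

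\textbf{The $2$-primary case.} If $a=2$, then $\mathrm{SB}_4(D'')$ is a point, so $\mathrm{A_0}=0$. For $a\ge 3$, apply Corollary~\ref{reduction in torsion} with $p=2$ and $b=2\le a-1$. The required hypothesis is that $\mathrm{A_0}(\mathrm{SB}_2(D''_{L'}))=0$ for every finite extension $L'$. This is exactly the Merkurjev--Chernousov vanishing result quoted in the introduction, for flag varieties with $d=2$ and exponent not divisible by $4$. The exponent of $D''_{L'}$ divides $2$, so that result applies; when $2=\gcd(2,\mathrm{ind}(D''_{L'}))$ fails because the index drops to $1$ or $2$, the variety has a rational point and is rational, so $\mathrm{A_0}=0$ trivially. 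The corollary then gives that $\mathrm{A_0}(\mathrm{SB}_4(D''))$ is $2$-torsion.

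The main obstacle is justifying the hypothesis of Corollary~\ref{reduction in torsion}. One must confirm that the Merkurjev--Chernousov theorem covers $\mathrm{SB}_2$ in arbitrary characteristic under the exponent condition, and that this condition is preserved under all finite base changes. Both points appear fine, since the exponent only divides after extension.
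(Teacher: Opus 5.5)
Your proposal is correct and follows the paper's proof. You reduce to $\mathrm{SB}_4(D')\times\mathrm{SB}(\tilde D)$, obtain the hypothesis of Corollary~\ref{reduction in torsion} (with $p=2$, $b=2$) from the Merkurjev--Chernousov vanishing for $\mathrm{SB}_2$ under the exponent condition, and conclude by restriction--corestriction over splitting fields of $D'$ and $\tilde D$. Your packaging of the last step (first $2$-primary, then injectivity under the odd-degree $L$, applying the corollary over $L$) is equivalent to the paper's gcd argument. You also usefully treat the case $\mathrm{ind}(D')=4$, where $b\le a-1$ fails and which the paper leaves implicit.
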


\begin{proof}
By Lemma~\ref{Severi--Brauer flag to generalized Severi--Brauer}, \(\mathrm{SB}_r(A)\) is stably birational to \(\mathrm{SB}_4(D)\), where \(D\) is the division algebra Brauer-equivalent to \(A\). Let \(D \simeq D' \otimes \tilde{D}\), where \(D'\) is the \(2\)-primary component and \(\tilde{D}\) has index prime to \(2\). By Proposition~\ref{coprime degrees}, \(\mathrm{SB}_4(D) \sim_{\mathrm{stab}} \mathrm{SB}_4(D') \times \mathrm{SB}(\tilde{D})\), and hence \(\mathrm{A_0}(\mathrm{SB}_r(A)) \cong \mathrm{A_0}\bigl(\mathrm{SB}_4(D') \times \mathrm{SB}(\tilde{D})\bigr)\).
Since the exponent of \(D'\) is not divisible by \(4\), the same holds for \(D'_L\) over any finite extension \(L/F\). By \cite[Corollary~8.4]{MC}, we have \(\mathrm{A_0}(\mathrm{SB}_2(D'_L))=0\) for all such \(L\). Applying Corollary~\ref{reduction in torsion} with \(p=2\) and \(b=2\), it follows that \(\mathrm{A_0}(\mathrm{SB}_4(D'))\) is \(2\)-torsion.
As in the proof of Theorem~\ref{main theorem}, passing to splitting fields of \(D'\) and \(\tilde{D}\) and using restriction--corestriction, we see that 
\(\mathrm{A_0}\bigl(\mathrm{SB}_4(D') \times \mathrm{SB}(\tilde{D})\bigr)\)
is both \(\mathrm{ind}(D')\)-torsion and \(2\,\mathrm{ind}(\tilde{D})\)-torsion. Hence it is \(\gcd\bigl(\mathrm{ind}(D'),\, 2\,\mathrm{ind}(\tilde{D})\bigr)=2\)-torsion. This proves the claim.
\end{proof}

\section{Index reduction over local and global fields}\label{Index reduction over local and global fields}
Let \(F\) be a local field. It is well known that there is a canonical isomorphism \(\operatorname{inv}_F \colon \Br(F) \xrightarrow{\sim} \mathbb{Q}/\mathbb{Z}\). For any finite extension \(L/F\), the restriction map \(\operatorname{Res}_{L/F} \colon \Br(F) \to \Br(L)\) satisfies \(\operatorname{inv}_L \circ \operatorname{Res}_{L/F} = [L:F] \cdot\operatorname{inv}_F\) \cite[Chapter 13, Proposition 7]{serre2013local}. In particular, for \(A \in \Br(F)\), we have \(\mathrm{ind}(A_L)=\mathrm{ind}(A)/[L:F]\). This allows us to construct degree \(p\) extensions with controlled index behavior: given two degree \(p\) extensions \(L_0/F\) and \(L_1/F\) with \(\mathrm{ind}(A_{L_i})=p^{m-1}\), one can find a third degree \(p\) extension \(K/F\), such that \(\mathrm{ind}(A_K)=p^{m-1}\) and \(\mathrm{ind}(A_{KL_i})=p^{m-2}\) for \(i=0,1\). A key input is that a non-archimedean local field admits at least \(p+1\) distinct extensions of degree \(p\) (Lemma~\ref{atleast 3 distinct deg p}). \\
For global fields, this construction is more subtle. Using a global realisation theorem (Theorem~\ref{Conrad}), we patch local data to produce a global extension with the required properties; this is carried out in Lemma~\ref{local-global}, which plays a crucial role in the proof of Lemma~\ref{p-special connectedness-global}.
\begin{lemma} \label{atleast 3 distinct deg p}
Let \(F\) be a non-archimedean local field and \(p\) a prime. Then there exist at least \(p+1\) distinct extensions of \(F\) of degree \(p\) inside a fixed separable closure \(F^s\). 
\end{lemma}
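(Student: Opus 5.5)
The plan is to use Kummer/Galois theory through local class field theory. Degree \(p\) abelian (cyclic) extensions of \(F\) inside \(F^s\) correspond to open subgroups of index \(p\) in \(F^\times\). Equivalently, they correspond to surjections \(F^\times/(F^\times)^p \to \mathbb{Z}/p\) up to automorphism of \(\mathbb{Z}/p\), i.e. to hyperplanes in the \(\mathbb{F}_p\)-vector space \(V = F^\times/(F^\times)^p\). This space is finite in characteristic \(\neq p\). In the local reciprocity isomorphism \(F^\times/N_{L/F}L^\times \cong \Gal(L/F)\), distinct subgroups give distinct cyclic extensions. So it suffices to show \(\dim_{\mathbb{F}_p} V \ge 2\). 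A vector space of dimension \(k\ge 2\) over \(\mathbb{F}_p\) has \((p^k-1)/(p-1)\ge p+1\) hyperplanes.

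To see \(\dim V\ge 2\), use the decomposition \(F^\times \cong \pi^{\mathbb{Z}} \times \mathcal{O}^\times\). The factor \(\pi^{\mathbb{Z}}\) contributes one copy of \(\mathbb{Z}/p\). This corresponds to ramified-type directions, and also gives the unique unramified degree \(p\) extension.

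For the unit group, split into cases. If \(p \neq \mathrm{char}\) of the residue field \(k\), then \(\mathcal{O}^\times/(\mathcal{O}^\times)^p \cong k^\times/(k^\times)^p\), using Hensel's lemma on the principal units, which are \(p\)-divisible. This is nontrivial exactly when \(p \mid q-1\). If \(p \nmid q-1\), this gives only dimension \(1\), so a separate argument is needed there (see below). If \(p\) equals the residue characteristic, the principal units \(U^{(1)}\) are a \(\mathbb{Z}_p\)-module of infinite rank in characteristic \(p\), and of rank \([F:\mathbb{Q}_p]\ge 1\) in characteristic \(0\). In either subcase \(U^{(1)}/(U^{(1)})^p\) is nonzero, giving \(\dim V\ge 2\).

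The main obstacle is the tame case \(p\nmid q(q-1)\). There \(V\) has dimension \(1\), so there is only one abelian degree \(p\) extension, namely the unramified one. We must therefore use non-Galois extensions. Take \(L = F(\sqrt[p]{u\pi})\) for varying units \(u\). Each is totally ramified of degree \(p\) by Eisenstein's criterion. Since \(\mu_p\not\subset F\), these are not Galois. Their \(p\) conjugates \(F(\zeta^i\sqrt[p]{u\pi})\) are pairwise distinct subfields of \(F^s\). Indeed, if two coincided, the field would contain \(\zeta\), which has degree \(>1\) dividing \(p-1\) over \(F\). That is impossible inside a degree \(p\) extension.

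So already \(F(\sqrt[p]{\pi})\) gives \(p\) distinct degree \(p\) extensions inside \(F^s\). Adding the unramified extension of degree \(p\) gives at least \(p+1\). This is the step I would verify most carefully: distinctness of the conjugates, via the degree of \(F(\zeta_p)/F\) being coprime to \(p\) and \(>1\). The same conjugate trick could also uniformly handle the other cases, but the class field theory count suffices there.
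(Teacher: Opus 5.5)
Your proof is correct, but it divides the cases differently from the paper. The paper splits by $\mathrm{char}(F)$ and by whether $\zeta_p\in F$:
\begin{itemize}
\item in characteristic $p$ it uses Artin--Schreier theory;
\item when $\mathrm{char}(F)\neq p$ and $\zeta_p\in F$ it uses Kummer theory and $\dim_{\mathbb{F}_p}F^\times/(F^\times)^p\ge 2$;
\item when $\mathrm{char}(F)\neq p$ and $\zeta_p\notin F$ it uses exactly your argument: the $p$ conjugate fields $F(\alpha_i)$ with $\alpha_i^p=\pi$, plus the unramified extension.
\end{itemize}
You instead count cyclic degree $p$ extensions uniformly through the local existence theorem. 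You split by residue characteristic and by whether $p\mid q-1$, so the conjugate trick is only needed when $p\nmid q(q-1)$. Your route costs local class field theory, whereas the paper's is elementary. In return it explains why non-Galois extensions are unavoidable: in that tame case there is only one cyclic extension of degree $p$, the unramified one. In the rest of the paper's third case (residue characteristic $p$, $\zeta_p\notin F$, e.g.\ $\mathbb{Q}_3$ with $p=3$), cyclic extensions alone already suffice.

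Two small corrections.

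First, in characteristic $p$ the subgroup $(F^\times)^p$ is not open and $V$ is infinite-dimensional. Index-$p$ norm groups therefore correspond only to \emph{open} hyperplanes of $V$, so ``$\dim V\ge 2$'' is not enough by itself. One way to close this:
\begin{itemize}
\item note that $U^{(1)}/U^{(N)}$ is a nontrivial finite $p$-group for $N\ge 2$ (or that $U^{(1)}\cong\mathbb{Z}_p^{\mathbb{N}}$ topologically);
\item this gives an open ramified subgroup of index $p$;
\item together with the unramified norm group, it gives a quotient $(\mathbb{Z}/p)^2$ of $F^\times$;
\item that quotient has $p+1$ index-$p$ subgroups, and their preimages are all open.
\end{itemize}
Alternatively, just use Artin--Schreier as the paper does.

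Second, your closing remark that the conjugate trick could handle the other cases uniformly is wrong. If $\zeta_p\in F$ (whenever $p\mid q-1$, and always for $p=2$ in characteristic $\neq 2$), then $F(\sqrt[p]{\pi})/F$ is Galois and gives only one field. In characteristic $p$ the polynomial $x^p-\pi$ is inseparable. This is exactly why the paper needs its first two cases.
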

\begin{proof}
The proof naturally divides into three cases depending on the characteristic of \(F\) and the presence of \(p\)-th roots of unity.\\
Case 1: \(\operatorname{char}(F)= p\). \\
In this case, \(F \cong \mathbb{F}_q((t))\), where \(q\) is a power of \(p\). By Artin--Schreier theory, every cyclic extension of degree \(p\) arises from an equation of the form \(x^p-x-a\), \(a \in F\). Such extensions correspond to the nontrivial \(1\)-dimensional subspaces of the \(\mathbb{F}_p\)-vector space \(F / \wp(F)\), where \(\wp(x)=x^p-x\). Elements of the form \(c t^{-k}\), with \(c \in \mathbb{F}_q^\times\) and \(k>0\) coprime to \(p\), yield linearly independent classes in \(F/\wp(F)\). Hence this vector space is infinite-dimensional, and in particular there are infinitely many degree \(p\) extensions.\\
Case 2: \(\operatorname{char}(F)\neq p\) and \(\zeta_p \in F\).\\
By Kummer theory, degree \(p\) extensions correspond to 1-dimensional subspaces of the \(\mathbb{F}_p\)-vector space \( F^\times / (F^\times)^p\). Using the structure of the multiplicative group of a local field (see \cite[Proposition 5.7]{neukirch2010algebraic}), we get \(\dim_{\mathbb{F}_p}\bigl(F^\times / (F^\times)^p\bigr) \ge 2\). Hence, the number of degree \(p\) extensions is at least \(p^2-1/p-1\ = p+1\).\\
Case 3: \(\operatorname{char}(F)\neq p\) and \(\zeta_p \notin F\). \\
There exists a unique unramified extension of degree \(p\). Let \(\pi\) be a uniformizer of \(F\). The Eisenstein polynomial \(f(x)=x^p-\pi\) is irreducible over \(F\). Let \(\alpha_1,\dots,\alpha_p\) be its roots in \(F^s\). Each field \(F(\alpha_i)\) is a totally ramified extension of degree \(p\). If \(F(\alpha_i)=F(\alpha_j)\) for some \(i \ne j\), then \(\alpha_i / \alpha_j = \zeta_p^k \in F(\alpha_i) \) for some \(1 \leq k \leq p-1\). Thus
\(F(\zeta_p^k) \subseteq F(\alpha_i)\). Since \([F(\alpha_i):F]=p\), we must have \([F(\zeta_p^k):F] \mid p\). On the other hand, \([F(\zeta_p^k):F]\mid (p-1)\), so it must be \(1\), contradicting \(\zeta_p \notin F\). Thus the fields \(F(\alpha_i)\) are pairwise distinct, giving \(p+1\) distinct degree \(p\) extensions, including the unramified extension.
\end{proof}

We now recall some basic facts about Brauer groups of global fields. 
\begin{theorem}\cite[Theorem 1.5.36]{poonen2023rational}\label{ABHN}
Let \(F\) be a global field. For each place \(v\) of \(F\), let \(F_v\) denote the completion of \(F\), and let \(inv_v:\Br\,(F_v) \to \mathbb{Q}/\mathbb{Z}\) be the invariant map.

\begin{enumerate}
\item Then the sequence
\[
0 \longrightarrow \Br \,(F) \longrightarrow \bigoplus_{v} \Br \,(F_v)
\xrightarrow{\ \sum \operatorname{inv}_v\ } \mathbb{Q}/\mathbb{Z} \longrightarrow 0
\]
is exact.
\item Every element of \(\Br\, (F)\) has period equal to index.
\end{enumerate}
\end{theorem}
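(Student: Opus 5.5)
This is the Albert--Brauer--Hasse--Noether theorem, and I would prove it with global class field theory in its cohomological form. First, I would reduce to finite Galois levels. Write \(\Br(F)=\varinjlim_L H^2(\Gal(L/F),L^\times)\) over finite Galois \(L/F\) inside \(F^s\). For each such \(L\), take the idele sequence \(0\to L^\times\to \mathbb{I}_L\to C_L\to 0\) of \(\Gal(L/F)\)-modules and its long exact cohomology sequence. Three inputs are needed:
\begin{itemize}
\item \(H^1(\Gal(L/F),C_L)=0\), which is the second inequality;
\item \(H^2(\Gal(L/F),\mathbb{I}_L)\cong\bigoplus_v H^2(\Gal(L_w/F_v),L_w^\times)\), by Shapiro's lemma together with the vanishing of cohomology of unramified units at almost all places;
\item \(H^2(\Gal(L/F),C_L)\cong\frac{1}{[L:F]}\mathbb{Z}/\mathbb{Z}\), with the map from the local groups given by the sum of the local invariants.
\end{itemize}
Together these give an exact sequence \(0\to \Br(L/F)\to\bigoplus_v\Br(L_w/F_v)\xrightarrow{\sum\operatorname{inv}_v}\frac{1}{[L:F]}\mathbb{Z}/\mathbb{Z}\). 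Passing to the direct limit over \(L\) gives injectivity and exactness in the middle of part (1). Surjectivity of \(\sum\operatorname{inv}_v\) onto \(\mathbb{Q}/\mathbb{Z}\) is easy: choose a single place \(v\) and a local class with prescribed invariant there, together with its negative at a second place.

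The main obstacle is the global invariant computation \(H^2(\Gal(L/F),C_L)\cong\frac{1}{[L:F]}\mathbb{Z}/\mathbb{Z}\). I would establish it by proving the two inequalities of class field theory:
\begin{itemize}
\item the first inequality, via a Herbrand quotient computation for cyclic extensions;
\item the second inequality, via Kummer theory plus an analytic or algebraic density argument.
\end{itemize}
I would then deduce the reciprocity law \(\sum_v\operatorname{inv}_v=0\) on \(\Br(F)\). For this I would check it first on cyclotomic (or cyclic cyclotomic-split) algebras by explicit computation, and then use the fact that every Brauer class is split by a cyclic cyclotomic extension. Since the paper only cites this result, in the write-up I would take class field theory as a black box at exactly this point.

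For part (2), fix \(\alpha\in\Br(F)\) of period \(m\). Each local invariant \(\operatorname{inv}_v(\alpha)\) lies in \(\frac1m\mathbb{Z}/\mathbb{Z}\) and is nonzero at only finitely many places \(S\). I would choose a cyclic extension \(L/F\) of degree \(m\) whose local degree at each \(v\in S\) is a multiple of the denominator of \(\operatorname{inv}_v(\alpha)\). By the Grunwald--Wang theorem this exists, after avoiding the special case; if needed, take degree \(2m\) or adjust only at \(2\)-adic places, and then use the fact that the local condition is only about degrees. Then \(\operatorname{Res}_{L/F}\alpha\) has all local invariants zero, using \(\operatorname{inv}_{L_w}\circ\operatorname{Res}=[L_w:F_v]\operatorname{inv}_v\). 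By injectivity in part (1), \(L\) splits \(\alpha\), so \(\alpha\) is represented by a cyclic algebra \((L/F,\sigma,a)\) of degree \(m\). Hence \(\mathrm{ind}(\alpha)\mid m\), and since the period always divides the index, the period equals the index.
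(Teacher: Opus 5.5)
\textbf{Comparison with the paper.} The paper does not prove this statement. It imports it from Poonen's book as a black box, and uses it only to know that a Brauer class over a global field is locally trivial at almost all places and has period equal to index. Your outline is the standard argument behind that citation: cohomological global class field theory via the id\`ele class sequence for (1), and Hasse's construction of a splitting field of degree equal to the period for (2). So the paper offers no competing route. What your sketch makes explicit is that (2) reduces to (1) over $L$ plus an existence theorem for global extensions with prescribed local degrees. That is the same kind of input (Theorem~\ref{Conrad}) the paper itself uses in Lemma~\ref{local-global}.

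\textbf{Steps that need repair.}

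\emph{Surjectivity in (1).} Your argument as written fails. A family with invariant $x$ at $v$ and $-x$ at a second place has $\sum_v\operatorname{inv}_v=0$; that is the construction of a global class of a given period, not a proof of surjectivity. Instead take one non-archimedean place $v$, where $\operatorname{inv}_v$ is an isomorphism onto $\mathbb{Q}/\mathbb{Z}$. Put invariant $x$ there and $0$ at every other place.

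\emph{The special case in (2).} Your fallback ``take degree $2m$'' does not work. A cyclic splitting field of degree $2m$ only gives $\mathrm{ind}(\alpha)\mid 2m$. What you need is a cyclic extension of degree exactly $m$ whose local degree at each $v\in S$ is divisible by the order of $\operatorname{inv}_v(\alpha)$; at real $v\in S$ this means $L_w=\mathbb{C}$. Such an extension exists with no exception. Wang's obstruction only forbids realizing particular local extensions: for $F=\mathbb{Q}$ and $m=8$, no cyclic degree-$8$ extension has completion at $2$ equal to the unramified degree-$8$ extension of $\mathbb{Q}_2$. Yet the maximal real subfield of $\mathbb{Q}(\zeta_{32})$ is cyclic of degree $8$ with local degree $8$ at $2$. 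So in the special case, choose the local extensions at the exceptional dyadic places so that the obstruction condition holds. This is possible because only their degrees matter. Your phrase ``adjust only at $2$-adic places'' points at this; make it the actual argument rather than an aside.

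\emph{Minor points.} $H^1(\Gal(L/F),C_L)=0$ is not the second inequality by itself. It comes from combining the Herbrand quotient $h(C_L)=[L:F]$ for cyclic $L$ with the norm-index bound, then passing to arbitrary Galois $L$ by the usual Sylow/solvability argument. Also, since the statement covers all global fields, the Kummer-theory and cyclotomic steps must be replaced for function fields by Artin--Schreier theory (for the $p$-part) and constant field extensions.
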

Since \(\Br\,(F)\) injects into \(\bigoplus_{v} \Br \, (F_v)\), it follows that for any \(A \in \Br \, (F)\), the class \(A \otimes_F F_v \in \Br \, (F_v)\) is trivial for all but finitely many places \(v\) of \(F\). In particular, since \(\mathrm{ind}(A)=\mathrm{per}(A)\), the set of places \(v\) of \(F\) for which \(\mathrm{ind}(A \otimes_F F_v) \neq 1\) is finite.
As a consequence, we obtain the following corollary.
\begin{corollary}\cite[Corollary 18.6]{pierce2012associative}\label{global index is lcm of local}
Let \(F\) be a global field and let \(A \in \Br(F)\). Then the index of \(A\) is given by \(\mathrm{ind}(A) = lcm\{\mathrm{ind}(A \otimes_F F_v)\}\), where \(v\) runs over all places of \(F\).
\end{corollary}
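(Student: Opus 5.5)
We need to prove that \(\mathrm{ind}(A)=\operatorname{lcm}_v \mathrm{ind}(A\otimes_F F_v)\) for a global field \(F\). The plan is to prove the two divisibilities separately, using only Theorem~\ref{ABHN} and the local behaviour of invariants recalled above.

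\emph{Easy divisibility.} For every place \(v\), base change cannot increase the index, so \(\mathrm{ind}(A\otimes_F F_v)\mid \mathrm{ind}(A)\). Hence the lcm divides \(\mathrm{ind}(A)\). This lcm is a well-defined positive integer: by the remark after Theorem~\ref{ABHN}, only finitely many local indices differ from \(1\), and all of them divide \(\mathrm{ind}(A)\).

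\emph{Reverse divisibility.} Set \(N=\operatorname{lcm}_v \mathrm{ind}(A\otimes_F F_v)\). By Theorem~\ref{ABHN}(2), \(\mathrm{ind}(A)=\mathrm{per}(A)\), so it suffices to show \(N\cdot[A]=0\) in \(\Br(F)\). For each place \(v\) we have \(\mathrm{per}(A\otimes F_v)\mid \mathrm{ind}(A\otimes F_v)\mid N\), so \(N\cdot[A\otimes_F F_v]=0\) in \(\Br(F_v)\). The localisation map \(\Br(F)\to\bigoplus_v \Br(F_v)\) is an injective group homomorphism by Theorem~\ref{ABHN}(1). The image of \(N[A]\) is zero, so \(N[A]=0\). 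Therefore \(\mathrm{ind}(A)=\mathrm{per}(A)\mid N\).

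The only substantive input is the period–index equality over global fields, which we take from Theorem~\ref{ABHN}(2); the argument itself has no real obstacle. At archimedean places the local index is \(1\) or \(2\), and the argument above covers these places without change.
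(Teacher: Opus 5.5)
Your proof is correct and follows the paper's route: the paper presents this corollary (citing Pierce) as a direct consequence of Theorem~\ref{ABHN}. It combines injectivity of \(\Br(F)\to\bigoplus_v \Br(F_v)\) with the global equality \(\mathrm{per}(A)=\mathrm{ind}(A)\), and your two divisibilities spell out exactly that argument.
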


The following theorem ensures that prescribed finite extensions of completions at finitely many places can be realized globally. After fixing separable closures of \(F\) and of each \(F_{v_i}\), together with embeddings \(\phi_i : F^s \hookrightarrow F_{v_i}^s\), we identify the resulting isomorphic completions, and hence write equalities in place of the isomorphisms appearing in the original theorem.

\begin{theorem}[Conrad, \cite{ConradLocalGlobal}]\label{Conrad}
Let \(F\) be a global field and let \(v_1,\dots,v_n\) be inequivalent places of \(F\). For each \(i\), let \(F_{v_i}\) be the completion of \(F\) at \(v_i\), fix a separable closure \(F_{v_i}^s\) of \(F_{v_i}\), and choose an embedding \(\phi_i : F^s \hookrightarrow F_{v_i}^s\). Let \(K_i/F_{v_i}\) be finite separable extensions. Then there exists a finite separable extension \(K/F\) and places \(w_i\) of \(K\) lying over \(v_i\) such that the natural embedding of \(K_{w_i}\) into \(F_{v_i}^s\) identifies \(K_{w_i}\) with \(K_i\) for each \(i\). Moreover, if \([K_i : F_{v_i}] = d\) for all \(i\), then \(K/F\) can be chosen so that \([K : F] = d\).
\end{theorem}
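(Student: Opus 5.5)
The statement is a weak approximation / Krasner-type realization result. I would prove it by approximating local defining polynomials by a single global polynomial.

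\emph{Choice of local polynomials.} For each \(i\), choose a primitive element of \(K_i/F_{v_i}\) whose monic minimal polynomial \(f_i\in F_{v_i}[x]\) is separable of degree \(d_i=[K_i:F_{v_i}]\). To handle the general case I would put all the \(f_i\) in a common degree \(N\). Set \(N=\operatorname{lcm}\) (or any common multiple) of the \(d_i\), and replace \(f_i\) by a separable polynomial of degree \(N\) that factors as \(f_i\) times auxiliary factors. When all \(d_i=d\), no padding is needed and \(N=d\).

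\emph{Global approximation.} By weak approximation (independence of inequivalent valuations, i.e.\ the approximation theorem for \(F\) dense in \(\prod F_{v_i}\)), choose a monic \(g\in F[x]\) of degree \(N\) whose coefficients are arbitrarily close to those of \(f_i\) in \(F_{v_i}\), simultaneously for every \(i\). By continuity of roots together with Krasner's lemma, if \(g\) is close enough to \(f_i\), then \(g\) is separable over \(F_{v_i}\). Moreover, each root \(\alpha\) of \(f_i\) has a unique nearby root \(\beta\) of \(g\) with \(F_{v_i}(\beta)=F_{v_i}(\alpha)\) inside \(F_{v_i}^s\), and the factorization pattern of \(g\) over \(F_{v_i}\) matches that of \(f_i\). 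In the equal-degree case, \(g\) is irreducible over \(F_{v_1}\), hence irreducible over \(F\). Set \(K=F[x]/(g)\), so \([K:F]=d\). The places of \(K\) above \(v_i\) correspond to the irreducible factors of \(g\) over \(F_{v_i}\), and the completion at the place \(w_i\) coming from the root \(\beta\) is \(F_{v_i}(\beta)\). Via the embedding \(\phi_i\), that root is identified with a root near the chosen \(\alpha\), so the completion is \(K_i\) itself, as a subfield of \(F_{v_i}^s\).

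\emph{General case.} Here \(g\) might be reducible over \(F\). I would pick an irreducible factor, or first arrange irreducibility by also approximating, at an auxiliary place \(v_0\) outside \(\{v_1,\dots,v_n\}\), an irreducible polynomial of degree \(N\) over \(F_{v_0}\). Such a polynomial exists since local fields have extensions of every degree, for example unramified ones. This forces \(g\) irreducible over \(F\), and \(K=F[x]/(g)\) then has a place above \(v_i\) with completion \(K_i\).

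\emph{Main obstacle.} The main difficulty is the Krasner-lemma step in positive characteristic and at archimedean places. Separability of \(f_i\) makes Krasner's lemma applicable. At archimedean places \(K_i\) is \(\mathbb{R}\) or \(\mathbb{C}\), and the argument reduces to continuity of roots. A further point of care is matching the identification through \(\phi_i\): the root of \(g\) must be chosen in \(F^s\) so that its image under \(\phi_i\) lies near the prescribed \(\alpha\). Composing with an automorphism fixes this, since the completion \(K_{w_i}\) depends only on the image field.
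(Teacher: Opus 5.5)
Your argument is the standard one: approximate the local minimal polynomials by a global $g$ using weak approximation, then apply Krasner's lemma. This is how the result of Conrad that the paper cites is proved; the paper gives nothing beyond the citation.

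The core is sound. Closeness to the separable $f_i$ gives three things:
\begin{enumerate}
\item separability of $g$;
\item a factorization of $g$ over $F_{v_i}$ matching that of $f_i$;
\item $F_{v_i}(\beta)=F_{v_i}(\alpha)$ for the root $\beta$ of $g$ near a root $\alpha$ of $f_i$.
\end{enumerate}
Irreducibility of $g$ over $F_{v_1}$ (or over an auxiliary place $v_0$) then gives irreducibility over $F$. In the general case only your auxiliary-place option works. Taking an irreducible factor of $g$ over $F$ fails with several places, because the local factors near the different $f_i$ can lie in different global factors. Also, $v_0$ must be non-archimedean so that a degree $N$ extension exists there.

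The step that does not hold as written is the final matching through the fixed $\phi_i$. Write $\alpha_i$ for the chosen root of $f_i$ generating $K_i$. Once you fix $K=F(\theta)\subset F^s$, the embedding $K_{w_i}\hookrightarrow F_{v_i}^s$ induced by $\phi_i$ has image $F_{v_i}(\phi_i(\theta))$. This equals $F_{v_i}(\alpha')$ for whichever root $\alpha'$ of $f_i$ lies near $\phi_i(\theta)$, and $\alpha'$ need not be $\alpha_i$.

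You need a single $\theta$ that works for every $i$ at once, and "composing with an automorphism" does not supply it:
\begin{itemize}
\item Replacing $\theta$ by $\tau\theta$ with $\tau\in\Gal(F^s/F)$ moves the image at all places simultaneously.
\item Applying a local automorphism $\sigma$ of $F_{v_i}^s$ replaces $K_i$ by $\sigma(K_i)$. This is a different subfield of $F_{v_i}^s$ when $K_i/F_{v_i}$ is not Galois.
\end{itemize}
For a given $g$, the root whose $\phi_1$-image is near $\alpha_1$ and the root whose $\phi_2$-image is near $\alpha_2$ can simply be different roots. If the $K_i$ have no nontrivial automorphisms, each is the only root that works at its place, so no choice of $\theta$ works.

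What your argument actually proves is $K_{w_i}\cong K_i$ over $F_{v_i}$. Equivalently, you get equality inside $F_{v_i}^s$ after choosing the $F$-embedding $K\hookrightarrow F^s$ separately for each $i$. That is the form in which Conrad's theorem holds, and the paper's ``equalities'' come precisely from identifying these isomorphic completions. You should state your conclusion in that form. Literal equality for one embedded copy of $K$ is automatic when every $K_i/F_{v_i}$ is Galois, or when $n=1$. Otherwise it needs an additional argument that your proposal does not provide.
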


\begin{lemma}\label{local-global}
Let \(F\) be a local or global field, fix a separable closure \(F^s\), and let \(A \in \Br(F)\) be of index \(p^m\). Suppose \(L_0/F\) and \(L_1/F\) are distinct extensions of degree \(p\) such that \(\mathrm{ind}(A_{L_0})=\mathrm{ind}(A_{L_1})=p^{m-1}\). Then there exists a degree \(p\) extension \(K/F\), distinct from both \(L_0\) and \(L_1\), such that
\[
\mathrm{ind}(A_K)=p^{m-1} \quad \text{and} \quad \mathrm{ind}(A_{K L_i})=p^{m-2} \text{ for } i=0,1.
\]
Moreover, for each \(0 \le k \le m\), there exists an extension \(K'/F\) of degree \(p^k\) such that \(\mathrm{ind}(A_{K'}) = p^{m-k}\).
\end{lemma}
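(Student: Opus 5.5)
Note first that the first assertion needs \(m\ge 2\), since otherwise \(p^{m-2}\) makes no sense; I will read it that way. I treat the local case directly and then reduce the global case to finitely many local problems.

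\textbf{Local case.} Let \(F\) be a non-archimedean local field. By \(\operatorname{inv}_L\circ\operatorname{Res}_{L/F}=[L:F]\operatorname{inv}_F\), for every finite extension \(L/F\) we have \(\mathrm{ind}(A_L)=p^m/\gcd(p^m,[L:F])\), using that period equals index over local fields. So every degree \(p\) extension \(K\) has \(\mathrm{ind}(A_K)=p^{m-1}\).

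If \(K\neq L_i\), both have degree \(p\), so \(K\cap L_i=F\). Hence \([KL_i:F]=p^2\) (take \(L_i\) separable; in characteristic \(p\), use separable degree-\(p\) extensions, which is what Lemma~\ref{atleast 3 distinct deg p} provides). Since \(m\ge 2\), this gives \(\mathrm{ind}(A_{KL_i})=p^{m-2}\).

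By Lemma~\ref{atleast 3 distinct deg p} there are at least \(p+1\ge 3\) degree \(p\) extensions, so a \(K\) distinct from \(L_0,L_1\) exists. For the ``moreover'' part, take \(K'\) to be the unramified extension of degree \(p^k\).

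The archimedean local case is trivial. Over \(\mathbb R\) the only possibility is \(p=2\), \(m=1\), and then there are no two distinct degree-\(2\) extensions \(L_0,L_1\). Over \(\mathbb C\) the hypothesis is vacuous.

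\textbf{Global case, construction.} Let \(S\) be the finite set of places with \(\mathrm{ind}(A_{F_v})\neq 1\). By Corollary~\ref{global index is lcm of local}, there is a place \(v_0\in S\) with \(\mathrm{ind}(A_{F_{v_0}})=p^m\). The plan is to pick local degree \(p\) extensions at the places of \(S\) and glue them with Theorem~\ref{Conrad}.

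The delicate point is the compositum \(KL_i\). Its completions at places above \(v\) are the composita \(K_w\cdot (L_i)_u\) for the various places \(u\) of \(L_i\) above \(v\), and \(L_i\otimes F_v\) may be a product of several fields. So at each \(v\in S\) I choose a local degree \(p\) extension \(K_v/F_v\), inside the fixed \(F_v^s\), that is distinct from every degree \(p\) field factor of \(L_0\otimes F_v\) and \(L_1\otimes F_v\).

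\textbf{Main obstacle: counting and verification.} The choice above forbids possibly more than \(p+1\) fields, so Lemma~\ref{atleast 3 distinct deg p} alone may not suffice. I will try to avoid this by working only with the single embedding \(\phi_v\), i.e.\ the one component singled out by Theorem~\ref{Conrad}. Failing that, I will use the stronger counts from the proof of Lemma~\ref{atleast 3 distinct deg p}: infinitely many extensions in characteristic \(p\), and roughly \(p^{[F:\mathbb Q_p]+1}\) many Kummer extensions. Another option is to use a place outside \(S\) together with \(v_0\) to force \(K\neq L_i\).

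Theorem~\ref{Conrad} then gives \(K/F\) of degree \(p\) with \(K_{w_v}=K_v\) for all \(v\in S\). It remains to check the indices.

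\(\mathrm{ind}(A_K)\): at \(v_0\) the local degree is \(p\), so by the local formula and Corollary~\ref{global index is lcm of local} we get \(\mathrm{ind}(A_K)=p^{m-1}\). The upper bound \(\mathrm{ind}(A_K)\le p^{m-1}\) is automatic, because every completion \(K_w\) either has degree \(p\) over \(F_v\) or \(A_{F_v}\) already has index at most \(p^{m-1}\).

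\(\mathrm{ind}(A_{KL_i})\): since \([KL_i:F]=p^2\), we have \(\mathrm{ind}(A_{KL_i})\ge p^{m-2}\). For the reverse bound, at each place of \(S\) the local compositum has degree \(p^2\) over \(F_v\) when \((L_i)_u\neq F_v\), and otherwise one uses that \(A_{F_v}\) already has small index. I expect this bookkeeping to be the hardest part. The strategy is to ensure, via the local choices and the lcm formula of Corollary~\ref{global index is lcm of local}, that every completion of \(A_{KL_i}\) has index dividing \(p^{m-2}\).

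\textbf{Global case, the ``moreover'' part.} Apply Theorem~\ref{Conrad} with degree-\(p^k\) unramified local extensions at all \(v\in S\). Then every completion of \(A_{K'}\) has index dividing \(p^{m-k}\), and at \(v_0\) the index is exactly \(p^{m-k}\). By Corollary~\ref{global index is lcm of local}, \(\mathrm{ind}(A_{K'})=p^{m-k}\).
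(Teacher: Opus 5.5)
Your plan is the paper's plan: locally, pick \(K\neq L_0,L_1\) via Lemma~\ref{atleast 3 distinct deg p} and use \(\operatorname{inv}_L\circ\operatorname{Res}_{L/F}=[L:F]\operatorname{inv}_F\); globally, patch local choices over \(S\) with Theorem~\ref{Conrad} and apply Corollary~\ref{global index is lcm of local}. It breaks at the same step as the paper's proof: ``\(K\neq L_i\), so \(K\cap L_i=F\), hence \([KL_i:F]=p^2\).'' A trivial intersection gives \([KL_i:F]=[K:F][L_i:F]\) only under linear disjointness, for example when one of the two fields is Galois over \(F\). For non-normal degree-\(p\) extensions it fails.

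This cannot be repaired in the global bookkeeping, because the first assertion of the lemma is false. Take \(F=\mathbb{Q}_2\), \(p=3\) and \(\operatorname{inv}(A)=1/9\), so \(m=2\). Let \(L_0\) be the unramified cubic extension and \(L_1=\mathbb{Q}_2(\sqrt[3]{2})\); then \(\mathrm{ind}(A_{L_i})=3\) for both. Every cubic extension of \(\mathbb{Q}_2\) is tame, hence either unramified or generated by a cube root of \(2u\) with \(u\in\mathbb{Z}_2^\times\). Since \(\mathbb{Z}_2^\times\) is uniquely \(3\)-divisible, the cubic extensions inside \(\overline{\mathbb{Q}}_2\) are exactly \(L_0\) and \(\mathbb{Q}_2(\omega^j\sqrt[3]{2})\) for \(j=0,1,2\), so the count \(p+1\) is sharp here. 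Hence \(K=\mathbb{Q}_2(\omega^j\sqrt[3]{2})\) with \(j\in\{1,2\}\). Then \(KL_1=\mathbb{Q}_2(\sqrt[3]{2},\omega)\) has degree \(6\), so \(\operatorname{inv}(A_{KL_1})=6/9\) and \(\mathrm{ind}(A_{KL_1})=3\neq p^{m-2}=1\).

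The global statement fails for the same reason. Over \(\mathbb{Q}\), let \(A\) have invariants \(1/9\) at \(2\) and \(8/9\) at \(3\). Let \(L_0=\mathbb{Q}(\zeta_7)^+\), which is inert at \(2\) and \(3\), and \(L_1=\mathbb{Q}(\sqrt[3]{2})\), which is totally ramified at \(2\) and \(3\); then \(\mathrm{ind}(A_{L_i})=3\). For any cubic \(K/\mathbb{Q}\), requiring every completion of \(KL_0\) above \(2\) to have degree divisible by \(9\) forces \(K\otimes\mathbb{Q}_2\) to be a ramified cubic field. Its composita with the conjugates of \(\mathbb{Q}_2(\sqrt[3]{2})\) have degree \(3\) or \(6\), so \(\mathrm{ind}(A_{KL_i})\ge 3\) for some \(i\).

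The same example shows why none of your three workarounds can succeed:
\begin{enumerate}
\item The defect already occurs for a single fixed embedding \(\phi_v\).
\item When the residue characteristic is prime to \(p\) and \(\zeta_p\notin F_v\), there are exactly \(p+1\) degree-\(p\) extensions, so no better count is available.
\item An auxiliary place outside \(S\) cannot change the local degree at \(v_0\), which is where the index has to drop.
\end{enumerate}

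What the argument does prove is more limited. Locally, it works if \(K\) is chosen cyclic over \(F\). Such a \(K\) distinct from \(L_0,L_1\) exists unless the residue characteristic is prime to \(p\), \(\zeta_p\notin F\), and one \(L_i\) is the unramified extension, which is exactly the configuration above. Over a \(p\)-special base, the setting in which Lemma~\ref{p-special connectedness-global} uses this lemma, \([KL_i:F]\) is a power of \(p\) larger than \(p\) and at most \(p^2\), hence equal to \(p^2\). So the lemma has to be reformulated rather than proved as stated.

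Your ``moreover'' part is fine at non-archimedean places. However, if \(p=2\) and \(S\) contains a real place, there is no extension of \(\mathbb{R}\) of degree \(2^k\) for \(k\ge 2\). There you must prescribe an \'etale algebra such as \(\mathbb{C}^{2^{k-1}}\), which needs an \'etale-algebra version of Theorem~\ref{Conrad}; the paper's proof has the same omission.
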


\begin{proof}
Suppose \(F\) is a local field. Let \(K/F\) be a degree \(p\) extension distinct from both \(L_0/F\) and \(L_1/F\) inside \(F^s\), which exists by Lemma~\ref{atleast 3 distinct deg p}. Then \([KL_0:F]=p^2=[KL_1:F]\). For any finite extension \(L/F\), the relation \(\operatorname{inv}_L \circ \operatorname{Res}_{L/F} = [L:F]\cdot \operatorname{inv}_F\) implies that \(\mathrm{ind}(A_L) = \mathrm{ind}(A)/[L:F]\). Thus we have \(\mathrm{ind}(A_K)= \mathrm{ind}(A)/[K:F]=p^{m-1}\) and \(\mathrm{ind}(A_{K L_i})= \mathrm{ind}(A)/[KL_i:F]= p^{m-2}\) for \(i=0,1\). For the second statement, for a fixed \(0 \le k \le m\), let \(K'/F\) be any extension of degree \(p^k\). By compatibility of restriction and invariant maps, we have \(\mathrm{ind}(A_{K'}) = \mathrm{ind}(A)/[K':F]=p^{m-k}\).\\
Now suppose \(F\) is a global field. Let \(L_0=F(\alpha)\) and \(L_1=F(\beta)\) for some \(\alpha,\beta\) separable over \(F\). By Corollary~\ref{global index is lcm of local}, we have \(\mathrm{ind}(A_{F_v}) \mid p^m\) for all places \(v\) of \(F\), and \(\mathrm{ind}(A_{F_{v_0}})=p^m\) for some place \(v_0\). Let \(S=\{\, v \in \Omega_F \mid \mathrm{ind}(A_{F_v}) \neq 1 \,\}\). By Theorem~\ref{ABHN}, the set \(S\) is finite. For each \(v \in S\), let \(F_{v}\) be the completion of \(F\) at \(v\), fix a separable closure \(F_{v}^s\) of \(F_{v}\), and choose an embedding \(\phi_v : F^s \hookrightarrow F_{v}^s\). For each \(v \in S\) such that at least one of \(L_0 \otimes_F F_v\) or \(L_1 \otimes_F F_v\) is a field extension of \(F_v\) of degree \(p\), choose a degree \(p\) extension \(K_v/F_v\) inside \(F_v^s\), distinct from both \(L_0 \otimes_F F_v\) and \(L_1 \otimes_F F_v\); this is possible by Lemma~\ref{atleast 3 distinct deg p}. In particular, \(K_{v_0}/F_{v_0}\) is distinct from both \(L_0 \otimes_F F_{v_0}\) and \(L_1 \otimes_F F_{v_0}\) inside \(F_{v_0}^s\). For the remaining places \(v \in S\), choose any degree \(p\) extension \(K_v/F_v\).\\
By Theorem~\ref{Conrad}, there exists a global extension \(K=F(\gamma)/F\) of degree \(p\) and, for each \(v \in S\), a place \(w\) of \(K\) lying over \(v\) such that the natural embedding of \(K_w\) into \(F_v^s\) identifies \(K_w\) with \(K_v\). By local invariant theory, for each such place \(w\) above \(v \in S\), we have \(\mathrm{ind}(A_{K_w}) = \mathrm{ind}(A_{F_v})/p\), so \(\mathrm{ind}(A_{K_w}) \mid p^{m-1}\), with equality at the place above \(v_0\). Hence \(\mathrm{ind}(A_K)=p^{m-1}\). By construction, \(K/F\) is distinct from both \(L_0/F\) and \(L_1/F\), since their completions differ inside \(F_{v_0}^s\). Thus \([KL_i:F]=p^2\) for \(i=0,1\), and the same local argument gives \(\mathrm{ind}(A_{KL_i})=p^{m-2}\).\\
For the second statement, fix \(0 \le k \le m\). For each \(v \in S\), choose an extension \(K'_v/F_v\) of degree \(p^k\) inside \(F_v^s\). By Theorem~\ref{Conrad}, there exists a global extension \(K'/F\) of degree \(p^k\) such that \(K' \otimes_F F_v = K'_v\) for all \(v \in S\). Then \(\mathrm{ind}(A_{K'_w}) = \mathrm{ind}(A_{F_v})/p^k\) for each place \(w\) of \(K'\) above \(v \in S\), and hence \(\mathrm{ind}(A_{K'})=p^{m-k}\).
\end{proof}

\section{Triviality over local and global fields}\label{proof of Vanishing theorem}
In this final section, we prove that \(\mathrm{A_0}(X)=0\) for Severi--Brauer flag varieties \(X\) over local and global fields (Theorem~\ref{vanishing over local and global}). We focus on the group \(\mathrm{A_0}(\mathrm{SB}_{p^k}(D))\) for a central division algebra \(D\) of degree \(p^m\) over \(F\) and use the theory of equivalence of fields introduced in \cite{MC}. In \cite{MC}, the main results on the triviality of \(\mathrm{A_0}(X)\) are proved using the connectedness of classes of fields. We extract from this a criterion (Lemma~\ref{main lemma for triviality}) based solely on the notion of equivalence of fields, which we use to establish the vanishing of \(\mathrm{A_0}(X)\).
\subsection{Special fields \cite[Section 5]{MC}} 
Let \(p\) be a prime number. A field \(F\) is called \(p\)-special if the degree of every finite field extension of \(F\) is a power of \(p\). We say that \(F\) is special if it is \(p\)-special for some prime \(p\). We note that if \(X\) is a scheme over a \(p\)-special field \(F\), then the index of \(X\) is a power of \(p\), and there exists a closed point \(x \in X\) such that \(\deg(x) = \mathrm{ind}(X)\). 
\begin{lemma}\label{p-degree in special field}
Let \(F\) be a \(p\)-special field for some prime \(p\) and \(F^s\) denote its separable closure. Then, for any algebraic extension \(L/F\), there exists a subextension of degree \(p\). Further, for any \(k \geq 1\) with \(p^k \leq [F^s:F]\), there exists an extension \(M/F\) of degree \(p^k\).
\end{lemma}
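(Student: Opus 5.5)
The lemma has two parts, and I would prove them in order. Both reduce to Sylow theory for finite Galois groups. In a $p$-special field the degree of every finite separable extension is a power of $p$, so every finite Galois group over $F$ is a $p$-group.

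\emph{First part.} I read ``algebraic extension $L/F$'' as a nontrivial extension $L \neq F$; for $L=F$ the statement is vacuous. If $L/F$ is not separable, then $\operatorname{char} F = p$. In that case pick $a \in L$ inseparable over $F$. Some power $a^{p^j}$ lies in the separable closure of $F$ in $L$ while $a^{p^{j-1}}$ does not. Then $F(a^{p^{j-1}})$ over $F(a^{p^j})$ is purely inseparable of degree $p$. This case is awkward, because it yields a degree $p$ extension over an intermediate field rather than over $F$. So I would first reduce to the separable closure $L_s$ of $F$ in $L$. If $L_s \neq F$, I work inside $L_s$. If $L_s = F$, then $L/F$ is purely inseparable and nontrivial. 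Take $a \in L \setminus F$ with $a^p \in F$, which exists by taking a suitable $p$-power of any element of $L \setminus F$; then $F(a)$ has degree $p$ over $F$.

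Now suppose $L/F$ is separable and nontrivial. Choose a finite subextension $F \subsetneq L' \subseteq L$, for instance $L' = F(a)$ with $a \in L\setminus F$. Let $N$ be the Galois closure of $L'$ in $F^s$. By the remark above, $G = \Gal(N/F)$ is a finite $p$-group, and $H = \Gal(N/L')$ is a proper subgroup of $G$. In a finite $p$-group every proper subgroup is contained in a subgroup of index $p$: normalizers grow, so a maximal subgroup containing $H$ is normal of index $p$. Let $H \subseteq M \subset G$ with $[G:M]=p$. The fixed field $N^M$ is then contained in $L'$, hence in $L$, and has degree $p$ over $F$.

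\emph{Second part.} Suppose $p^k \le [F^s:F]$. Since $[F^s:F] \ge p^k$, I can find a finite Galois extension $N/F$ inside $F^s$ with $[N:F] \ge p^k$. If $[F^s:F]$ is finite, take $N=F^s$. Otherwise, finite subextensions of unbounded degree exist, and I take Galois closures of these. Then $\Gal(N/F)$ is a $p$-group of order $p^n$ with $n \ge k$. A $p$-group of order $p^n$ has subgroups of every order $p^j$, in particular a subgroup $U$ of index $p^k$. Its fixed field $M = N^U$ has $[M:F]=p^k$.

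I expect no serious obstacle here. The only delicate points are interpreting the first claim correctly, namely that $L \neq F$ is needed, and handling inseparability in characteristic $p$, which is done by the reduction to $L_s$ above.
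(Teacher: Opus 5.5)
Your proof is correct. The first part uses the same idea as the paper: a maximal subgroup containing $H$ has index $p$, and its fixed field is the desired degree $p$ subextension. The routes differ in three ways.

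\emph{Setting of the group theory.} You first pass to a finite subextension $L'=F(a)$, so you only need finite $p$-group theory (normalizers grow, so maximal subgroups are normal of index $p$). The paper instead takes the Galois closure of $L$ itself and cites the fact that maximal subgroups of a pro-$p$ group have index $p$. Your reduction avoids any care about the existence of closed or open maximal subgroups containing $H$ in a profinite group.

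\emph{Inseparable case.} You also handle inseparable $L$ by reducing to the separable closure $L_s$, and, when $L_s=F$, taking $F(a)$ with $a^p\in F$. The paper tacitly excludes this case by taking a Galois closure.

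\emph{Second part.} The paper iterates the degree-$p$ construction to build a tower $F\subset F_1\subset\cdots\subset F_k$. This implicitly uses that each $F_i$ is again $p$-special and that $[F^s:F_i]>1$ for $i<k$. You instead choose one finite Galois $N/F$ with $[N:F]\ge p^k$ and take the fixed field of a subgroup of index $p^k$. Both are valid; yours is a single step and needs no bookkeeping along the tower.

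One wording point: for $L=F$ the first claim is not vacuous but false. The hypothesis $L\neq F$ is genuinely required, and the paper's argument needs it as well, so that $H$ is a proper subgroup and a proper maximal subgroup containing it exists.
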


\begin{proof}
Let \(\tilde{L}\) be the Galois closure of \(L/F\), and set
\(G:=\Gal(\tilde{L}/F)\). The extension \(L/F\) corresponds to the subgroup \(H=\Gal(\tilde{L}/L)\subseteq G\). Let \(N\) be a proper maximal subgroup of \(G\) containing \(H\). Since \(F\) is \(p\)-special, \(G\) is a pro-\(p\) group. As every maximal subgroup of a pro-\(p\) group has index \(p\) \cite[Lemma 22.7.4]{fried2008field}, \(N\) has index \(p\) in \(G\). Hence the fixed field \(\tilde{L}^{N}\) is a degree \(p\) subextension of \(\tilde{L}/F\). Taking \(L=F^s\), the above argument shows that \(F\) admits a degree \(p\) extension \(F_1/F\). Iterating this construction yields a tower \(F \subset F_1 \subset F_2 \subset \cdots \subset F_k\) with \([F_i:F_{i-1}]=p\) for each \(i\), and hence \([F_k:F]=p^k\). This proves the second statement.
\end{proof}

\subsection{Equivalence of fields \cite[Section 6]{MC}}
Let \(F\) be a field and \(X\) be a scheme of finite type over \(F\). Let \(\mathcal{A}(X)\) denote the class of all field extensions of \(E/F\) such that \(X(E)\neq\emptyset\). For each \(a \in F\), let \(v_a\) denote the discrete valuation of the rational function field \(F(t)\) corresponding to the irreducible polynomial \(f(t)=t-a\). Two fields \(L\) and \(L'\) in \(\mathcal{A}(X)\) of the same degree \(n\) over \(F\) are called simply \(X\)-equivalent if there exists a degree \(n\) field extension \(E/F(t)\) with \(E \in \mathcal{A}(X)\), together with two discrete valuations of \(E\) lying above \(v_0\) and \(v_1\), whose residue fields are isomorphic to \(L\) and \(L'\) respectively over \(F\). If \(L\) and \(L'\) have a common subfield \(K/F\) and are simply \(X_K\)-equivalent, then they are simply \(X\)-equivalent.
Two fields \(L\) and \(L'\) in \(\mathcal{A}(X)\) are called \(X\)-equivalent if there exists a chain of fields \(L=L_0,L_1,\cdots,L_r = L'\) in \(\mathcal{A}(X)\) such that \(L_i\) is simply \(\mathcal{A}(X)\)-equivalent to \(L_{i+1}\) for all \(i = 0,1,\ldots,r-1\). The following lemma is extracted from \cite{MC} and serves as the anchor to prove Theorem~\ref{vanishing over local and global}.

\begin{lemma}\label{main lemma for triviality} 
Let \(X\) be a proper scheme of finite type over \(F\) such that \(\mathrm{A_0}(X_L)=0\) for every field \(L \in \mathcal{A}(X)\). Assume that for every special field \(F'\) algebraic over \(F\), any two fields \(L_0,L_1 \in \mathcal{A}(X_{F'})\) of degree \(\mathrm{ind}(X_{F'})\) are \(X_{F'}\)-equivalent. Then \(\mathrm{A_0}(X)=0\).
\end{lemma}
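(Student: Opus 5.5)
The plan follows the Merkurjev--Chernousov strategy: reduce to special fields, and there show that every degree-zero class is generated by differences of closed points whose residue fields are \(X\)-equivalent of minimal degree.

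\textbf{Step 1: reduction to special fields.} Let \(\alpha\in \mathrm{A_0}(X)\). For each prime \(p\), choose a \(p\)-special closure \(F_p\) of \(F\): the fixed field of a pro-\(p\) Sylow subgroup of \(\Gal(F^s/F)\), followed by perfect closure if needed. It is a direct limit of finite extensions of degree prime to \(p\), possibly together with purely inseparable ones. If \(\alpha_{F_p}=0\), then \(\alpha\) already dies over some finite extension of prime-to-\(p\) degree (inseparable steps only contribute powers of \(p\) and can be handled similarly). Restriction--corestriction then shows that \(\alpha\) is killed by an integer prime to \(p\). Since this holds for every \(p\), we get \(\alpha=0\). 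So it suffices to prove \(\mathrm{A_0}(X_{F'})=0\) for every special field \(F'\) algebraic over \(F\). The hypothesis \(\mathrm{A_0}(X_L)=0\) for \(L\in\mathcal{A}(X)\) passes to \(X_{F'}\), because \(\mathcal{A}(X_{F'})\subset\mathcal{A}(X)\).

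\textbf{Step 2: generators over a \(p\)-special field.} Now let \(F\) be \(p\)-special and \(n=\mathrm{ind}(X)\). For any closed point \(x\), the field \(F(x)\) lies in \(\mathcal{A}(X)\), so \(\mathrm{CH_0}(X_{F(x)})\cong\Z\) via degree. Take a closed point \(x_0\) of degree \(n\). By Lemma~\ref{p-degree in special field}, \(F(x)\) contains a subfield \(M\) with \([M:F]=n\)? Not necessarily. Instead we argue as follows. Over \(F(x)\), the class \([x_0]_{F(x)}\) equals \(n\) times a rational point. Pushing forward, \(\deg(x)[x_0]=n\cdot\) (pushforward of a degree-one class), and \(N_{F(x)/F}\) of a rational point equals \([x]\) up to the pullback identification. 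Using \(\mathrm{CH_0}(X_{F(x)})\cong\Z\), the point \(x\) (pulled back) equals \(\deg(x)\) times a rational point, whose pushforward is \([x]\) computed via the canonical \(F(x)\)-point. This yields \([x]=(\deg x/n)\,[x_0]'\) where \([x_0]'\) is the class of some degree-\(n\) point. The upshot is that \(\mathrm{CH_0}(X)\) is generated by classes of closed points of degree exactly \(n\). Concretely, we can filter: \(F(x)\supset\) a subfield chain, and we replace \(x\) by pushforwards of points over intermediate fields of degree \(n\), using Lemma~\ref{p-degree in special field} to find such intermediate fields inside \(F(x)\). So \(\mathrm{A_0}(X)\) is generated by differences \([x]-[x']\) with \(\deg x=\deg x'=n\).

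\textbf{Step 3: equivalence kills differences.} Let \(L_0=F(x)\) and \(L_1=F(x')\). By hypothesis they are \(X\)-equivalent, so by chaining it suffices to treat the simply equivalent case. Given \(E/F(t)\) of degree \(n\) with \(X(E)\neq\emptyset\) and valuations over \(v_0,v_1\) with residue fields \(L_0,L_1\), take the normalization \(C\) of \(\P^1\) in \(E\). An \(E\)-point of \(X\) gives a map \(C\to X\) by properness. The norm of the corresponding zero-cycle relation over \(\P^1\) (fibers over \(0\) and \(1\) are rationally equivalent) gives that the pushforwards of the fibers over \(0\) and \(1\) are equal. These fibers contain points with residue fields \(L_0,L_1\); other components have total degree \(n\) on each side, and we need to match them to \([x],[x']\). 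Here we use that every point of \(X\) with residue field \(L\in\mathcal{A}(X)\) of degree \(n\) has a class determined by \(L\) alone, by \(\mathrm{A_0}(X_L)=0\) and pushforward. Since the degrees sum to \(n=\mathrm{ind}(X)\), the fiber over \(v_0\) must consist of a single point of residue degree \(n\). This forces the relation \([x]=[x']\).

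\textbf{Main obstacle.} I expect the main difficulty in Step 3 to be controlling the fibers: one needs to show that the fiber over \(v_i\) is exactly the point with residue field \(L_i\), with multiplicity one. Minimality of \(n\) as the index gives this, since any other point in the fiber would have degree less than \(n\), which is impossible.
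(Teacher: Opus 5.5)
Your architecture is the same as the paper's: reduce to a $p$-special field, show that $X$-equivalent residue fields of minimal degree give rationally equivalent points, then deduce $\mathrm{A_0}=0$. Step 3 is a correct version of the curve argument the paper cites as \cite[Lemma~6.4]{MC}. Step 2 is where the proof breaks, and it is exactly the step the paper delegates to \cite[Lemma~5.2]{MC}.

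\textbf{Where Step 2 fails.} The projection formula gives $n[x]=\deg(x)[x_0]$ in $\mathrm{CH_0}(X)$, and you then write $[x]=(\deg x/n)[x_0]$. Dividing by $n$ is not allowed in $\mathrm{CH_0}(X)$. Worse, the relation carries no information at all. For $\alpha\in\mathrm{A_0}(X)$ we have $\alpha_{F(x_0)}=0$, because $F(x_0)\in\mathcal{A}(X)$. Hence $n\alpha=\pi_*\pi^*\alpha=0$, so $\mathrm{A_0}(X)$ is already $n$-torsion.

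Your fallback is to factor $x$ through a subfield of $F(x)$ of degree $n$. That needs the subfield to lie in $\mathcal{A}(X)$, which, as you note yourself, need not hold. So the claim that $\mathrm{A_0}(X)$ is generated by differences of degree-$n$ points is unproved.

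Your argument works over the single base field and never uses the equivalence hypothesis over finite extensions of $F'$. That hypothesis is exactly the input this step needs. It is also why the lemma quantifies over every special field algebraic over $F$: finite extensions of a special field are again special.

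\textbf{A working argument.} Fix a closed point $x_0$ of degree $n=\mathrm{ind}(X)=p^b$. Induct on $b$, simultaneously over all finite extensions of the $p$-special field; they inherit both hypotheses.

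1. Let $y$ be a closed point with $L=F(y)$ of degree $p^a$. Choose $M\subset L$ with $[L:M]=n$. This is possible because $L/F$ is a tower of degree-$p$ steps, separable or not.
2. Let $\pi\colon X_M\to X$ be the projection, and let $y'$ be the canonical lift of $y$. Then $y'$ and the cycle $\pi^*[x_0]$ both have degree $n$ over $M$.
3. If $\mathrm{ind}(X_M)<n$, induction gives $\mathrm{A_0}(X_M)=0$, so $[y']=\pi^*[x_0]$.
4. If $\mathrm{ind}(X_M)=n$, every closed point of $X_M$ has degree at least $n$. So $\pi^*[x_0]$ is a single reduced point of degree $n$. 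Your Step 3, applied over $M$, then gives $[y']=\pi^*[x_0]$.
5. In either case, $[y]=\pi_*[y']=\pi_*\pi^*[x_0]=[M:F][x_0]=(\deg y/n)[x_0]$. Hence $\mathrm{CH_0}(X)=\Z[x_0]$ and $\mathrm{A_0}(X)=0$.

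\textbf{Smaller points.}
\begin{itemize}
\item In Step 1, the perfect closure is needed only when $\operatorname{char}F\neq p$. There it contributes degrees prime to $p$, not powers of $p$, and that is what makes restriction--corestriction work.
\item In Step 3, the fiber over $v_0$ is a single reduced point because of the degree count $\sum e_wf_w=n$ with one $f_w=n$. Minimality of the index plays no role.
\item The intermediate fields of an equivalence chain need not be residue fields of closed points. You should work instead with the class $c(L)$, defined as the pushforward of any $L$-point; it is well defined because $\mathrm{A_0}(X_L)=0$.
\end{itemize}
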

\begin{proof} 
Fix a prime \(p\), and let \(F'/F\) be the fixed field of a Sylow \(p\)-subgroup of \(\mathrm{Gal}(F^s/F)\). By assumption, the residue fields of any two points of \(X_{F'}\) of degree \(\mathrm{ind}(X_{F'})\) are \(X_{F'}\)-equivalent. Hence, by \cite[Lemma~6.4]{MC}, any two such points are rationally equivalent in \(\mathrm{CH_0}(X_{F'})\). It follows from \cite[Lemma~5.2]{MC} that \(\mathrm{A_0}(X_{F'})=0\). In particular, \(\mathrm{A_0}(X)\) is \(p\)-torsion free. Since this holds for every prime~\(p\), and \(\mathrm{A_0}(X)\) is a torsion group, we conclude that \(\mathrm{A_0}(X)=0\).
\end{proof}

\subsection{Vanishing results}
We first reduce the vanishing problem to the case of prime-power index, which is valid over any field.
\begin{lemma}\label{reduction to index p CDA}
Let \(A\) be a central simple algebra of index \(n\) over a field \(F\), and let \(D\) be the central division algebra Brauer-equivalent to \(A\). Suppose that \(D \simeq D_1 \otimes D_2 \otimes \cdots \otimes D_k\) is the primary decomposition of \(D\), where \(\operatorname{ind}(D_i)=p_i^{a_i}\) for \(1 \le i \le k\). Assume that \(\mathrm{A_0}\big(\mathrm{SB}_{p_i^{b_i}}(D_i)\big)=0\) for all \(1 \le b_i \le a_i\) and \(1 \le i \le k\). Then \(\mathrm{A_0}\big(\mathrm{SB}_r(A)\big)=0\) for all \(r\).
\end{lemma}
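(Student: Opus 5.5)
We reduce, as in the proof of Theorem~\ref{main theorem}, to a product of prime-power pieces and then kill $\mathrm{A_0}$ of the product by two coprime restriction--corestriction arguments.

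\emph{Reduction.} Fix $r$ and set $d=(r,n)$. By Lemma~\ref{Severi--Brauer flag to generalized Severi--Brauer}, $\mathrm{SB}_r(A)$ is stably birational to $\mathrm{SB}_d(D)$, so it suffices to show $\mathrm{A_0}(\mathrm{SB}_d(D))=0$. Write $d=\prod_i p_i^{b_i}$ with $0\le b_i\le a_i$. Applying Proposition~\ref{coprime degrees} repeatedly to the pairwise coprime factors $D_i$ gives
\[
\mathrm{A_0}(\mathrm{SB}_d(D))\cong \mathrm{A_0}\Bigl(\prod_i X_i\Bigr),\qquad X_i:=\mathrm{SB}_{p_i^{b_i}}(D_i).
\]
For $b_i=a_i$ the variety $X_i$ is a point. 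For $b_i=0$ we take $X_i=\mathrm{SB}(D_i)$; this is the classical Severi--Brauer variety, the case $b_i=0$ in the notation $p_i^{b_i}$, although the hypothesis is stated only for $b_i\ge 1$. Its $\mathrm{A_0}$ vanishes over every field by \cite[Corollary~7.3]{MC}. So we must show that $\mathrm{A_0}$ of a product of varieties $X_i$, each of $p_i$-primary index, vanishes.

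\emph{Induction on $k$.} Suppose $\mathrm{A_0}(Y)=0$ for $Y=\prod_{i<k}X_i$, and consider $Y\times X_k$.
\begin{itemize}
\item Let $L_k$ be a maximal separable splitting field of $D_k$, of degree $p_k^{a_k}$. Then $(X_k)_{L_k}$ is a Grassmannian, so by Lemma~\ref{cellular decomposition} the group $\mathrm{A_0}((Y\times X_k)_{L_k})$ is a quotient of $\mathrm{A_0}(Y_{L_k})$. By restriction--corestriction, $\mathrm{A_0}(Y\times X_k)$ is then $p_k^{a_k}$-torsion.
\item Symmetrically, let $E=L_1\cdots L_{k-1}$, which has degree prime to $p_k$. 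Over $E$ the factor $Y$ is a product of Grassmannians, so $\mathrm{A_0}((Y\times X_k)_E)$ is a quotient of $\mathrm{A_0}((X_k)_E)$. This gives $[E:F]$-torsion.
\end{itemize}
Since the two exponents are coprime, $\mathrm{A_0}(Y\times X_k)=0$.

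\emph{Main obstacle.} Both steps need vanishing after base change: $\mathrm{A_0}(Y_{L_k})=0$ and $\mathrm{A_0}((X_k)_E)=0$. The hypothesis only gives vanishing over $F$. We handle this as follows.
\begin{itemize}
\item \emph{The factor $X_k$ over $E$.} Since $[E:F]$ is prime to $p_k$, $D_k\otimes E$ is still a division algebra of index $p_k^{a_k}$, but the hypothesis is not stated over $E$. Instead, note that $\mathrm{A_0}((X_k)_E)$ is $p_k$-primary torsion, by Theorem~\ref{index of generalized SB} applied over $E$. Restriction--corestriction $\mathrm{cor}\circ\mathrm{res}=[E:F]$ then lets us pass between $F$ and $E$ on $p_k$-primary parts. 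So we run the second bullet $p_k$-primarily: since $\mathrm{A_0}(Y\times X_k)$ is already $p_k^{a_k}$-torsion, it suffices to show that $[E:F]$ kills it.
\item \emph{A cleaner route.} It is simpler to argue prime by prime. $\mathrm{A_0}(\prod X_i)$ is torsion, so it suffices to show its $p_j$-primary part vanishes for each $j$. Base change to $E_j=\prod_{i\ne j}L_i$, whose degree is prime to $p_j$. Then $\mathrm{res}$ is injective on $p_j$-primary parts, and by Lemma~\ref{cellular decomposition} $\mathrm{A_0}((\prod X_i)_{E_j})$ is a quotient of $\mathrm{A_0}((X_j)_{E_j})$. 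This leaves the claim that $\mathrm{A_0}((X_j)_{E_j})$ has trivial $p_j$-part, which we argue by the same coprime-degree principle: $\mathrm{A_0}((X_j)_{E_j})$ is $p_j$-primary, and $\mathrm{cor}\circ\mathrm{res}$ from $F$ is multiplication by the prime-to-$p_j$ integer $[E_j:F]$.
\end{itemize}

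The delicate point is making this last transfer rigorous. The map $\mathrm{res}:\mathrm{A_0}(X_j)\to\mathrm{A_0}((X_j)_{E_j})$ need not be surjective, so the vanishing over $F$ does not transfer upward directly. If this fails, we fall back on interpreting the hypothesis as holding over all finite prime-to-$p_i$ extensions. In the intended application, over local and global fields, it is proved for all such fields anyway.
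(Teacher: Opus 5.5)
Your reduction to $\prod_i \mathrm{SB}_{p_i^{b_i}}(D_i)$ and your prime-by-prime argument are the paper's proof up to bookkeeping. That argument base-changes to $E_j$, the compositum of splitting fields of the $D_i$ with $i\neq j$, then applies the cellular-decomposition lemma and restriction--corestriction; the paper instead shows each $[E_j:F]$ kills the group and uses $\gcd_j [E_j:F]=1$. Your handling of $b_i=a_i$, and of $b_i=0$ via \cite[Corollary~7.3]{MC}, is more careful than the paper's, whose hypothesis only covers $b_i\ge 1$.

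But, as you suspected, this does not prove the statement as written, and your proposed patch does not close the gap. The step actually needed is $\mathrm{A_0}\bigl(\mathrm{SB}_{p_j^{b_j}}(D_j\otimes_F E_j)\bigr)=0$. The identity $\operatorname{cor}\circ\operatorname{res}=[E_j:F]$ lives on $\mathrm{A_0}(X_j)$ over $F$. It only shows that $\operatorname{res}$ is injective on $p_j$-primary torsion, which transports vanishing \emph{down} from $E_j$ to $F$, not up. To go up you would need $\operatorname{res}\circ\operatorname{cor}=[E_j:F]$ on $\mathrm{A_0}((X_j)_{E_j})$, which fails in general (for Galois $E_j/F$ it is the sum of the Galois conjugates). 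From $\mathrm{A_0}(X_j)=0$ you learn only that $\operatorname{cor}$ annihilates $\mathrm{A_0}((X_j)_{E_j})$. Your induction-on-$k$ variant has the defect twice, since it also needs $\mathrm{A_0}(Y_{L_k})=0$ from an inductive hypothesis stated over $F$ only. The prime-by-prime version is preferable because it base-changes only single factors.

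The paper's own proof has precisely this gap. It writes $\bigl(\prod_i\mathrm{SB}_{p_i^{b_i}}(D_i)\bigr)_{E_j}\cong\bigl(\prod_{i\neq j}\mathrm{Gr}\bigr)\times\mathrm{SB}_{p_j^{b_j}}(D_j)$, silently dropping the base change on the $j$-th factor, and then applies the hypothesis over $F$. So the right repair is the one you fall back on: assume $\mathrm{A_0}\bigl(\mathrm{SB}_{p_i^{b_i}}(D_i\otimes_F E)\bigr)=0$ for finite extensions $E/F$ of degree prime to $p_i$. Taking $E=E_i$ suffices, and $D_i\otimes_F E_i$ is still a division algebra of index $p_i^{a_i}$, namely the $p_i$-primary component of $D\otimes_F E_i$. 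With that hypothesis your prime-by-prime argument is complete.

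The strengthening costs nothing in the application to Theorem~\ref{vanishing over local and global}. Finite extensions of local and global fields are again local or global, and Section~\ref{proof of Vanishing theorem} establishes the $p$-primary vanishing over an arbitrary such base field. Whether the lemma holds with the hypothesis only over $F$ is not settled by either argument.
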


\begin{proof}
Fix \(r < \deg(A)\). By Lemma~\ref{Severi--Brauer flag to generalized Severi--Brauer}, \(\mathrm{SB}_r(A)\) is stably birational to \(\mathrm{SB}_{(r,n)}(A)\). Let \((r,n)=\prod_{i=1}^k p_i^{b_i}\). By Proposition~\ref{coprime degrees}, \(\mathrm{SB}_{(r,n)}(A) \sim_{\mathrm{stab}} \prod_{i=1}^k \mathrm{SB}_{p_i^{b_i}}(D_i)\). Since \(\mathrm{A_0}\) is a stable birational invariant, 
\(
\mathrm{A_0}(\mathrm{SB}_r(A)) \cong \mathrm{A_0}\!\big(\prod_{i=1}^k \mathrm{SB}_{p_i^{b_i}}(D_i)\big)\). For each \(i\), let \(L_i/F\) be a maximal separable splitting field of \(D_i\), and for each \(j\), let \(E_j\) be the compositum of the \(L_i\) with \(i\neq j\). Then over \(E_j\), all \(D_i\) with \(i\neq j\) split, so 
\(
\big(\prod_{i=1}^k \mathrm{SB}_{p_i^{b_i}}(D_i)\big)_{E_j}
\cong
\big(\prod_{i\ne j} \mathrm{Gr}(\cdot)\big)\times \mathrm{SB}_{p_j^{b_j}}(D_j).
\)
By Lemma~\ref{cellular decomposition}, 
\(
\mathrm{A_0}\!\big(\big(\prod_{i=1}^k \mathrm{SB}_{p_i^{b_i}}(D_i)\big)_{E_j}\big)
\)
is \(\mathrm{A_0}(\mathrm{SB}_{p_j^{b_j}}(D_j))\)-torsion, hence zero by assumption. By restriction--corestriction, 
\(
\mathrm{A_0}\!\big(\prod_{i=1}^k \mathrm{SB}_{p_i^{b_i}}(D_i)\big)
\)
is annihilated by \([E_j:F]\) for each \(j\). Since \(\gcd\{[E_j:F]\mid 1\le j\le k\}=1\), we obtain 
\(
\mathrm{A_0}\!\big(\prod_{i=1}^k \mathrm{SB}_{p_i^{b_i}}(D_i)\big)=0,
\)
and hence \(\mathrm{A_0}(\mathrm{SB}_r(A))=0\).
\end{proof}

\begin{lemma}\label{p-special connectedness-global}
Let \(F\) be a local or global field, and let \(F'/F\) be a \(p\)-special algebraic extension. Let \(Y = \mathrm{SB}_{p^k}(D)\) be the generalized Severi--Brauer variety associated to a division algebra \(D \in \Br(F')\) of index \(p^m\). Then any two fields \(L_0, L_1 \in \mathcal{A}(Y)\) of degree \(p^{m-k}\) over \(F'\) are \(Y\)-equivalent.
\end{lemma}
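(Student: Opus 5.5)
We argue by induction on \(d=[L_0\cap L_1:F']\), the degree of the intersection, as announced in the overview. Here \(L_0,L_1\subset F'^s\) are fixed embeddings of the two fields. By Lemma~\ref{point,index and rational}, \(L\in\mathcal{A}(Y)\) means \(\mathrm{ind}(D_L)\mid p^k\). By Theorem~\ref{index of generalized SB}, \(\mathrm{ind}(Y)=p^{m-k}\). Hence a field \(L\) of degree \(p^{m-k}\) lies in \(\mathcal{A}(Y)\) exactly when \(\mathrm{ind}(D_L)=p^{k}\), i.e. it reduces the index as much as its degree permits.

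The key tool is the Merkurjev--Chernousov simple equivalence for degree \(p\) extensions: two distinct degree \(p\) extensions \(L,L'\) of a field \(E\) with \(L,L'\in\mathcal{A}(Z)\), where \(Z\) is the relevant Severi--Brauer variety, are simply equivalent provided the field \(E(t)\)-extension interpolating them has the right index. To arrange this we use a third degree \(p\) extension \(K\) supplied by Lemma~\ref{local-global}. The interpolating extension is built (as in \cite[Section 6]{MC}) from primitive elements \(\alpha,\beta\), and its index is controlled as soon as \(D\) already drops in index over the composita.

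Note that \(F'\) is an algebraic extension of a local or global field, so Lemma~\ref{local-global} cannot be applied over \(F'\) directly. We apply it over a finite subextension \(F''\) of \(F'/F\) over which \(D\), \(L_0\), \(L_1\) are all defined, and which is again local or global. Enlarging \(F''\) does not change the index data, and the base change from \(F''\) to \(F'\) preserves degrees of \(p\)-power extensions for large enough \(F''\).

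Base case \(d=p^{m-k}\): then \(L_0=L_1\) and there is nothing to prove.

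Inductive step: suppose \(d<p^{m-k}\) and put \(E=L_0\cap L_1\). By the remark in the definition of equivalence, it suffices to work over \(E\) and show \(L_0,L_1\) are \(Y_E\)-equivalent. Over \(E\) the algebra has index \(p^{m'}=p^m/[E:F']\); this uses the local invariant formula \(\mathrm{ind}(D_L)=\mathrm{ind}(D)/[L:F]\), valid placewise via Corollary~\ref{global index is lcm of local}. So we may assume \(L_0\cap L_1=F'\). By Lemma~\ref{p-degree in special field}, each \(L_i\) contains a degree \(p\) subfield \(M_i\), with \(M_0\neq M_1\). Since \(L_i\) reduces the index maximally, so does \(M_i\), giving \(\mathrm{ind}(D_{M_i})=p^{m-1}\). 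Lemma~\ref{local-global} then gives a degree \(p\) extension \(K\), distinct from both, with \(\mathrm{ind}(D_K)=p^{m-1}\) and \(\mathrm{ind}(D_{KM_i})=p^{m-2}\). Its second part gives, over \(K\), an extension \(N\supset K\) of degree \(p^{m-k}\) over \(F'\) with \(N\in\mathcal{A}(Y)\). Likewise, over \(KM_i\) we obtain \(N_i\supset KM_i\) in \(\mathcal{A}(Y)\).

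We then connect \(L_0\) to \(L_1\) by the chain
\[
L_0\ \sim\ N_0'\ \sim\ N_0\ \sim\ N\ \sim\ N_1\ \sim\ N_1'\ \sim\ L_1,
\]
where \(N_i'\supset M_i\) is an extension containing \(KM_i\). Each consecutive pair shares a subfield of degree \(\geq p\): \(M_i\), \(KM_i\), or \(K\). So by induction, applied over the common subfield with the smaller algebra index, each pair is equivalent.

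The main obstacle is making each link genuinely share a larger common subfield while staying in \(\mathcal{A}(Y)\) at the correct degree. In particular, the first step \(L_0\sim N_0'\) requires an extension of \(M_0\) containing \(KM_0\) that realizes maximal index reduction; Lemma~\ref{local-global} applied over \(M_0\) should provide this. Checking that the induction is well-founded, by inducting on \(p^{m-k}/d\) over all base fields simultaneously, is the point to verify carefully.
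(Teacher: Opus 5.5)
There is a genuine gap: your argument never produces a single simple equivalence. Every link in your chain is justified by ``the two fields share a subfield of degree \(\ge p\), so induct over it'', and the only case you settle outright is \(L_0=L_1\).

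This fails exactly when \([L_i:L_0\cap L_1]=p\). After passing to \(L_0\cap L_1\), the fields \(L_0,L_1\) have degree \(p\) over the new base. The fields \(N_i\supset KM_i\) you need would then have to have degree \(p\), although \(KM_i\) already has degree \(p^2\). More fundamentally, two distinct degree-\(p\) extensions of a field meet only in that field. So no chain of such fields can have consecutive members sharing a larger subfield, and your induction on \(p^{m-k}/d\) can never get from \(p^{m-k}/d=p\) down to \(L_0=L_1\).

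What is missing is a direct proof of the following: if \(K\) is a field with \(\mathrm{ind}(D_K)=p^{k+1}\), then two distinct degree-\(p\) extensions \(L_0,L_1\) of \(K\) with \(\mathrm{ind}(D_{L_i})=p^k\) are \(Y_K\)-equivalent. Concretely, you must exhibit a degree-\(p\) extension of \(K(t)\) in \(\mathcal{A}(Y_K)\) with the right residue fields. Your ``key tool'' paragraph names this but neither proves it nor uses it.

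In the paper this is the base case \(P(1)\). It takes a third field \(K'=K(\gamma)\) from Lemma~\ref{local-global} with \(\mathrm{ind}(D_{K'})=p^k\) and \(\mathrm{ind}(D_{K'L_i})=p^{k-1}\). It then sets \(E=K(t)[x]/((1-t)g+ts)\), giving \(L_0\sim K'\sim L_1\). Given that, your inductive step is the paper's chain \(L_0\sim M_0\sim M_1\sim L_1\), with your \(N_i\) playing the role of its \(M_i\). Your links through \(N\) and \(N_i'\) are superfluous, since \(N_0\) and \(N_1\) already contain your third field.

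Be warned that membership of the interpolated field in \(\mathcal{A}(Y_K)\) is the entire difficulty. Contrary to your sketch, it does not follow from \(D\) dropping index over the composita. Here is an example, in characteristic \(\neq 2\):
\begin{itemize}
\item Take \(p=2\), \(k=1\) and \(\mathrm{ind}(D_K)=4\); since exponent equals index in this setting, \(\exp(D_K)=4\).
\item Take \(L_0=K(\sqrt a)\), \(K'=K(\sqrt c)\), \(\alpha=\sqrt a\) and \(\gamma=\sqrt c\). Interpolation gives \(E=K(t)(\sqrt{a+t(c-a)})\).
\item Let \(q\) be the ternary form of squares of pure quaternions in the quaternion division algebra \(Q\) with \([Q]=2[D_K]\). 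By Amer--Brumer, \(q\) does not represent \(a+t(c-a)\) over \(K(t)\): a common zero of \(q\perp\langle -a\rangle\) and \((a-c)z^2\) would be a zero of the anisotropic form \(q\).
\item Hence \(E\) does not split \(Q\), so \(\mathrm{ind}(D_E)=4\) and \(E\notin\mathcal{A}(Y_K)\).
\end{itemize}
In particular \(E\not\subset M_0(t)\), which is tacitly assumed by the paper's own justification ``\([M_0(t):E]=p^k\)''. So this step needs a real argument, namely a careful choice of \(E\), not just a good \(K'\).

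A smaller point: \(\mathrm{ind}(D_L)=\mathrm{ind}(D)/[L:F]\) is false over global fields in general. You do not need it, though. The equality \(\mathrm{ind}(D_{L_0\cap L_1})=p^m/[L_0\cap L_1:F']\) already follows from \(\mathrm{ind}(D)\mid[L_0\cap L_1:F']\,\mathrm{ind}(D_{L_0\cap L_1})\) together with \(\mathrm{ind}(D_{L_0\cap L_1})\mid p^k[L_i:L_0\cap L_1]\).
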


\begin{proof}
Set \(n = m-k\). We proceed by induction on \([L_0 \cap L_1 : F']\). For \(1 \le \ell \le n\), let \(P(\ell)\) denote the statement that \(L_0\) and \(L_1\) are \(Y\)-equivalent whenever \([L_0 \cap L_1 : F'] = p^{n-\ell}\). First assume \([L_0 \cap L_1 : F'] = p^{n-1}\), and let \(K = L_0 \cap L_1\). Then \([L_i:K]=p\) for \(i=0,1\). Let \(L_0 = K(\alpha)\) and \(L_1 = K(\beta)\) for some separable \(\alpha,\beta\) over \(K\). Since \(L_0, L_1 \in \mathcal{A}(Y)\) have degree \(p^n\) over \(F'\), we have \(\mathrm{ind}(D_{L_0})=\mathrm{ind}(D_{L_1})=p^k\), and hence \(\mathrm{ind}(D_K)=p^{k+1}\). \\
As \(K/F\) is algebraic, there exists a finite extension \(\tilde{F}/F\) and \(\tilde{D} \in \Br(\tilde{F})\) such that \(\tilde{D} \otimes_{\tilde{F}} K \simeq D_K\). Without loss of generality, we may assume that the minimal polynomials of \(\alpha,\beta\) are defined over \(\tilde{F}\). Thus we have \(L_0 = K\tilde{F}(\alpha)\) and \(L_1 = K\tilde{F}(\beta)\). \\ Applying Lemma~\ref{local-global} to \(\tilde{D}\) and the extensions \(\tilde{F}(\alpha)/\tilde{F}\), \(\tilde{F}(\beta)/\tilde{F}\), we obtain a degree \(p\) extension \(\tilde{K}/\tilde{F}\) such that \(\mathrm{ind}(\tilde{D}_{\tilde{K}})=p^k\) and \(\mathrm{ind}(\tilde{D}_{\tilde{K}\tilde{F}(\alpha)})=\mathrm{ind}(\tilde{D}_{\tilde{K}\tilde{F}(\beta)})=p^{k-1}\). Setting \(K' = F\tilde{K}\), we get \(\mathrm{ind}(D_{K'})=p^k\) and \(\mathrm{ind}(D_{K' L_i})=p^{k-1}\) for \(i=0,1\). Applying second part of Lemma~\ref{local-global}, we see that there exist extensions \(M_0/K' L_0\) and \(M_1/K' L_1\) of degree \(p^{k-1}\) such that \(\mathrm{ind}(D_{M_0})=\mathrm{ind}(D_{M_1})=1\).\\
Let \(K'=K(\gamma)\), and consider the interpolation polynomial \(f(t,x)=(1-t)g(x)+t\,s(x)\in K[t,x]\), where \(g,s\) are the minimal polynomials of \(\alpha,\gamma\) respectively. Let \(E=K(t)[x]/(f)\). Then \(E/K(t)\) is a degree \(p\) extension with specializations \(L_0\) and \(K'\) at \(t=0,1\), respectively. Since \([M_0(t):E]=p^k\), and \(M_0(t)\) splits \(D\), we have \(\mathrm{ind}(D_E)=p^k\), so \(E \in \mathcal{A}(Y_K)\). Hence \(L_0 \sim_{Y_K} K'\), and therefore \(L_0 \sim_Y K'\). Similarly, \(L_1 \sim_Y K'\), and thus \(L_0 \sim_Y L_1\). This proves \(P(1)\).\\
Now assume \(P(s)\) holds for all \(1 \le s \le \ell\), and suppose \([L_0 \cap L_1 : F'] = p^{n-\ell-1}\). Let \(K=L_0 \cap L_1\), and let \(L_0'/K\), \(L_1'/K\) be degree \(p\) subextensions of \(L_0/K\), \(L_1/K\), respectively, which exists by Lemma~\ref{p-degree in special field}. Then \(\mathrm{ind}(D_{L_0'})=\mathrm{ind}(D_{L_1'})=p^{k+\ell}\).\\ By Lemma~\ref{local-global}, there exists a degree \(p\) extension \(K'/K\) such that \(\mathrm{ind}(D_{K'})=p^{k+\ell}\) and \(\mathrm{ind}(D_{K' L_0'})=\mathrm{ind}(D_{K' L_1'})=p^{k+\ell-1}\). Applying Lemma~\ref{local-global} again, we obtain extensions \(M_0/K' L_0'\), \(M_1/K' L_1'\) of degree \(p^{\ell-1}\) such that \(\mathrm{ind}(D_{M_0})=\mathrm{ind}(D_{M_1})=p^k\). Thus \(M_0, M_1 \in \mathcal{A}(Y)\), and both have degree \(p^n\) over \(F'\). By construction, \([L_0 \cap M_0 : F'] = [L_0':F']= p^{n-\ell}\), so \(L_0 \sim_Y M_0\) by \(P(\ell)\). Similarly, \(L_1 \sim_Y M_1\). Moreover, \(K' \subset M_0 \cap M_1\), so \([M_0 \cap M_1 : F'] \ge p^{n-\ell}\), and hence \(M_0 \sim_Y M_1\) by the induction hypothesis. By transitivity, \(L_0 \sim_Y L_1\), completing the induction.
\end{proof}

Now we prove our main vanishing result.

\begin{proof}[Proof of Theorem~\ref{vanishing over local and global}]
By Lemma~\ref{Severi--Brauer flag to generalized Severi--Brauer}, \(X\) is stably birational to \(\mathrm{SB}_d(D')\) where \(D'\) is the central division algebra over \(F\) Brauer-equivalent to \(A\) and \(d =gcd(n_1,n_2,\dots,n)\). Since \(\mathrm{A_0}\) is stably birational invariant, it is enough to prove that \(\mathrm{A_0}(\mathrm{SB}_d(D'))\) is trivial. By Lemma~\ref{reduction to index p CDA}, it suffices to show that \(\mathrm{A_0}\bigl(\mathrm{SB}_{p^k}(D)\bigr)=0\), where \(D\) is the \(p\)-primary component of \(D'\), say of index \(p^m\). Let \(Y = \mathrm{SB}_{p^k}(D)\). By Theorem~\ref{main theorem}, the group \(\mathrm{A_0}(Y)\) is \(q\)-torsion free for every prime \(q \neq p\). Let \(F'/F\) be the minimal \(p\)-special extension. Then, by Theorem~\ref{index of generalized SB}, \(\mathrm{ind}(Y)=\mathrm{ind}(D)/{p^k}=p^{m-k}\). By Lemma~\ref{p-special connectedness-global}, any two fields \(L_0, L_1 \in \mathcal{A}(Y_{F'})\) of degree \(p^{m-k}\) over \(F'\) are \(Y_{F'}\)-equivalent. By Lemma~\ref{main lemma for triviality}, \(\mathrm{A_0}(Y_{F'})\) is trivial. It follows that \(\mathrm{A_0}(Y)\) is \(p\)-torsion free, hence trivial.
\end{proof}

\begin{remark}
In \cite{CTCONJ}, Colliot--Th\'{e}l\`ene conjectured that, for any geometrically rationally connected variety \(X\) over a \(p\)-adic field, the group \(\mathrm{A_0}(X)\) is finite. Theorem~\ref{vanishing over local and global} provides new classes of varieties for which the conjecture holds.
\end{remark}

\bibliographystyle{plain}
\bibliography{ref}

\end{document}